\colorlet{myred}{red!10}
\colorlet{mycyan}{cyan!10}
\def\DATE{\today}
\def\MZ{{M\bbZ}}
\def\S{{\mathbb{S}}}
\def\bbZ{{\mathbb Z}}
\def\oC{{\mathcal{C}}}
\def\oP{{\mathcal{P}}}
\def\oQ{{\mathcal{Q}}}
\def\oS{{\mathcal{S}}}
\def\oA{{\mathcal{A}}}
\def\oK{{\mathcal{K}}}
\def\bbk{{\mathbb{K}}}
\def\W{{\mathcal{W}}}
\def\A{{\mathcal{A}}}
\def\Lat{{\mathtt{Lat}}}
\def\LatOper{{\mathtt{LatOper}}}
\def\Poly{{\mathtt{Poly}}}
\def\cPoly{{\mathtt{cPoly}}}
\def\Sets{{\mathtt{Sets}}}
\def\rada#1#2{#1,\ldots,#2}
\def\Rada#1#2#3{{#1}_{#2},\ldots,{#1}_{#3}}
\def\Free{{\mathbb F}}
\def\L{{\mathbb L}}
\def\Inv{{\mathbb F}^{inv}(\{a,b\})}
\def\inv{{\text{inv}}}
\def\id{{\mathbbm 1}}
\def\zero{\mathbf{0}}
\def\one{\mathbf{1}}
\def\lowprod{{%
    \setbox0\hbox{$\bigcircle$}%
    \rlap{\hbox to \wd0{\hss$\vee$\hss}}\box0
}}
\def\upprod{{%
    \setbox0\hbox{$\bigcircle$}%
    \rlap{\hbox to \wd0{\hss$\wedge$\hss}}\box0
}}
\def\luprod{{%
    \setbox0\hbox{$\bigcircle$}%
    \rlap{\hbox to \wd0{\hss$\wedge$\hss}}\box0
}}
\theoremstyle{definition}
\newtheorem{definition}{Definition}[section]
\theoremstyle{plain}
{
\newtheorem{lemma}{Lemma}[section]
\newtheorem{proposition}{Proposition}[section]
\newtheorem{theorem}{Theorem}[section]
}
\theoremstyle{definition}
{
\newtheorem{example}{Example}[section]
\newtheorem{corollary}{Corollary}[section]
\newtheorem{remark}{Remark}[section]
}
\newcommand{\leqnomode}{\tagsleft@true\let\veqno\@@leqno}
\newcommand{\reqnomode}{\tagsleft@false\let\veqno\@@eqno}
\newcommand{\addresseshere}{%
  \enddoc@text\let\enddoc@text\relax
}
\author[D. Bashkirov]{Denis Bashkirov} \email{bashkirov@math.cas.cz}
\address{Czech Academy of Sciences, Institute of Mathematics, {\v Z}itn{\'a} 25,
         115 67 Prague 1, Czech Republic}
\title{Lattice operads and operad filtrations}
\begin{document}
\maketitle
\begin{abstract}
We explore the concept of filtrations of operads using lattice-valued operads as indexing objects. The framework generalizes the familiar $\mathbb{Z}$-indexed filtrations of associative algebras and operads, while also naturally extending to a broader range of examples. A key feature of lattice operads is the distributivity of partial compositions with respect to meets and joins. We show that several well-known combinatorial lattices, such as Tamari lattices, form operads exhibiting this property. Further examples include the operad of integer partitions supported on Young's lattice, and operads of integer compositions of types $A$, $B$, and $D$, which we relate to operads associated with families of regular polytopes. We discuss the partial compatibility of the weak order on the symmetric group with the structure of the permutations operad.
\end{abstract}
\section{Introduction}
Filtrations and gradings, being a standard tool for studying structural properties of associative algebras, rings and modules, naturally extend to the setting of operads. While most examples and applications of operad filtrations found in the literature primarily concern filtrations indexed by integers, this paper aims to explore a broader notion of filtration, defined by more flexible indexing objects, such as poset-valued or, more specifically, lattice-valued operads. These indexing objects are intended to align the natural partial order on the set of all linear subspaces of each component 
$\oP(n)$ of a given operad 
$\oP$
 with the operadic compositions within 
$\oP$, providing a finer perspective on the structure of operads.

This construction encompasses, in particular, the standard case of $\bbZ$-indexed filtrations, as well as filtrations by the multiindex operad $M\bbZ$, as used in \cite{superbig} to study algebras of multilinear differential operators. The concept of filtration by an ordered operad, along with applications related to distributive laws for Koszul operads, was originally developed by A. Khoroshkin and V. Dotsenko \cite{khoroshkin, dotsenko}; see example \ref{InvOp} for a discussion of their \textit{operad of inversions}.


The paper is organized as follows. In section \ref{Prelims} we provide some background on different symmetric monoidal structures in the category of polytopes and affine mappings and in the category of lattices and lattice homomorphisms, and recall the constructions of word operads and $M$-associative operads. In section \ref{LatOps}, we introduce the notion of a lattice operad and present some basic results concerning formal properties thereof. A characteristic feature of lattice operads is a certain distributivity property with respect to joins and meets that holds for partial compositions:
 \begin{align*}
   (p\wedge r) \circ_i (q\wedge s) &= (p\circ_i q)\wedge (r\circ_i s)\\
   (p\vee r) \circ_i (q\vee s) &= (p\circ_i q)\vee (r\circ_i s)
\end{align*}

In sections \ref{LatOpEx} and \ref{LatOpsPolys} we present examples of lattice operads.
These include, in particular, an operad of integer partitions supported on the Young's lattice and operads of integer compositions, where, besides ordinary integer compositions, we consider 2-colored and restricted 2-colored compositions as analogues of compositions in types $B$ and $D$.
While it is well-known that Tamari lattices emerge naturally in the operadic context, for instance, in the form of the $1$-skeletons of associahedra, we show that they in fact assemble to a lattice operad; cf. example \ref{TamariOp}.
We discuss partial compatibility of the non-$\Sigma$ operad of permutations with the weak Bruhat order.
\newpage
We would note that there are lattice operads of combinatorial nature associated to the reflection groups of classical types via the corresponding families of regular polytopes and set partitions. 
\begin{center}
\begin{tabular}{c|c|m{3cm}|m{3cm}|m{2.7cm}|m{2cm}|}
\hline
Type & Polytopes & Monoidal \mbox{product} on polytopes & 
Monoidal \mbox{product} on lattices & Operad of & Symmetry\\
\hline
\multirow{3}{*}{A} &
\multirow{3}{*}{Simplicies}
 & Cartesian $\times$ & Lower truncated $\dtimes$ & Power sets & $\S_n$\\
 \cline{3-6}
 & &  Join $*$ & Cartesian $\times$ & Integer compositions & $\bbZ_2\times\bbZ_2$\\
 \hline
 \multirow{3}{*}{B} &
 \multirow{3}{*}{Hypercubes} &
 \multirow{3}{*}{Cartesian $\times$} &
 \multirow{3}{*}{Lower truncated $\dtimes$} & 
 Ternary words & $\mathbb{B}_n$\\
 \cline{5-6} 
 & & & & 2-colored integer compositions & 
 $\bbZ_2\times\bbZ_2$ \\ 
 \cline{1-6}
 C & Hyperoctahedra & Direct sum $\oplus$ & Upper truncated $\utimes$ &
 \multicolumn{2}{|c|}{Same as in type $B$} \\ 
 \hline
 D & & &  Lower truncated $\dtimes$ & 2-colored restricted integer compositions  & $\bbZ_2$ \\
 \hline
 I$_2$ & Regular polygons & Disjoint union $\sqcup$ & Disjoint union $\sqcup$ & Totally cyclically ordered sets & $\mathbbm{D}\text{ih}_n$ \\
 \hline
\end{tabular}
\end{center}

The notion of a filtration indexed by a lattice operad is discussed in section \ref{filtrations}. Among some basic technical results, we establish the functorial properties of this construction (Theorems \ref{FiltFuncL} and \ref{FiltPush}) and show (Proposition \ref{reprprop}) that the set-valued functor of all filtrations of a given linear operad is representable. We would note that some of the results presented in sections \ref{LatOps} and \ref{filtrations} can be generalized to the case of semilattice-valued and, more generally, poset-valued operads.

As apparent from the case of associative algebras and modules, for practical reasons it is worth to single out filtrations of a particularly amenable structure, namely, the ones that admit a characterization in terms of a certain generating set (the \textit{standard filtrations}), as well as filtrations with some additional compositional constraints (such as the \textit{$D$-filtrations}). As the organizing principle here, for a given linear operad $\oP$ and an indexing lattice operad $L$, filtrations with properties of interest are introduced via closure operators on the complete lattice of all subspaces of $\oP$ (done arity-wise) indexed by $L$. We present some relevant examples in section \ref{extra}. In the remaining part we discuss the notion of gradings and the associated graded for a filtered operad. We conclude by an outlook on possible applications related to lattice paths combinatorics.


\section{Preliminaries}
\label{Prelims}

\subsection{Symmetric monoidal structures on the category of lattices.}
\label{CatLat}
Let $\Lat$ be the category of lattices and lattice homomorphisms. 
There are four symmetric monoidal structures on $\Lat$ that will be of relevance to us. Those are
\begin{itemize}
\item 
the standard cartesian product $P\times Q$ of $P$ and $Q$ as posets;
\item 
the \textit{lower truncated} (or the \textit{diamond} \cite{fox}) product 
$P\dtimes Q := (P-\{\zero\})\times (Q-\{\zero\}) \cup \{\zero\}$;
\item 
the \textit{upper truncated} product 
$P\utimes Q := (P-\{\one\})\times (Q-\{\one\}) \cup \{\one\}$;
\item 
the \textit{disjoint union} 
$P \sqcup Q := (P-\{\zero, \one\})\sqcup (Q-\{\zero, \one\}) \cup \{\zero, \one\}$.
\end{itemize}
Here and further on in the text, the symbols $\zero$ and $\one$ denote, respectively, the bottom and the top elements of a lattice under consideration, allowing the trivial case of $\zero=\one$. 

At the level of objects, the lower and upper truncated products were introduced by G.Kalai in \cite{kalai}. For the sake of completeness, we present some details concerning the categorical aspects thereof. Specifically, for the lower truncated product, the associators $\alpha_{P,Q,R}: (P\dtimes Q)\dtimes R \to P\dtimes ( Q\dtimes R)$ are defined by
\[
\alpha_{P,Q,R}(t) = 
\begin{cases}
(p, (q, r)),\quad &t=((p, q), r) \in ((P-\{\zero\})\times (Q-\{\zero\}))\times (R-\{\zero\}) \\
\zero,\quad &t=\zero
\end{cases}.
\]
The unit object for this product is the unique two-element lattice $\mathcal{B}_1=\{\zero, \one\}$. 
The left and right unitors are the evident isomorphisms 
\begin{align*}
{\lambda: \mathcal{B}_1 \dtimes P = \{\mathbf{1}\}\times (P-\{\zero\})\cup \{\zero\} \overset{\sim}{\longrightarrow} P}\\
{\rho: P\dtimes \mathcal{B}_1 = (P-\{\zero\}) \times \{\one\}\cup \{\zero\} \overset{\sim}{\longrightarrow} P}
\end{align*}
respectively. The symmetric structure is given by permuting the cartesian factors.

The standard cartesian monoidal structure and the lower truncated product on $\Lat$ are related by means of the following 
\begin{lemma}
\label{LaxMono}
There is a lax monoidal functor $F: (\Lat, \dtimes) \to (\Lat, \times)$ which acts identically on the objects and morphisms of $\Lat$. 
The coherence maps for $F$ are given by $\epsilon: \{\one\} \to \mathcal{B}_1$, where $\epsilon(\one):=\one$, for the units, and by
\begin{align*}
\mu_{P,Q}&: P\times Q \to P\dtimes Q \\
\mu_{P,Q}((p, q))&:= 
\begin{cases}
(p, q),\quad &p\neq \zero, q\neq \zero\\
\zero, \quad &\text{otherwise} 
\end{cases}
\end{align*}
for all $P, Q\in \Lat$.
\end{lemma}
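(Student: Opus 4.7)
The plan is to verify directly the three defining conditions of a (symmetric) lax monoidal functor: (a) that $\epsilon$ and each $\mu_{P,Q}$ are morphisms in $\Lat$, (b) that $\mu$ is natural in its two arguments, and (c) that the pentagon, triangle, and hexagon coherence axioms hold. Throughout, the guiding picture is that $\mu_{P,Q}$ collapses the complement of $(P-\{\zero\})\times(Q-\{\zero\})$ inside $P\times Q$ to the added bottom $\zero$ of $P\dtimes Q$ and is the identity elsewhere.

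For (a), I would run a short case analysis according to the zero/non-zero pattern of the coordinates: on the non-collapsed region $\mu_{P,Q}$ is literally the identity, so it transports the componentwise lattice operations of $P\times Q$ to those of $P\dtimes Q$; every element of the collapsed region is sent to $\zero$, and meets or joins involving $\zero$ on either side behave compatibly because $\zero$ is the bottom in both lattices. That $\epsilon$ is a lattice morphism is immediate, $\{\one\}$ being the terminal lattice. Naturality of $\mu$ in each argument is then automatic: any lattice homomorphism $f\colon P\to P'$ preserves $\zero$, so $(f\dtimes g)\circ\mu_{P,Q}$ and $\mu_{P',Q'}\circ(f\times g)$ agree on elements with no $\zero$-coordinate (where both are just $f\times g$) and both send any element with a $\zero$-coordinate to $\zero$.

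The main content is the associativity coherence on $(P\times Q)\times R\to P\dtimes(Q\dtimes R)$, comparing the two composites obtained by multiplying first on the left or first on the right and then transporting through the associators of $\times$ and $\dtimes$. On the locus where $p,q,r$ are all non-$\zero$ the explicit form of $\alpha_{P,Q,R}$ shows that both composites produce $(p,(q,r))$; in any other case at least one application of $\mu$ kills its input to $\zero$, and this propagates through $\alpha_{P,Q,R}$, which is itself defined to send any non-trivial tuple containing $\zero$ to $\zero$. The unit triangle and the hexagon for the symmetric structure (inherited from permuting cartesian factors) then follow by the same method with fewer cases.

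The main obstacle is not conceptual but bookkeeping: one has to chase the various $\zero$-patterns through the associator and the two coherence squares. Once the pentagon is laid out, the remaining axioms reduce essentially to inspection.
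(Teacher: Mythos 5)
Your handling of the coherence diagrams is essentially the paper's own argument: the paper verifies exactly the associativity hexagon and the two unit squares, splitting into the case where all coordinates are non-$\zero$ (both composites land on $(p,(q,r))$) and the case where some coordinate is $\zero$ (everything collapses to $\zero$). A small terminological point: what you call the ``pentagon'' is the associativity coherence hexagon for the lax structure maps; the pentagon and triangle axioms belong to the monoidal categories themselves and require no verification here. On this part your proposal and the paper agree.

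The problem is your step (a). You claim $\mu_{P,Q}$ is a lattice homomorphism because ``meets or joins involving $\zero$ on either side behave compatibly since $\zero$ is the bottom in both lattices.'' For meets this is correct, but $\mu_{P,Q}$ does \emph{not} preserve joins in general. Take $P=Q=\{\zero<a<\one\}$, $x=(\zero,\one)$ and $y=(a,a)$ in $P\times Q$. Then $x\vee y=(a,\one)$, so $\mu_{P,Q}(x\vee y)=(a,\one)$, whereas $\mu_{P,Q}(x)\vee\mu_{P,Q}(y)=\zero\vee(a,a)=(a,a)$, and $(a,\one)\neq(a,a)$. Collapsing $x$ to the bottom of $P\dtimes Q$ discards its non-$\zero$ coordinate $\one$, which still contributes to the join computed upstairs in $P\times Q$. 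A similar caveat undermines your naturality argument: a lattice homomorphism (preserving binary meets and joins) need not send bottom to bottom, so ``$f$ preserves $\zero$'' is an additional hypothesis, not automatic. The paper's proof never addresses whether $\mu_{P,Q}$ is a morphism of $\Lat$ at all --- it only chases the two coherence diagrams --- so your attempt is more ambitious in scope, but the extra step as you argue it is false; to make it work one would have to weaken the claim (e.g.\ to $\mu_{P,Q}$ being monotone and meet-preserving) or change the ambient category, rather than push through the case analysis you describe.
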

\begin{proof}
Verifying commutativity of the hexagon diagram
\[
\begin{tikzcd}[column sep={1.5cm,between origins}, row sep={1.2cm,between origins}]
    & (P\times Q)\times R \arrow[rr, swap, "\sim"'] \arrow[ld, "\mu_{P,Q}\times id"'] && P\times (Q\times R) \arrow[rd, "id \times \mu_{Q,R}"] &  \\
    (P\dtimes Q)\times R \arrow[rd, "\mu_{P\dtimes Q, R}"']&  &&  & P\times (Q\dtimes R) \arrow[ld, swap, "\mu_{P,Q\dtimes R}"']\\
    & (P\dtimes Q)\dtimes R \arrow[rr, "\alpha_{P,Q,R}"] && P\dtimes (Q\dtimes R)  & 
\end{tikzcd}
\]
amounts to tracing it separately for $((p, q), r)\in (P\times Q)\times R$ with $p,q,r\neq \zero$ and for the remaining case when at least one of the entries $p, q, r$ is $\zero$. In the former case, both paths lead to $(p,(q,r))$, while in the latter one the resulting value is $\zero$.

Verifying the coherence conditions for the units
\[
\begin{tikzcd}
\{\one\}\times P \arrow[r, "\epsilon\times id"] \arrow[d, swap, "\sim"] & \mathcal{B}_1\times P \arrow[d, "\mu_{\mathcal{B}_1, P}"]\\
P & \mathcal{B}_1\dtimes P \arrow[l, "\lambda"]
\end{tikzcd}
\begin{tikzcd}
P \times \{\one\} \arrow[r, "id\times\epsilon"] \arrow[d, swap, "\sim"] & P \times \mathcal{B}_1\arrow[d, "\mu_{P, \mathcal{B}_1}"]\\
P & P\dtimes \mathcal{B}_1 \arrow[l, "\rho"]
\end{tikzcd}
\]
is equally straightforward and amounts to chasing the diagrams for $p\in P-\{\zero\}$ and for $p=\zero$.
\end{proof}
An analogous result holds for the category $\Lat$ taken with the upper truncated product $\utimes$. For the disjoint union, the associators are inherited from the category of sets with the disjoint union as the symmetric monoidal structure. The unit object in this case is the trivial single-element lattice $\{\one\}$. An analogue of Lemma~\ref{LaxMono} can be established upon considering 
\begin{align*}
\mu_{P,Q}&: P\times Q \to P\sqcup Q \\
\mu_{P,Q}((p, q))&:= 
\begin{cases}
p,\quad &p\neq \zero, q=\one\\
q,\quad &q\neq \zero, p=\one\\
\zero,\quad &\textit{otherwise}
\end{cases}
\end{align*}
and $\epsilon: \{\one\} \overset{\sim}{\longrightarrow} \{\one\}$ as the coherence maps.

\subsection{Categories of polytopes.}
\label{PolyCat}
A few examples of operads in $\Lat$ will be constructed by means of taking face lattices of certain convex polytopes, cf. section ~\ref{LatOpsPolys}. That will require using a proper categorical and symmetric monoidal structure for polytopes in each particular case. We list the options relevant for us below.

\begin{enumerate}
\item 
Let $\Poly$ denote the category of convex polytopes and affine maps. There is a symmetric monoidal structure on $\Poly$ given by the cartesian product of polytopes $P\times Q$.
\item 
Let $\Poly$ be as above, and $P\subset \mathbb{R}^n$, $Q\subset \mathbb{R}^m$ be polytopes of dimensions $n$ and $m$ respectively. The \emph{join product} $P*Q$, defined as the convex hull of $\{(p, 0, 0)| p\in P\}\cup\{(0, q, 1)| q\in Q\}\subset \mathbb{R}^{n+m+1}$, is a symmetric monoidal structure on $\Poly$.
\item 
Let $\Poly$ be as above and $\Poly_0$ be its subcategory consisting of all polytopes $P$ such that for a $n$-dimensional $P$, the origin 
$0\in \mathbb{R}^n$ is contained in its relative interior. 
The subcategory $\Poly_0$ carries a symmetric monoidal structure given by the \textit{direct sum} $P\oplus Q$ defined as the convex hull of~${\{(p, 0)| p\in P\}\cup\{(0, q)| q\in Q\}\subset \mathbb{R}^{n+m}}$ for~${P \subset \mathbb{R}^n, Q \subset \mathbb{R}^m}$.
\item 
Following \cite[Definition 7]{diagonal1}, let $\cPoly$ be the category, whose objects are convex polytopes and morphisms are continuous maps $f: P \to Q$ subject to the following condition: there exists a polytopal subcomplex $\mathcal{D}$ of $Q$ such that $f$ sends $P$ homeomorphically to the underlying set $|\mathcal{D}|$ and $f^{-1}(\mathcal{D})$ induces a polytopal subdivision of $P$. The symmetric monoidal structure on $\cPoly$ is given by the cartesian product of polytopes.
\end{enumerate}

\subsection{Word operads and $M$-associative operads.} 
We will make use of two particularly simple constructions of operads.
First, let $(\oC, \times)$ be a concrete cartesian symmetric monoidal category, ${\mathtt{Oper}}_\oC$ be the category of symmetric unital operads in $\oC$ and ${\mathtt{Mon}}_\oC$ be the category of monoids in $\oC$. There is a functor $\W: {\mathtt{Mon}}_\oC \to {\mathtt{Oper}}_\oC$
that sends a monoid $M$ to the symmetric \emph{word operad} $\W M$, where for all ~$n\geq 1$, $\W M(n):=M^{\times n}$ with the natural $\S_n$-action by permutation of the factors. 
The partial compositions~$\circ_i: \W M(m) \times \W M(n) \to \W M(n+m-1)$ are given by 
\begin{align*}
(p_1,\dots, p_m)\circ_i (q_1,\dots, q_n):=
(p_1,\dots, p_{i-1}, p_i\cdot q_1, \dots, p_i\cdot q_n, p_{i+1},\dots, p_m).
\end{align*}
Such an operad $\W M$ is unital quadratic with $M=\W M(1)$ and $(1,1)\in \W M(2)$ being generators 
in arities $1$ and $2$ respectively. We note that the same construction applies when $M$ is a semigroup, although in this case $\W M$ may not be quadratic. 
Despite its apparent simplicity, the structure of a word operad turns out to be quite versatile. For numerous examples of combinatorial operads arising as suboperads of word operads the reader is referred to \cite{giraudo}. 

Now, let $(\oC, \otimes)$ be a symmetric monoidal category and $M$ be an arbitrary object of $\oC$. Then the  \emph{$M$-associative} operad $\A M$ \cite{dotsenko-shadrin-vallette}, or the \emph{$M$-interstice operad} as per \cite{combe-giraudo}, is the non-$\Sigma$ operad in $\oC$ defined by setting $\A M(n):=M^{\otimes (n-1)}$ for all $n\geq 2$. The partial composition~${\circ_i: M^{\otimes (n-1)}\otimes M^{\otimes (m-1)}
\to M^{\otimes (n+m-2)}}$ identifies $\A M(m)=M^{\otimes (m-1)}$ with the factor of~${\A M(n+m-2)=M\otimes \dots \otimes\underbrace{M \otimes \dots M}_{m-1}\otimes \dots M}$ 
starting at the index $i$ for $1\leq i\leq n$. 
Stated less formally, this can be described as inserting $M^{\otimes (m-1)}$ into the gap between the $(i-1)$-th and the $i$-th copies of $M$ within $M^{\otimes (n-1)}$ if $2\leq i\leq n - 1$, prepending it to $M^{\otimes (n-1)}$ if~$i=1$ and appending it to the right of $M^{\otimes (n-1)}$ if $i=n$. If $\oC$ is concrete, then $\A M$ can be characterized as the non-symmetric binary quadratic operad generated by the elements of $M$ subject to the relations of the form~$m \circ_1 m'=m'\circ_2 m$ for all $m,m'\in M$. That is, $\A M$ is an operad of mutually compatible binary associative operations. In particular, the ordinary non-$\Sigma$ set-valued associative operad corresponds to the case of $M=\{*\}$ in $\Sets$. 

While non-symmetric (or \textit{non-$\Sigma$}) in the sense of the common terminology, an $M$-associative operad in a concrete category $\oC$ comes with two mutually commuting group actions on $\A M(n)$ for all $n\geq 2$: these are the diagonal action of $Aut(M)$ and the action of $\mathbb{Z}_2$ that reverses an element of $\A M(n)$ considered as a word of length $n-1$. Such a $\mathbb{Z}_2$-action $\#: \A M\to \A M$ is subject to the equivariance condition that reads 
\begin{align}
\label{AMInvol}
(v\circ_i w)^\#=v^\#\circ_{n-i+1}w^\#
\end{align}
 for all~$m, n\geq 2$, $v\in \A M(m)$,~ $w\in \A M(n)$, $1\leq i\leq m$.
\section{Lattice operads}
\label{LatOps}
In what follows, by a \emph{lattice operad} we mean an operad (possibly, a non-$\Sigma$ operad) in the category $\Lat$ with one of the symmetric monoidal structures described above. 
By default, the results presented below are stated for $\Lat$ with the cartesian product $\times$. Indeed, by virtue of Lemma~\ref{LaxMono} and the subsequent observations, an operad in $(\Lat, \dtimes)$, $(\Lat, \utimes)$ or $(\Lat, \sqcup)$  gives rise to an operad in $(\Lat, \times)$ that captures the same compositional structure.

Given a lattice $L$, the natural partial order on $L$ is defined, as usual, via~${a\leq b \Leftrightarrow a = a\wedge b}$ for all $a, b\in L$. In particular, any homomorphism $f: L\to K$ of lattices is monotonic in the sense that~${a\leq b \Rightarrow f(a)\leq f(b)}$ for all $a, b\in L$. This leads to an immediate observation that for a lattice operad $\oP$, the $\circ_i$-compositions are monotonic in each argument. That is,
   for any $m, n\geq 1$, $1\leq i\leq m$, $a, b\in \mathcal{P}(m)$ and $c\in \mathcal{P}(n)$  such that $a\leq b$,
    \[
    a\circ_i c \leq b\circ_i c,
    \] 
    Indeed, given such $a, b, c$, by definition of the monoidal structure on $(\Lat, \times)$, we have $(a, c) \leq (b, c)$ in~${\oP(m)\times \oP(n)}$.
    Since $\circ_i: \oP(m)\times \oP(n) \to \oP(m+n-1)$ is a lattice homomorphism and, hence, monotonic,  the comparison follows.
    Similarly, for any $1\leq j\leq n$, we have
    \[
    c\circ_j a\leq c\circ_j b.
    \]   
  Furthermore, the partial compositions are distributive in the following sense.
\begin{lemma}
 \label{MonoDist}
  Let $\oP$ be a lattice operad. 
  Then  
   \begin{subequations}
   \begin{align}   
    \label{LatDist1}
    a\circ_i (b\wedge c) &= (a\circ_i b)\wedge (a\circ_i c)\\
    a\circ_i (b\vee c) &= (a\circ_i b)\vee (a\circ_i c)    
    \end{align} 
    \end{subequations}
    for all $m, n\geq 1$, $1\leq i\leq m$, $a\in \mathcal{P}(m)$, $b, c\in \mathcal{P}(n)$,
    and
    \begin{subequations}
    \begin{align}
    \label{LatDist2}
    (a\wedge b)\circ_i c = (a\circ_i c)\wedge (b\circ_i c)\\
    (a\vee b)\circ_i c = (a\circ_i c)\vee (b\circ_i c)
    \end{align}
    \end{subequations}
    for all $m, n\geq 1$, $1\leq i\leq m$, $a, b\in \mathcal{P}(m)$, $c\in \mathcal{P}(n)$. 
\end{lemma}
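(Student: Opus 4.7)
The plan is to deduce both pairs of identities directly from the defining property that each partial composition $\circ_i: \oP(m)\times \oP(n) \to \oP(m+n-1)$ is a lattice homomorphism, together with the fact that meets and joins in the product lattice $\oP(m)\times \oP(n)$ are computed componentwise.

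First, for the identities in the second argument I would observe that in $\oP(m)\times \oP(n)$ one has
\[
(a, b\wedge c) = (a\wedge a,\, b\wedge c) = (a,b)\wedge (a,c),
\]
and analogously $(a, b\vee c) = (a,b)\vee (a,c)$. Applying the lattice homomorphism $\circ_i$ and using that it preserves meets and joins then gives
\[
a\circ_i (b\wedge c) = \circ_i\!\bigl((a,b)\wedge(a,c)\bigr) = (a\circ_i b)\wedge (a\circ_i c),
\]
and similarly for the join. This establishes the identities \eqref{LatDist1}.

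The second pair \eqref{LatDist2} follows by the symmetric argument: in $\oP(m)\times \oP(n)$ we have $(a\wedge b, c)=(a,c)\wedge (b,c)$ and $(a\vee b,c)=(a,c)\vee(b,c)$, so applying $\circ_i$ yields the desired distributivity in the first argument.

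There is no substantive obstacle; the only point that requires care is ensuring that we are genuinely working with the cartesian product of lattices, since only then are meets and joins computed coordinatewise. This is why the discussion preceding the lemma emphasizes the reduction, via Lemma~\ref{LaxMono} and its analogues, of lattice operads in $(\Lat,\dtimes)$, $(\Lat,\utimes)$ and $(\Lat,\sqcup)$ to operads in $(\Lat,\times)$, so that the present argument covers all four monoidal settings uniformly.
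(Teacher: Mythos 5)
Your proof is correct and follows essentially the same route as the paper: the paper likewise uses that $\circ_i$ is a lattice homomorphism out of the cartesian product, derives $(p\wedge r)\circ_i(q\wedge s)=(p\circ_i q)\wedge(r\circ_i s)$ from componentwise meets, and then specializes $p=r=a$, $q=b$, $s=c$, which is just your $(a,b\wedge c)=(a,b)\wedge(a,c)$ observation written in general form. Your closing remark about reducing the other monoidal structures to $(\Lat,\times)$ matches the paper's framing as well.
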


\begin{proof}
  We will derive the first of the listed equalities. The remaining three are analogous.  
  To this end, we note that since partial compositions are lattice homomorphisms, then
  \[
  \circ_i((p,q) \wedge (r,s))=\circ_i(p,q)\wedge \circ_i(r,s)
  \]
  for any $p,r\in\oP(m)$, $q,s\in \oP(n)$ and $1\leq i\leq m$.
  By definition of the cartesian product in $\Lat$, we have ~${(p,q) \wedge (r,s)=(p\wedge r, q\wedge s)}$ and thus,
  \begin{align}
  \label{LatOpDefMeet}
   (p\wedge r) \circ_i (q\wedge s) = (p\circ_i q)\wedge (r\circ_i s)
  \end{align}
  Upon taking $p=r=a$, $q=b$, $s=c$, we get the desired equality.
\end{proof}
While monotonicity of partial compositions by no means suffices, in general, to establish that an operad $\oP$ with each component $\oP(n)$ being a lattice is an operad in $\Lat$, in practice, verification of the defining relation \eqref{LatOpDefMeet} and its dual form 
  \begin{align}
  \label{LatOpDefJoin}
   (p\vee r) \circ_i (q\vee s) = (p\circ_i q)\vee (r\circ_i s)
  \end{align}
for the join can be slightly simplified by means of the following elementary observation. 
\begin{lemma}
\label{WeakDist}
Let $\oP$ be a set-valued operad such that for each $n\geq 1$, $\oP(n)$ is a lattice, all partial compositions $\circ_i: \oP(m)\times \oP(n) \to \oP(m+n-1)$ are monotonic in each argument with respect to the natural partial orders on both the source and the target, and 
\begin{subequations}
   \begin{align}   
    \label{LatOpIneqMeet}
    (p\wedge r) \circ_i (q\wedge s) \geq (p\circ_i q)\wedge (r\circ_i s)\\
    \label{LatOpIneqJoin}
    (p\vee r) \circ_i (q\vee s) \leq (p\circ_i q)\vee (r\circ_i s)
    \end{align} 
    \end{subequations}
 for all $p,r\in\oP(m)$, $q,s\in \oP(n)$ and $1\leq i\leq m$
Then $\oP$ is a lattice operad
\end{lemma}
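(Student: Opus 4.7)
The plan is to reduce the verification that each partial composition $\circ_i: \oP(m)\times \oP(n)\to \oP(m+n-1)$ is a lattice homomorphism to checking the two equalities \eqref{LatOpDefMeet} and \eqref{LatOpDefJoin}, since under the cartesian product structure on $\oP(m)\times \oP(n)$ the meet and join are computed componentwise. These two equalities are exactly the ones asserted as inequalities \eqref{LatOpIneqMeet} and \eqref{LatOpIneqJoin} in the hypothesis, so in each case it suffices to derive the opposite inequality from monotonicity alone.

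First I would handle the meet case. Given $p,r\in \oP(m)$ and $q,s\in \oP(n)$, observe that $p\wedge r\leq p$, $p\wedge r\leq r$, $q\wedge s\leq q$ and $q\wedge s\leq s$. Applying monotonicity of $\circ_i$ in each argument successively, one gets $(p\wedge r)\circ_i(q\wedge s)\leq p\circ_i q$ and $(p\wedge r)\circ_i(q\wedge s)\leq r\circ_i s$, which by the universal property of the meet yields
\[
(p\wedge r)\circ_i(q\wedge s)\leq (p\circ_i q)\wedge (r\circ_i s).
\]
Combined with the hypothesis \eqref{LatOpIneqMeet}, this forces equality, which is precisely \eqref{LatOpDefMeet}.

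The join case is dual: from $p\leq p\vee r$, $r\leq p\vee r$, $q\leq q\vee s$, $s\leq q\vee s$ and monotonicity one concludes $(p\circ_i q)\vee(r\circ_i s)\leq (p\vee r)\circ_i (q\vee s)$, which together with \eqref{LatOpIneqJoin} gives the reverse equality \eqref{LatOpDefJoin}. Thus $\circ_i$ sends componentwise meets to meets and componentwise joins to joins, hence is a lattice homomorphism, completing the proof that $\oP$ is a lattice operad.

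There is essentially no serious obstacle here; the content of the lemma is the observation that monotonicity automatically supplies one of the two inequalities in each of the distributivity equations, so in practice one only needs to verify the nontrivial direction. The only thing that requires a little care is bookkeeping the correct direction of the inequality coming from monotonicity in each of the meet and join cases, to ensure it matches the direction opposite to the assumed inequality.
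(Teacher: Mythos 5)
Your argument is correct and coincides with the paper's own proof: both derive the reverse of \eqref{LatOpIneqMeet} by applying monotonicity in each argument separately to get $(p\wedge r)\circ_i(q\wedge s)\leq p\circ_i q$ and $\leq r\circ_i s$, then invoke the universal property of the meet, with the join case treated dually. No gaps.
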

\begin{proof}
By monotonicity of $\circ_i$ in each argument,
$$
(p\wedge r) \circ_i (q \wedge s) \leq p\circ_i (q \wedge s) \leq p\circ_i q
$$
and
$$
(p\wedge r) \circ_i (q \wedge s) \leq r\circ_i (q \wedge s) \leq r\circ_i s,
$$
whence the opposite of \eqref{LatOpIneqMeet} follows, yielding \eqref{LatOpDefMeet}.
The argument concerning compatibility with joins is analogous. 
\end{proof}

\begin{definition}
\label{LaxDef}
 A \emph{lax morphism} of lattice operads $f:K\to L$
 is a family of lattice homomorphisms~${f_n:K(n)\to L(n)}$ for all $n\geq 1$, which are required to be $\S_n$-equivariant if $K$ and $L$ are symmetric, such that
 \reqnomode
 \begin{align}
 \label{LaxMorph}
   f(a)\circ_i f(b)\leq f(a\circ_i b)
 \end{align}
 for all $m, n\geq 1$, $1\leq i\leq m$, $a\in K(m)$, $b\in K(n)$.
\end{definition}
Lattice operads together with lax morphisms form a category that we denote $\LatOper$.
Indeed, we have the following 
\begin{lemma}
Let $K, L, M$ be lattice operads.
The composition of lax morphisms $K\overset{f}{\to} L\overset{g}{\to} M$ is a lax morphism.
\end{lemma}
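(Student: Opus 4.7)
The plan is to verify the two conditions of Definition \ref{LaxDef} for the composite $g\circ f$, namely that each component $(g\circ f)_n = g_n\circ f_n$ is a lattice homomorphism (equivariant when $K,L,M$ are symmetric) and that the inequality \eqref{LaxMorph} holds. The first point is immediate: a composition of lattice homomorphisms is a lattice homomorphism, and a composition of $\S_n$-equivariant maps is $\S_n$-equivariant, so this requires no real work.

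For the inequality, I would fix $m,n\geq 1$, $1\leq i\leq m$, $a\in K(m)$ and $b\in K(n)$, and chain together two instances of the lax condition with the monotonicity of $g$. Explicitly, applying \eqref{LaxMorph} for $g$ to the pair $f(a)\in L(m)$, $f(b)\in L(n)$ gives
\[
g(f(a))\circ_i g(f(b)) \leq g\bigl(f(a)\circ_i f(b)\bigr).
\]
Applying \eqref{LaxMorph} for $f$ to $a,b$ yields $f(a)\circ_i f(b)\leq f(a\circ_i b)$, and since $g$ is a lattice homomorphism it is monotonic, so
\[
g\bigl(f(a)\circ_i f(b)\bigr) \leq g\bigl(f(a\circ_i b)\bigr).
\]
Transitivity of $\leq$ in $M(m+n-1)$ then gives $(g\circ f)(a)\circ_i (g\circ f)(b)\leq (g\circ f)(a\circ_i b)$, as required.

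There is essentially no obstacle here; the only point that needs to be recalled is that lattice homomorphisms are automatically monotonic, which is noted in the paragraph preceding Lemma \ref{MonoDist}. Associativity of composition of the underlying maps at each arity ensures that this operation on lax morphisms gives $\LatOper$ the structure of a category, with identity morphisms provided by the identity lattice homomorphisms (which trivially satisfy \eqref{LaxMorph} as an equality).
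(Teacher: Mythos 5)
Your argument is correct and coincides with the paper's own proof: both apply the lax inequality for $g$ to the pair $f(a),f(b)$, then use the lax inequality for $f$ together with the monotonicity of $g$ (as a lattice homomorphism) and transitivity. The additional remarks on equivariance, the homomorphism property of the composite, and identity morphisms are routine and consistent with the paper.
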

\begin{proof}
Let $m, n \geq 1$, $1 \leq i \leq m$ and $a \in K(m)$, $b\in K(n)$.
Since $g$ is lax, we have
\begin{align*}
 (g\circ f)(a) \circ_i (g\circ f)(b) = 
 g(f(a)) \circ_i g(f(b)) \leq g(f(a)\circ_i f(b)). 
\end{align*}
Since $f(a)\circ_i f(b)\leq f(a\circ_i b)$ due to $f$ being lax, and $g$ on the right-hand side is monotonic as a lattice homomorphism $L(m+n-1)\to M(m+n-1)$, then
\[
g(f(a)\circ_i f(b)) \leq g(f(a\circ_i b)) = (g\circ f)(a\circ_i b)
\]
completing the proof.
\end{proof}
The premise of condition \eqref{LaxMorph} and, accordingly, the definition of $\LatOper$ will become more apparent in the context of Theorem \ref{FiltFuncL}.
Meanwhile, we would note that while any morphism of operads in $\Lat$ in the ordinary sense is lax, the converse is not true; cf. remark \ref{TotEx} below for an example.

Some ways of constructing new lattice operads from the given ones are as follows. First, recall that for a lattice~$L$, the poset of intervals $Int(L)$ carries a natural lattice structure with $[a,b]\wedge [c,d]:=[a\vee c, b \wedge d]$ and 
${[a,b]\vee [c,d]:=[a\wedge c, b \vee d]}$. 
\begin{lemma}
Let $\oP$ be a lattice operad. Then
$Int(\oP):=\{Int(\oP)(n)\}_{n\geq 1}$ with the partial compositions
$$[p, r]\circ_i [q,s]:=[p\circ_i q, r\circ_i s]$$
for $[p,r]\in Int(\oP)(m)$,
$[q,s]\in Int(\oP)(n)$ and $1 \leq i\leq m$
is a lattice operad as well.
\end{lemma}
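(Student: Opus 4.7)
The plan is to verify the three pieces of data and axioms that constitute a lattice operad: (a) that each $Int(\oP)(n)$ is a lattice (already part of the setup, recalled from the paragraph immediately preceding the statement), (b) that the prescribed partial compositions are well-defined and satisfy the operadic axioms, and (c) that each $\circ_i:Int(\oP)(m)\times Int(\oP)(n)\to Int(\oP)(m+n-1)$ is a lattice homomorphism (together with equivariance when $\oP$ is symmetric, and unitality).

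First I would check well-definedness: given $[p,r]\in Int(\oP)(m)$ and $[q,s]\in Int(\oP)(n)$, the inequalities $p\leq r$ and $q\leq s$ together with monotonicity of $\circ_i$ in each argument (established earlier in the section) yield $p\circ_i q\leq r\circ_i q\leq r\circ_i s$, so $[p\circ_i q, r\circ_i s]$ is a valid interval. The operadic axioms (associativity, equivariance, and unitality with unit $[\id,\id]$) then reduce to the corresponding axioms in $\oP$ applied separately to the lower and upper endpoints, since an interval is determined by its endpoints and the assignment $[a,b]\mapsto(a,b)$ embeds $Int(\oP)(n)$ into $\oP(n)\times\oP(n)^{op}$ compatibly with composition defined endpoint-wise.

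The core of the proof is verifying that $\circ_i$ is a lattice homomorphism on $Int(\oP)(m)\times Int(\oP)(n)$, and this is where the distributivity identities \eqref{LatOpDefMeet} and \eqref{LatOpDefJoin} for $\oP$ are essential. Using the formulas $[p_1,r_1]\wedge[p_2,r_2]=[p_1\vee p_2, r_1\wedge r_2]$ and $[q_1,s_1]\wedge[q_2,s_2]=[q_1\vee q_2, s_1\wedge s_2]$, the left-hand side expands to
\[
\bigl[(p_1\vee p_2)\circ_i(q_1\vee q_2),\;(r_1\wedge r_2)\circ_i(s_1\wedge s_2)\bigr].
\]
Applying \eqref{LatOpDefJoin} to the lower endpoint and \eqref{LatOpDefMeet} to the upper endpoint converts this into
\[
\bigl[(p_1\circ_i q_1)\vee(p_2\circ_i q_2),\;(r_1\circ_i s_1)\wedge(r_2\circ_i s_2)\bigr],
\]
which is precisely $([p_1,r_1]\circ_i[q_1,s_1])\wedge([p_2,r_2]\circ_i[q_2,s_2])$ by the meet formula on $Int(\oP)(m+n-1)$. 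The analogous calculation with the roles of meet and join exchanged handles the join case.

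The only mildly delicate point to track is the crossing of meets and joins between endpoints: the lower endpoint of a meet of intervals is a join, while the upper endpoint is a meet, and this is exactly the pattern that matches the two halves of Lemma \ref{MonoDist}. Once this bookkeeping is set up, no further work is needed; equivariance (in the symmetric case) transfers endpoint-wise from $\oP$, and $[\id,\id]$ serves as unit because $\id$ does in $\oP$.
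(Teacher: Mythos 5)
Your argument is correct and coincides with the paper's proof, which consists of the single remark that a repeated application of Lemma \ref{MonoDist} yields a direct verification of the defining conditions \eqref{LatOpDefMeet} and \eqref{LatOpDefJoin}; your endpoint-wise computation, applying the join identity to the lower endpoints and the meet identity to the upper endpoints (matching the crossed pattern $[a,b]\wedge[c,d]=[a\vee c, b\wedge d]$), is exactly that verification written out. (One immaterial nit: the natural lattice embedding in your side remark is $Int(\oP)(n)\hookrightarrow \oP(n)^{op}\times\oP(n)$ rather than $\oP(n)\times\oP(n)^{op}$, but this plays no role in the argument.)
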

\begin{proof}
A repeated application of Lemma \ref{MonoDist}
leads to a direct verification of the defining conditions \eqref{LatOpDefMeet}, \eqref{LatOpDefJoin}.
\end{proof}
Similarly, we get the following
\begin{lemma}
Let $\oP$ be an operad in the category of posets and monotonic maps. Then $Ord(\oP)=\{Ord(\oP)(n)\}_{n\geq 1}$, where $Ord(P)$ denotes the lattice of order ideals in a poset $P$, is a lattice operad with the partial compositions 
\[
I \circ_i J:=\{x\in \oP(m+n-1)| x\leq p\circ_i q\text{ for some }p\in \oP(m), q\in \oP(n)\}
\]
for $I\in Ord(\oP)(m)$, $J\in Ord(\oP)(n)$ and $1\leq i\leq m$.
\end{lemma}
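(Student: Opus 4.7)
The strategy is to verify the defining properties of a lattice operad for $Ord(\oP)$ arity by arity, and then to establish the distributivity via Lemma \ref{WeakDist}. First, recall that for any poset $P$, the collection $Ord(P)$ of order ideals forms a distributive lattice under inclusion, with $\wedge = \cap$ and $\vee = \cup$. The subset $I \circ_i J$ is downward-closed by construction, so it is an order ideal in $\oP(m+n-1)$; moreover it is manifestly monotonic in each of its arguments with respect to inclusion, since enlarging $I$ or $J$ enlarges the set of representative compositions and hence its downward closure.

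Next, I would verify the operadic axioms (unit, associativity, and $\Sigma_n$-equivariance in the symmetric case) by expressing each iterated composition of ideals as the downward closure of the corresponding iterated composition of representatives and invoking the matching axioms in $\oP$. For associativity, for instance, both sides of the operadic associativity law applied to ideals $I$, $J$, $K$ reduce to the downward closure of the triple compositions $p \circ_i q \circ_j r$ with $p \in I$, $q \in J$, $r \in K$, which agree in $\oP$ by its own associativity; the unit and equivariance axioms are analogous.

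To establish the lattice-operad property I would then appeal to Lemma \ref{WeakDist}. Monotonicity in each argument has just been noted, so it remains to verify the two dual inequalities \eqref{LatOpIneqMeet} and \eqref{LatOpIneqJoin}, which here read
\begin{align*}
(I \cap K) \circ_i (J \cap L) &\supseteq (I \circ_i J) \cap (K \circ_i L),\\
(I \cup K) \circ_i (J \cup L) &\subseteq (I \circ_i J) \cup (K \circ_i L).
\end{align*}
For each, I would pick an element $x$ of the right-hand side, unpack the definition of $\circ_i$ on ideals to produce representative pairs $(p,q)$ with $p \in I$, $q \in J$ (and analogously for $K, L$), and use the monotonicity of $\circ_i$ in $\oP$ together with the description of every order ideal as the union of its principal sub-ideals to manufacture a pair of representatives witnessing membership of $x$ in the left-hand side.

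The main obstacle is precisely the verification of these two dual inequalities, since the downward-closure operation built into the definition of $I \circ_i J$ is not a priori compatible with the set-theoretic intersection and union of ideals. The argument must therefore carefully leverage the monotonicity of $\circ_i$ in $\oP$ and the principal-ideal generation of order ideals, in order to control the cross-terms $p \circ_i q$ arising from representatives on different sides of the lattice operations and to absorb them into the appropriate intersection or union on the left-hand side.
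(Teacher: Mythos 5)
The paper states this lemma without proof (it is offered as a direct analogue of the preceding lemma on $Int(\oP)$, whose proof, however, invokes Lemma \ref{MonoDist} and hence the assumption that $\oP$ is already a lattice operad, which is unavailable here), so there is no argument in the paper to compare yours against and your proposal must stand on its own. Up to the reduction step it does: reading the displayed formula as $x\leq p\circ_i q$ for some $p\in I$, $q\in J$ (as you do; the literal quantification over all of $\oP(m)$, $\oP(n)$ is evidently a typo), the composite is an order ideal, it is monotone in each argument with respect to inclusion, and the operadic axioms follow from those of $\oP$ essentially as you indicate. Reducing the lattice-operad property to the two inequalities \eqref{LatOpIneqMeet} and \eqref{LatOpIneqJoin} via Lemma \ref{WeakDist} is also the right move.

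However, the proposal stops exactly where the proof has to begin: you never actually produce the ``pair of representatives witnessing membership of $x$ in the left-hand side,'' you only assert that one can be manufactured from monotonicity and the principal-ideal decomposition. This step is not a routine verification --- as you have set things up, it fails. Take $\oP=C\bbZ$ concentrated in arity one, i.e.\ the ordered monoid $(\bbZ,+,\leq)$, whose order ideals are $\varnothing$, the principal ideals $\downarrow k$, and $\bbZ$; then $\downarrow a\circ_1\downarrow b=\downarrow(a+b)$. With $I=\downarrow 0$, $K=\downarrow 1$, $J=\downarrow 1$, $L=\downarrow 0$ one gets $(I\cup K)\circ_1(J\cup L)=\downarrow 2$ while $(I\circ_1 J)\cup(K\circ_1 L)=\downarrow 1$, violating \eqref{LatOpIneqJoin}; dually, $(I\cap K)\circ_1(J\cap L)=\downarrow 0$ while $(I\circ_1 J)\cap(K\circ_1 L)=\downarrow 1$, violating \eqref{LatOpIneqMeet}. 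The root cause is precisely the cross terms you flag: $(I\cup K)\times(J\cup L)$ strictly contains $(I\times J)\cup(K\times L)$, and taking the down-closure of the image of $\circ_i$ does not repair the discrepancy; dually, an element of $(I\circ_i J)\cap(K\circ_i L)$ may be dominated by two different composites without being dominated by any composite of elements of $I\cap K$ and $J\cap L$. Since the reverse inclusions do hold by monotonicity, the equalities \eqref{LatOpDefMeet} and \eqref{LatOpDefJoin} genuinely fail for this composition, so no choice of representatives can close the gap: either the composition on $Ord(\oP)$ must be read differently, or additional hypotheses on $\oP$ are needed, but the argument as outlined cannot be completed.
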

Finally, we note that any lattice operad $\oP$ gives rise to the dual lattice operad $\oP^{op}$ with $\oP^{op}(n)$ be the lattice dual to $\oP(n)$ and the same partial compositions as in $\oP$ at the level of sets. In particular, a lattice operad $\oP$, where each lattice $\oP(n)$ is orthocomplemented, admits an automorphism, induced by the orthocomplements, as an operad in $\Sets$.  
\section{Examples}
\label{LatOpEx}
In the current and the subsequent section we present examples of lattice operads.
Some of the example have emerged earlier in studies of filtrations of operads \cite{superbig, dotsenko, khoroshkin}, while others arise upon endowing some classical lattices of combinatorial origin by operadic structures with respective symmetries.
\begin{example}
\label{LZEx}
  Let $C\bbZ=\{C\bbZ(n)\}_{n\geq 1}$ be defined by $C\bbZ(n):=\mathbb{Z}$ with the usual order (thus $\wedge$, $\vee$ are the minimum and the maximum respectively) and the trivial action of the symmetric group $\S_n$ for all $n\geq 1$. Define $a\circ_i b:=a+b$ for all $a\in C\bbZ(m)$, $b\in C\bbZ(n)$ and $1\leq i\leq m$. 
  Then $C\bbZ$ is a lattice operad.

  \label{CIEx}
  More generally, given a non-empty set $I$, we define the \textit{counting operad $C\bbZ^I$ with respect to $I$} by setting $C\bbZ^I(n):=\bbZ^I$ with the natural lattice structure inherited from $\bbZ$ as before, the trivial $\S_n$-action and the partial compositions defined 
   by setting $(a_p)_{p\in I} \circ_i (b_p)_{p\in I}:=(a_p + b_p)_{p\in I}$ for all $a=(a_p)_{p\in I}\in C\bbZ^I(n)$, $b=(b_p)_{p\in I}\in C\bbZ^I(m)$, $m, n\geq 1$ and $1 \leq i\leq m$. 
  The terminology is to be elucidated later, cf. example \ref{GenCntEx}.
  The lattice operad $C\bbZ^I$ can be identified with the $I$-fold cartesian product of the lattice operad $C\bbZ$ with itself.
\end{example}

\begin{example}
\label{LatOpExMZ}
   For $n\geq 1$, let $M\mathbb{Z}(n)$ be $\mathbb{Z}^n$ with the product order inherited from $\mathbb{Z}$.
   For each $n\geq 1$, $\MZ(n)$ is a lattice with the join and the meet operations being
\[
  (\rada {p'_1}{p'_n})\vee (\rada {p''_1}{p''_n}) :=  (\rada {\max(p'_1, p''_1)}{\max(p'_n, p''_n)})
\]
and
\[
  (\rada {p'_1}{p'_n})\wedge (\rada {p''_1}{p''_n}) :=  (\rada {\min(p'_1, p''_1)}{\min(p'_n, p''_n)})
\]
respectively. 
With the natural permutation action of $\S_n$ and the partial compositions 
\[
\circ_i: \MZ(m)
\times \MZ(n) \to \MZ(m+n-1),
\ n \geq 1,\ 1 \leq i \leq m, 
\] 
defined for 
$(\Rada a1m) \in \MZ(m)$ and $(\Rada b1n) \in \MZ(n)$ by
\[
(\Rada a1m) \circ_i (\Rada b1n) := (\Rada a1{i-1},
\rada {b_1 + a_i}{b_n + a_i},
\Rada a{i+1}m),
\]
the collection $\MZ=\{\MZ(n)\}_{n\geq 1}$ makes up a unital lattice operad of \textit{multi-indicies}.
Explicit descriptions of components of certain lattice suboperads of $\MZ$ in small arities are presented in the appendix. 
\end{example}

\begin{remark}
\label{TotEx}
 Let $C\bbZ$ and $M\mathbb{Z}$ be as above.
 The mapping $\rho:C\bbZ\to M\mathbb{Z}$ defined by setting
 \[
  \rho(k):=\underbrace{(k,\dots,k)}_{n}
 \]
 for all $n\geq 1$ and $k\in C\bbZ(n)=\bbZ$ is a lax morphism of lattice operads. 
 To see that it is not, in fact, an operad morphism in the ordinary sense, take $1\in C\bbZ(2)$ and observe that $\rho(1)\circ_1 \rho(1)=(1,1)\circ_1 (1,1)=(2,2,1)$, while $\rho(1\circ_1 1)=\rho(\underbrace{1 + 1}_{\in C\bbZ(3)})=(2,2,2).$
\end{remark}

\begin{example}
\label{SubLatOp}
  Let $\mathcal{P}$ be a $\bbk$-linear operad. 
  For each $n\geq 1$, take $Sub(\mathcal{P})(n)$ to be the lattice of all $\bbk$-linear subspaces of $\mathcal{P}(n)$ with the $\S_n$-action induced from $\mathcal{P}(n)$.
  Given $m, n\geq 1$, $1\leq i\leq m$, and two subspaces $V\in Sub(\mathcal{P})(m)$, $U\in Sub(\mathcal{P})(n)$, we define
   $V\circ_i U$ to be the span of $\{a\circ_i b|a\in V, b\in U\}$. 
   Then $Sub(\mathcal{P}):=\{Sub(\mathcal{P})(n)\}_{n\geq 1}$ is a lattice operad.  
\end{example}

\begin{example} [V.Dotsenko, A.Khoroshkin]
   \label{InvOp}
   Let $\Free(\{a,b\})$ be the free symmetric operad in $\Sets$ on two generators $a, b$ of arity $2$. An element $T\in\Free(\{a,b\})(n)$ can be identified with a rooted (planar) binary tree with $n$ leaves and $n-1$ internal vertices labeled by $a$ and $b$. 
   Define the degree $\deg(T)$ to be the number of $a$'s among the labels,  and the number of inversions $\inv(T)$ to be the number of pairs $(v_1, v_2)$ of internal vertices of $T$ such that vertex $v_1$ is labeled by $b$, vertex $v_2$ is labeled by $a$ and $v_2$ is a descendant of $v_1$.    
   
   \begin{center}
   \begin{figure}[H]
   \centering
   \subcaptionbox{$\deg(T)=3, \inv(T)=2$}[.48\textwidth]
   {
    \begin{tikzpicture}[circle, minimum size=2pt, sibling distance=2.4cm, level distance=0.6cm]
    \node [fill=red!40, draw]{$a$}
    child {node {}}
    child {node [fill=cyan!40, draw]{$b$}
      child {node {}}
      child {node [fill=red!40, draw]{$a$}
        child {node [fill=red!40, draw] {$a$}
            child {node {}}
            child {node {}}
        }
        child {node {}}
      }
    };
    \end{tikzpicture}
    }
    \subcaptionbox{$\deg(T)=2, \inv(T)=4$}[.48\textwidth]
    {
    \begin{tikzpicture}[circle, minimum size=2pt, sibling distance=2.4cm, level distance=0.6cm]    
    \node [fill=cyan!40, draw]{$b$}
    child {node [fill=cyan!40, draw]{$b$}
        child {node [fill=red!40, draw]{$a$}
            child {node [fill=red!40, draw] {$a$}
            child {node {}}
            child {node {}}
            }
            child {node {}}
        }
        child {node {}}
    }
    child{node {}};
    \end{tikzpicture}
    }
   \end{figure}  
   \end{center}
   \noindent
    For each $n\geq 2$, define
    \[
    \Inv(n) := \{(\deg(T), \inv(T))\in \mathbb{N}_0\times \mathbb{N}_0| T\in \Free(\{a,b\})(n)\}.
    \]
    As a set, $\Inv(n)$ consists of all integer tuples $(d,v)$ such that $0 \leq d \leq n - 1$ and $0 \leq v \leq (n - d - 1)\cdot d$. The figures above depict some tree representing elements $(3, 2)$ and $(2,4)$ from $\Inv(5)$. 
    In particular, figure (B) illustrates how a tree $T$ with the maximum value of $\inv(T)$ for a fixed $\deg(T)$ can be constructed. For each $n\geq 2$, we equip $\Inv(n)$ with the lexicographic order and the trivial $\S_n$-action.
    
    Now, for $m, n\geq 2$, $(d, v)\in \Inv(n)$ and $(e, w)\in \Inv(m)$, the partial compositions in $
    \Inv$ are defined by setting
    \[
      (d, v)\circ_i (e, w) := (d + e, \max\limits_{T',T''}(\inv(T'\circ_i T'')))
    \]        
    for $1 \leq i\leq m$. Here, the maximum is taken over all trees $T'\in \Free(\{a,b\})(n)$, $T''\in \Free(\{a,b\})(m)$
    such that $\deg(T')=d$, $\inv(T')=v$ and $\deg(T'')=e$, $\inv(T'')=w$. That makes $\Inv=\{\Inv(n)\}_{n\geq 2}$ a lattice operad, which is actually a totally ordered set in each arity.
\end{example}

\begin{example}
\label{TamariOp}
Let $PBT$ be the free non-$\Sigma$ operad in $\Sets$ on a single binary generator. 
That is, for ${n\geq 2}$, $PBT(n)$ can be identified with the set of all planar rooted binary trees with $n$ leaves, while the partial composition $T'\circ_i T''$ for $T'\in PBT(m)$, $T''\in PBT(n)$, $1\leq i\leq m$ amounts to grafting the tree $T''$ onto the $i$-th leaf of $T'$ by its root.

Recall that a (left) \textit{rotation} in a binary tree $T$ is a transformation of the form
\begin{figure}[H]
 \begin{tikzpicture}[circle, minimum size=2pt, sibling distance=2cm, level distance=0.8cm]
    \node (A) at (0, 0) [fill=red!40, draw]{}    
    child {node[circle, dashed, draw] {A}}
    child {
    node [fill=red!40, draw] {}
    child {node [circle, dashed, draw] {B}}
    child {node [circle, dashed, draw]{C}}
    };
    \node (B) at (6, 0) [fill=red!40, draw]{}    
    child {
    node [fill=red!40, draw] {}
    child {node [circle, dashed, draw] {A}}
    child {node [circle, dashed, draw]{B}}
    }
    child {node[circle, dashed, draw] {C}}
    ;
    \draw [->] (2.8,-0.7) -- (3.8,-0.7);
    \end{tikzpicture}
\end{figure}
\noindent
performed on a subtree $S$ of $T$, where $A, B$ and $C$ are subtrees of $S$.
By taking rotation as the cover relation, the transitive closure gives $PBT(n)$ a lattice structure, known as the \textit{Tamari lattice of order $n-1$}.
Since a rotation in a tree $T$ changes only the inner vertices structure of $T$ and does not affect the leaves, it commutes with grafting. 
In particular, grafting as the partial composition $(-\circ_i-)$ respects the cover relation in both arguments, making $PBT$ a poset-valued operad. In fact, a stronger statement holds and we show that the partial compositions are compatible with the Tamari lattice structures. That is, $PBT$ is a lattice operad in the sense of our definition.

To this end, we employ the description of the Tamari lattice in terms of the \textit{weight sequences} of binary trees \cite{pallo}.
Namely, given a binary tree $T\in PBT(n)$
 with the leaves enumerated from left to right by $1,2\dots n$, the weight sequence of $T$ is the vector 
$(w_T(1), w_T(2),\dots, w_T(n))$, where $w_T(i)$ is the number of leaves of the largest subtree of $T$ that has leaf $i$ as its rightmost leaf. Such a subtree may be trivial, that is, consisting of a single leaf and no internal vertices, in which case $w_T(i)=1$. 
  \begin{figure}[H]
  {
   \begin{tikzpicture}[circle, minimum size=2pt, 
   level distance=1cm,
   level/.style={sibling distance=4cm/#1}
   ]
    \node [fill=red!40, draw]{5}    
    child 
    {
        node [fill=red!40, draw] {2}
        child
        {
            node [fill=red!40, draw] {1}
            child {}
            child {}
        }
        child 
        {
            node [fill=red!40, draw] {2}
            child
            {
                node [fill=red!40, draw] {1}
                child {}
                child {}
            }
            child 
            {
                node{}
            }
        }
    }
    child
    {
        node [fill=red!40, draw] {1}
        child {node {}}
        child {
                node [fill=red!40, draw] {1}
                child {node {}}
                child {node {}}
        }
    };
    \end{tikzpicture}
  }   
  \caption{A tree $T\in PBT(8)$ with the weight sequence $w_T=(1,2,1,2,5,1,1,8)$.}
  \label{WeightTree}
  \end{figure}
  Algorithmically, the weight sequence of a binary tree $T$ can be computed as follows.
  First, label each vertex $v$ of $T$ by the total number of leaves of the left subtree of $v$, as illustrated in the figure above. Then the weight sequence $w_T$ can be read off from the vertex labels recursively by performing an in-order traversal on $T$ starting from its root: process the left subtree, read the label at the current root and append it to $w_T$, process the right subtree. On a tree with $n$ leaves this procedure results in a sequence of $n-1$ vertex labels placed in a certain order. Additionally, the total number of leaves $n$ is appended to the end of the sequence as $w_T(n)=n$.
  \begin{remark}
The original definition of a weight sequence in \cite{pallo} does not have the final entry $w_T(n)=n$ that we include for our later convenience. Slight variations of this integer vector parametrization of planar rooted binary trees can be found in \cite{HuangTamari, Geyer}.
\end{remark}

  The set $W(n)$ of the weight sequences for all planar rooted binary trees with $n$ leaves can be characterized~\cite[Theorem 1]{pallo} as the set of all positive integer sequences $(w_1,w_2,\dots, w_n)$ such that 
\begin{align}
\label{SlopeCond}
i-j\leq w_i-w_j
\end{align}
 for all $i < n$, ${i - w_i + 1\leq j < i}$, and $w_n=n$.
 Note that the condition $w_n=n$ leads to $w_j\leq j$ for all $1\leq j\leq n$; it follows upon taking ${i=n}$ in the above inequality.
 In particular, this implies $w_1=1$.
 
  The assignment \( T \mapsto w_T \) is bijective. Specifically, given a weight sequence \( w_T = (w_1, w_2, \dots, w_{n-1}, n) \), the corresponding tree \( T \in PBT(n) \) can be recursively reconstructed by identifying its left and right subtrees \( T' \) and \( T'' \), and joining them at the root. The subtrees \( T' \) and \( T'' \) are recovered from their respective weight sequences \( w_{T'} \) and \( w_{T''} \), which are embedded within \( w_T \). These can be obtained by locating the largest entry \( w_r \) in \( w_T \) other than \( n \), which is guaranteed to be unique, and setting \( w_{T'} = (w_1, \dots, w_r) \) and \( w_{T''} = (w_{r+1}, \dots, w_{n-1}, n-r) \).

  \begin{figure}[H]
  {
   \resizebox{0.45\textwidth}{!} {\begin{tikzpicture}[circle, minimum size=2pt, 
level distance=1cm,
level/.style={sibling distance=4cm/#1},
emph/.style={edge from parent/.style={dashed,draw}},
norm/.style={edge from parent/.style={solid,draw}}
]
\node [fill=red!40, draw]{\phantom {\small 5}}    
child[emph]
{   
node [fill=red!40, draw] {2}
child[norm]
{
node [fill=red!40, draw, solid] {1}
child {}
child {}
}
child[norm] 
{
node [fill=red!40, draw, solid] {2}
child
{
node [fill=red!40, draw] {1}
child {}
child {}
}
child 
{
node{}
}
}
}
child[emph]
{
node [fill=red!40, draw] {1}
child[norm] {node {}}
child[norm] {
node [fill=red!40, draw, solid] {1}
child {node {}}
child {node {}}
}
};
\end{tikzpicture}}  
  }   
  \caption{The tree decomposition for $w_T=(1,2,1,2,5,1,1,8)$, $w_{T'}=(1,2,1,2,5)$, ${w_{T''}=(1,1,3)}$}
  \label{WeightTree2}
  \end{figure}
In terms of weight sequences, grafting of planar binary trees is expressed as follows.
  \begin{proposition}
  \label{WScomp}
  Let $u=(u_1,u_2,\dots, u_m)$ and $v=(v_1,v_2,\dots, v_n)$ be the weight sequences of trees ${S\in PBT(m)}$ and ${T\in PBT(n)}$ respectively.
  Then for $1\leq i\leq m$, the weight sequence of $S\circ_i T$ is
  \begin{align}
  \label{WPartComp}
  w = u \circ_i v:= (u_1,\dots, u_{i-1}, \underbracket[0.1ex]{v_1,\dots, v_{n-1},
  v_n+u_{i}-1}, u_{i+1}',u_{i+2}',\dots, u_{m}'),
  \end{align}
  where $u_j'=
  \begin{cases}
  u_j + n - 1, \quad & u_j \geq j-i+ 1\\
  u_j,\quad &\text{otherwise}
  \end{cases}$.  
  \end{proposition}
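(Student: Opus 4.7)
My plan is to analyze the weight sequence of $R:=S\circ_i T$ positionwise, using the characterization that for any binary tree $U$ with leaves labeled left-to-right by $1,\ldots,N$, the weight $w_U(k)$ equals the size of the \emph{unique} maximal subtree having leaf $k$ as its rightmost leaf. That subtree is rooted at the highest ancestor $\alpha$ of leaf $k$ such that the descending path from $\alpha$ to $k$ follows only right-child edges; call this the \emph{right-ancestor subtree} of $k$. Since $R$ is obtained from $S$ by identifying leaf $i$ of $S$ with the root of $T$, every subtree of $R$ either lies entirely inside the inserted copy of $T$ or is the image of a subtree of $S$, possibly with leaf $i$ replaced by a subtree of $T$ (or all of $T$). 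I will use this correspondence to transport right-ancestor subtrees and then count leaves.

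The argument then splits into four cases matching the four blocks in formula \eqref{WPartComp}. For $1\leq j\leq i-1$, leaf $j$ of $R$ corresponds to leaf $j$ of $S$, and its right-ancestor subtree lies strictly to the left of the grafting site, so its size is $u_j$. For $j=i+k-1$ with $1\leq k\leq n-1$, leaf $j$ is leaf $k$ of the inserted $T$; since $k<n$, leaf $k$ is not the rightmost leaf of $T$, so the ascending right-ancestor chain terminates strictly below the root of $T$ and the subtree has $v_k$ leaves. For $j=i+n-1$, the rightmost leaf of the grafted $T$, the ascending right-chain passes through the root of $T$ (which is identified with leaf $i$ of $S$) and continues exactly as the right-chain of leaf $i$ in $S$; the resulting subtree consists of the right-ancestor subtree of leaf $i$ in $S$ (size $u_i$) with its rightmost leaf replaced by all of $T$ (size $n$), giving a total of $u_i-1+n=v_n+u_i-1$ leaves.

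The last case, $i+n\leq j\leq m+n-1$ corresponding to leaf $\ell:=j-n+1$ of $S$ with $\ell>i$, is the most delicate. The right-ancestor subtree of leaf $\ell$ in $R$ is the image of the right-ancestor subtree of leaf $\ell$ in $S$; the latter has $u_\ell$ leaves and its leaves occupy positions $\ell-u_\ell+1,\ldots,\ell$ of $S$. If this range excludes leaf $i$, i.e.\ $\ell-u_\ell+1>i$ (equivalently $u_\ell<\ell-i+1$), the grafting does not touch the subtree and the leaf count stays at $u_\ell$. Otherwise $\ell-u_\ell+1\leq i$ (equivalently $u_\ell\geq \ell-i+1$), leaf $i$ belongs to the subtree and gets replaced by the $n$ leaves of $T$, yielding $u_\ell-1+n=u_\ell+n-1$ leaves. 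This is precisely the definition of $u_\ell'$.

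The main obstacle is Case (c), where I must verify that after grafting, the right-ancestor chain of the rightmost leaf of $T$ in $R$ indeed extends strictly past the root of $T$ and prolongs the chain of leaf $i$ in $S$. This relies on two facts: first, that the identification of the root of $T$ with leaf $i$ of $S$ makes the rightmost leaf of $T$ the new rightmost descendant of every ancestor of leaf $i$ in $S$ that previously had leaf $i$ as its rightmost descendant; second, that the chain cannot terminate \emph{earlier} in $T$ because, within $T$, the chain from leaf $n$ runs all the way to the root of $T$ by definition ($v_n=n$). Once this geometric statement is checked, Cases (a), (b), (d) are straightforward bookkeeping, and the four blocks together reproduce formula \eqref{WPartComp}.
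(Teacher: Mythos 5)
Your plan is correct and follows essentially the same route as the paper's proof: the same four-case split on the position $k$ of the leaf in $S\circ_i T$, with each weight computed as the leaf count of the maximal subtree having that leaf as its rightmost leaf (your ``right-ancestor subtree'' is just an explicit description of that subtree), and the same dichotomy $u_\ell \geq \ell-i+1$ versus $u_\ell < \ell-i+1$ governing whether the grafted copy of $T$ is absorbed. The only difference is presentational: you make the ascending right-child chain explicit where the paper argues directly about largest subtrees, but the counting in all four cases is identical.
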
  
  \begin{figure}[H]
  \resizebox{0.5\textwidth}{!} {\begin{tikzpicture}[circle, minimum size=2pt, sibling distance=2cm, level distance=0.7cm]
    \node [fill=red!40, draw] at (0, 0) {1} 
     child
    {
     node  {} 
    }
    child 
    {
        node [fill=red!40, draw] {2}
        child
        {
            node [fill=red!40, draw] {1}
            child {}
            child {}
        }
        child 
        {
            node  {}            
        }
    };
    \node at (2, -0.5) {$\circ_3$};
    \node [fill=red!40, draw] at (4, 0) {2} 
     child
        {
            node [fill=red!40, draw] {1}
            child {}
            child {}
        }
        child 
        {
            node  {}            
        };
   \node at (5.5, -0.5) {$=$};
   \node [fill=red!40, draw] at (7, 0.8) {1} 
     child
    {
     node  {} 
    }
    child 
    {
        node [fill=red!40, draw] {4}
        child
        {
            node [fill=red!40, draw] {1}
            child {}
            child {
            node [fill=red!40, draw] {2}
            child{
            node [fill=red!40, draw] {1}
            child{}
            child{}
            }
            child{}
            }
        }
        child 
        {
            node  {}            
        }
    };
    \end{tikzpicture}}  
   \caption{Grafting in $PBT$ encoded by $(1,1,2,4)\circ_3 (1,2,3)=(1,1,1,2,4,6)$ in $W$}
  \end{figure}
  \begin{proof}    
   We establish the formula for $w_k$ by considering four cases: $k<i$, $i\leq k\leq i+n-2$, $k=i+n-1$ and $k\geq i-n$.
   
   If $k<i$, then the largest subtree $S'$ of $S$ that has leaf $k$ as its rightmost leaf retains this property in $S\circ_i T$, since all the leaves of the grafted copy of $T$ in $S\circ_i T$ have indices in the range $i, i+1, \dots, i+n-1$. Thus, $w_k=u_k$.    
   
   Next, for $i\leq k \leq i+n-2$, note that none of the leaves of $T$, except possibly the $n$-th one, acquires any extra weight in $S\circ_i T$ upon grafting. Indeed, any subtree of $S\circ_i T$ that contains the rightmost leaf $k$ in this range, is contained entirely within the grafted copy of $T$. Thus, $w_{k}=v_{k-i+1}$

   To compute $w_{i+n-1}$, note that if $S'$ is the largest subtree of $S$ that has its rightmost leaf at $i$, then $S'\circ_i T$ is the largest subtree of $S\circ_i T$ that contains the $n$-th leaf of $T$ as its rightmost one upon grafting. Therefore, $w_{i+n-1}$ is equal to the total number of leaves of $S'\circ_i T$. Since the $i$-th leaf of $S$ becomes an internal node in $S'\circ_i T$ upon grafting, then $w_{i+n-1}=|\text{leaves of $T$}|+|\text{leaves of $S'$}|-1=v_n+u_i-1$.

   \begin{center}
  \begin{figure}[H]
  \subcaptionbox{$k<i$}
  {
  \resizebox{0.25\textwidth}{!} {\begin{tikzpicture}[scale=0.6,every node/.style={circle, inner sep=2pt}]
\def\sidelength{7}
\def\sf{5}
\coordinate (A1) at (0,0);
\coordinate (B1) at (\sidelength/2,3);
\coordinate (C1) at (\sidelength,0);
\coordinate (M1) at (\sidelength/2,2);

\coordinate (A2) at (2, -3);
\coordinate (B2) at (2+\sidelength/2*0.7, 0);
\coordinate (C2) at (2+\sidelength*0.7,-3);
\coordinate (M2) at (2+\sidelength/2*0.7,-1.4);

\coordinate (A3) at (0.5, 0);
\coordinate (B3) at (0.5+\sidelength/\sf, 6/\sf);
\coordinate (C3) at (0.5+2*\sidelength/\sf, 0);
\coordinate (M3) at (0.5+\sidelength/\sf,0.5);

\draw (A1) -- (B1) -- (C1) -- cycle;
\node at (M1) {$S$};

\draw (A2) -- (B2) -- (C2) -- cycle;
\node at (M2) {$T$};    

\draw[fill=red!30] (A3) -- (B3) -- (C3) -- cycle;
\node at (M3) {$S'$};    

\node[draw, fill=cyan!40, circle] at (B2) {$i$};
\node[draw, fill=red!40, circle] at (C3) {$k$};
\end{tikzpicture}}
  }
  \hspace{0.5cm}
  \subcaptionbox{$i\leq k\leq i+n-2$}
  {
  \resizebox{0.25\textwidth}{!} {\begin{tikzpicture}[scale=0.6,every node/.style={circle, inner sep=2pt}]
\def\sidelength{7}
\def\sf{5}
\coordinate (A1) at (0,0);
\coordinate (B1) at (\sidelength/2,3);
\coordinate (C1) at (\sidelength,0);
\coordinate (M1) at (\sidelength/2,2);

\coordinate (A2) at (2, -3);
\coordinate (B2) at (2+\sidelength/2*0.7, 0);
\coordinate (C2) at (2+\sidelength*0.7,-3);
\coordinate (M2) at (2+\sidelength/2*0.7,-1.4);

\coordinate (A3) at (2.5, -3);
\coordinate (B3) at (2.5+\sidelength/\sf, -1.5);
\coordinate (C3) at (2.5+2*\sidelength/\sf, -3);
\coordinate (M3) at (2.5+\sidelength/\sf,-2.5);

\draw (A1) -- (B1) -- (C1) -- cycle;
\node at (M1) {$S$};

\draw (A2) -- (B2) -- (C2) -- cycle;

\draw[fill=red!30] (A3) -- (B3) -- (C3) -- cycle;
\node at (M3) {$S'$};

\node at (M2) {$T$};    
\node[draw, fill=cyan!40, circle] at (B2) {$i$};
\node[draw, fill=red!40, circle] at (C3) {$k$};
\end{tikzpicture}}
  }
  \hspace{0.5cm}
  \subcaptionbox{$k=i+n-1$}
  {
  \resizebox{0.25\textwidth}{!} {\begin{tikzpicture}[scale=0.6,every node/.style={circle, inner sep=2pt}]
\def\sidelength{7}
\def\sf{5}
\coordinate (A1) at (0,0);
\coordinate (B1) at (\sidelength/2,3);
\coordinate (C1) at (\sidelength,0);
\coordinate (M1) at (\sidelength/2,2);

\coordinate (A2) at (2, -3);
\coordinate (B2) at (2+\sidelength/2*0.7, 0);
\coordinate (C2) at (2+\sidelength*0.7,-3);
\coordinate (M2) at (2+\sidelength/2*0.7,-1.4);

\coordinate (A3) at (1.7, 0);
\coordinate (B3) at (1.7+\sidelength/\sf, 6/\sf);
\coordinate (C3) at (1.7+2*\sidelength/\sf, 0);
\coordinate (M3) at (1.7+\sidelength/\sf,0.5);

\draw (A1) -- (B1) -- (C1) -- cycle;
\node at (M1) {$S$};

\draw[fill=red!30] (A2) -- (B2) -- (C2) -- cycle;

\draw[fill=red!30] (A3) -- (B3) -- (C3) -- cycle;
\node at (M3) {$S'$};    

\node at (M2) {$T$};    
\node[draw, fill=cyan!40, circle] at (B2) {$i$};
\node[draw, fill=red!40, circle] at (C2) {$k$};
\end{tikzpicture}}
  }  
  \end{figure}
  \end{center}
   To handle the remaining case $k\geq i+n$,
   let $j=k-n+1$ and consider the largest subtree $S''$ of $S$ that has the $j$-th leaf of $S$ as its rightmost one. Then, by definition of $u_j$, all the leaves of $S''$ have indicies spanning the range $j-u_j+1, j-u_j+2,\dots, j$ in $S$.
   Hence, if $j-u_j+1>i$, a copy of $S''$ in $S\circ_i T$ will remain to be the largest subtree, now within $S\circ_i T$, with its rightmost leaf at $k=j+n-1$.
   Otherwise, it will have $T$ attached to it, thus acquiring $n$ extra leaves while losing one as a grafting point. This yields the above formula for~$u_j'=w_{j+n-1}$.
   \begin{center}
  \begin{figure}[H]  
  {
  \resizebox{0.25\textwidth}{!} {\begin{tikzpicture}[scale=0.6,every node/.style={circle, inner sep=2pt}]
\def\sidelength{7}
\def\sf{5}
\coordinate (A1) at (0,0);
\coordinate (B1) at (\sidelength/2,3);
\coordinate (C1) at (\sidelength,0);
\coordinate (M1) at (\sidelength/2,2);

\coordinate (A2) at (-0.5, -3);
\coordinate (B2) at (-0.5+\sidelength/2*0.7, 0);
\coordinate (C2) at (-0.5+\sidelength*0.7,-3);
\coordinate (M2) at (-0.5+\sidelength/2*0.7,-1.4);

\coordinate (A3) at (3, 0);
\coordinate (B3) at (3+\sidelength/\sf, 6/\sf);
\coordinate (C3) at (3+2*\sidelength/\sf, 0);
\coordinate (M3) at (3+\sidelength/\sf,0.5);

\draw (A1) -- (B1) -- (C1) -- cycle;
\node at (M1) {$S$};

\draw (A2) -- (B2) -- (C2) -- cycle;
\node at (M2) {$T$};    

\draw[fill=red!30] (A3) -- (B3) -- (C3) -- cycle;
\node at (M3) {$S''$};    

\node[draw, fill=cyan!40, circle] at (B2) {$i$};
\node[draw, fill=red!40, circle] at (C3) {$k$};
\end{tikzpicture}}
  }
  \hspace{0.5cm}
    {
  \resizebox{0.25\textwidth}{!} {\begin{tikzpicture}[scale=0.6,every node/.style={circle, inner sep=2pt}]
\def\sidelength{7}
\def\sf{5}
\coordinate (A1) at (0,0);
\coordinate (B1) at (\sidelength/2,3);
\coordinate (C1) at (\sidelength,0);
\coordinate (M1) at (\sidelength/2,2);

\coordinate (A2) at (-0.5, -3);
\coordinate (B2) at (-0.5+\sidelength/2*0.7, 0);
\coordinate (C2) at (-0.5+\sidelength*0.7,-3);
\coordinate (M2) at (-0.5+\sidelength/2*0.7,-1.4);

\coordinate (A3) at (1.2, 0);
\coordinate (B3) at (1.2+\sidelength/\sf, 6/\sf);
\coordinate (C3) at (1.2+2*\sidelength/\sf, 0);
\coordinate (M3) at (1.2+\sidelength/\sf,0.5);

\draw (A1) -- (B1) -- (C1) -- cycle;
\node at (M1) {$S$};

\draw[fill=red!30] (A2) -- (B2) -- (C2) -- cycle;
\node at (M2) {$T$};    

\draw[fill=red!30] (A3) -- (B3) -- (C3) -- cycle;
\node at (M3) {$S''$};    

\node[draw, fill=cyan!40, circle] at (B2) {$i$};
\node[draw, fill=red!40, circle] at (C3) {$k$};
\end{tikzpicture}}
  }
  \caption*{\small{(D)} $k\geq i+n$}
  \end{figure}
  \end{center}
 \end{proof}

  The set $W(n)$ carries the natural product order inherited from $\mathbb{N}^n$. It agrees with the partial order induced by the left rotations of binary trees:
  \begin{lemma}
   Let a binary tree $T''$ be obtained from $T'$ by performing a rotation on some subtree $S$ in $T'$. Then $w_{T'}\leq w_{T''}$ coordinate-wise. 
  \end{lemma}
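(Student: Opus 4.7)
My approach is to use the algorithmic characterization of the weight sequence: label each internal vertex $v$ of a tree by $\ell(v)$, the number of leaves of its left subtree; then the in-order traversal of these labels yields precisely $(w_T(1), \dots, w_T(n-1))$, because the in-order position of $v$ is exactly the rightmost leaf of its left subtree, and that left subtree is the largest subtree of $T$ admitting that leaf as its rightmost one.

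I would fix notation for the rotation: write $S = (A, (B, C))$ for the subtree of $T'$ on which the rotation is performed, so that its image in $T''$ is $S' = ((A, B), C)$. Let $|A|, |B|, |C|$ denote the leaf counts, $L = |A| + |B| + |C|$, and let $s, s+1, \dots, s+L-1$ be the positions of the leaves of $S$ (equivalently of $S'$) in the ambient tree. For any leaf $i$ outside $\{s, \dots, s+L-2\}$ the identifying vertex lies either entirely outside the rotated region, where $T'$ and $T''$ agree, or strictly above the top of $S$/$S'$, in which case its left subtree contains all of $S$/$S'$ and its size depends only on $L$ and on the shared exterior structure; hence $w_{T'}(i) = w_{T''}(i)$ for all such $i$, including $i = s+L-1$ and $i = n$.

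For $s \leq i \leq s+L-2$ the identifying vertex lies inside the rotated region, so $w_{T'}(i)$ and $w_{T''}(i)$ are the corresponding entries of the local in-order sequences of $S$ and $S'$. The only vertices outside $A, B, C$ are the top two: in $S$ the root carries label $|A|$ and the root of $(B, C)$ carries label $|B|$, while in $S'$ the root carries label $|A|+|B|$ and the root of $(A, B)$ carries label $|A|$. The in-order traversals therefore produce
\[
w_S = (\,[\text{in-order of } A],\ |A|,\ [\text{in-order of } B],\ |B|,\ [\text{in-order of } C]\,)
\]
and
\[
w_{S'} = (\,[\text{in-order of } A],\ |A|,\ [\text{in-order of } B],\ |A|+|B|,\ [\text{in-order of } C]\,),
\]
which are aligned position by position because $A$, $B$, $C$ contribute $|A|-1$, $|B|-1$, $|C|-1$ entries in both orderings. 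The two sequences thus agree everywhere except at the slot of local index $|A|+|B|$, where $|B|$ is replaced by $|A|+|B|$.

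Since $|A| \geq 1$, that single entry strictly increases while every other entry is preserved, and combining with the previous paragraph gives $w_{T'} \leq w_{T''}$ coordinate-wise. The one item requiring care is the bookkeeping that pins down the differing slot to the same position in both sequences; this is immediate from the contribution counts of $A$, $B$, $C$, so I do not anticipate any substantial obstacle beyond this routine case analysis.
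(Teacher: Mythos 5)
Your proposal is correct and follows essentially the same route as the paper: both read the weight sequence off the in-order traversal of the left-subtree-size labels, observe that entries outside the rotated region are unchanged, and identify the single differing slot where $|B|$ (the paper's $b$) is replaced by $|A|+|B|$ (the paper's $a+b$). Your explicit bookkeeping of why the differing slot aligns in both sequences is a slightly more careful rendering of the same argument.
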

  \begin{proof}
  Consider a rotation on $S$:
  \begin{figure}[H]
 \begin{tikzpicture}[every node/.style={circle, minimum size=0.8cm, inner sep=0pt}, sibling distance=2cm, level distance=0.8cm]
    \node (A) at (0, 0) [fill=red!40, draw]{$a$}    
    child {node[circle, dashed, draw] {A}}
    child {
    node [fill=red!40, draw] {$b$}
    child {node [circle, dashed, draw] {B}}
    child {node [circle, dashed, draw]{C}}
    };
    \node (B) at (6, 0) [fill=red!40, draw]{$a+b$}    
    child {
    node [fill=red!40, draw] {$a$}
    child {node [circle, dashed, draw] {A}}
    child {node [circle, dashed, draw]{B}}
    }
    child {node[circle, dashed, draw] {C}}
    ;
    \draw [->] (2.8,-0.7) -- (3.8,-0.7);
    \end{tikzpicture}
\end{figure}
 \noindent
  Here, $a$ and $b$ denote the total number of leaves of the subtrees $A$ and $B$ respectively, and we use our vertex labeling convention. Then $w_{T'}$ is of the form
  \[
  w_{T'}=(\dots, \underline{A}, a, \underline{B}, b, \underline{C}, \dots),
  \]
  where $\underline{A}, \underline{B}, \underline{C}$ 
  denote the subsequences of $w_{T'}$ obtained by running the in-order traversal procedure on the subtrees $A$, $B$ and $C$ respectively. The dots denote the entries of $w_{T'}$ for all the leaves not in $S$. Similarly, according to the in-order traversal on $T''$, the sequence $w_{T''}$ takes the form 
  \[
  w_{T''}=(\dots, \underline{A}, a, \underline{B}, a+b, \underline{C}, \dots),
  \]
  where the dots denote the same entries as before. Indeed, the group of dots on the left denotes the entries that have been obtained prior to reaching $S$ during the traversal, would it be before or after the rotation. The group of dots on the right is also not affected by the rotation, since the total number of leaves in $S$ after the rotation remains the same.
  \end{proof}
  Furthermore, $W(n)$ is closed under taking the coordinate-wise minimum $u\wedge v$ of two weight sequences:
  \begin{lemma}
  Let $(u_1,u_2,\dots,u_n)$ and $(v_1,v_2,\dots,v_n)$
  satisfy condition \eqref{SlopeCond}.
  Then so does the sequence ${w_i:=\min(u_i, v_i)}$ for ${1\leq i\leq n}$.
  \end{lemma}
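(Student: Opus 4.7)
The plan is to verify the two parts of condition \eqref{SlopeCond} directly for the sequence $w_i := \min(u_i, v_i)$. The boundary condition $w_n = n$ is immediate from $u_n = v_n = n$, so the substance of the argument is the slope inequality.

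Fix $i < n$ and $j$ with $i - w_i + 1 \leq j < i$; the goal is $i - j \leq w_i - w_j$, equivalently $w_j - j \leq w_i - i$. By symmetry I may assume the minimum on the left is realized by $u$, that is, $w_i = u_i$. The crucial observation is that shrinking $w_i$ only narrows the range of admissible $j$: since $w_i = u_i$, the hypothesis $i - w_i + 1 \leq j$ reads $i - u_i + 1 \leq j$, which is exactly the range in which $u$'s slope condition applies. Hence $u_j - j \leq u_i - i$. Combining this with the trivial bound $w_j \leq u_j$ gives
\[
w_j - j \leq u_j - j \leq u_i - i = w_i - i,
\]
which rearranges to the desired inequality. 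The case $w_i = v_i$ is identical with the roles of $u$ and $v$ swapped.

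There is no real obstacle here; the argument is a short case split together with the monotonicity of the range $[i - w_i + 1, i-1]$ in $w_i$. The one point to be explicit about is that even though $w_j$ may equal $v_j$ while $w_i = u_i$ (so the minimum is realized by different sequences at different positions), the upper bound $w_j \leq u_j$ still holds by definition of the coordinate-wise minimum, so no compatibility between the two sequences is required.
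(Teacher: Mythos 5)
Your proof is correct and follows essentially the same route as the paper's: both reduce to the case $w_i = u_i$, note that the range of admissible $j$ then coincides with that for the sequence $u$, and conclude from $w_j \leq u_j$. The only cosmetic difference is that the paper splits into the two cases $w_j = u_j$ and $w_j = v_j$, whereas you absorb both into the single bound $w_j \leq u_j$.
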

   \begin{proof}
   Let $i<n$. Without loss of generality, we may assume $w_i=\min(u_i,v_i)=u_i$. 
   Let $j$ be in the range $i-w_i+1\leq j<i$. If $u_j\leq v_j$, then $w_j=u_j$, and $w_i-w_j=u_i-u_j\geq i-j$. Otherwise, we have $w_j=v_j\leq u_j$. Then $w_i-w_j=u_i-v_j\geq u_i-u_j\geq i-j$ again.   
   \end{proof}
   Thus, $W(n)$ is a sub-meet-semilattice of $\mathbb{N}^n$ with the product order. At the same time, $W(n)$ is not closed under the natural join on $\mathbb{N}^n$, that being the coordinate-wise maximum of two sequences.    
  To define, following \cite{Geyer}, the join $u\vee v$, we need to introduce a certain normalization operator on positive integer vectors.
  Namely, let ${u=(u_1,u_2,\dots,u_n)\in \mathbb{N}^n}$.   
  Then, inductively, for each $i$ going from $1$ to $n$, set $z_i$ to be the maximum of $u_i$ and $z_j + i - j$ for $i-u_i< j < i$, whenever $j\geq 1$ applies.  By construction, this yields the smallest, in the sense of the product order on $\mathbb{N}^n$, sequence $h(u):=(z_1,z_2,\dots,z_n)$ that satisfies \eqref{SlopeCond} and $u\leq h(u)$. In particular, if $u$ is a weight sequence of a binary tree, then $h(u)=u$. 

   The map $h$ admits the following geometric interpretation.  
  To any positive integer vector $(u_1,u_2,\dots,u_n)$ we  associate a piecewise-linear path on the $xy$-plane with the nodes at $(i,u_i)$ for $1\leq i\leq n$. Geometrically, condition \eqref{SlopeCond} characterizing weight sequences means that for $1\leq i\leq n$, the line segment connecting the node $(i,u_i)$ to any of the $u_i-1$ nodes $(j,u_j)$ immediately preceding $(i,u_i)$ has the slope $\frac{u_i-u_j}{i-j}\geq 1$. 
  Note that this does not preclude the existence of path segments with the zero or negative slope when $u_i=1$.
  The mapping $u \mapsto h(u)$ amounts to adjusting the path by tracing it from left to right and shifting each node up, when necessary, by the minimal distance ensuring that the slope condition holds. 
  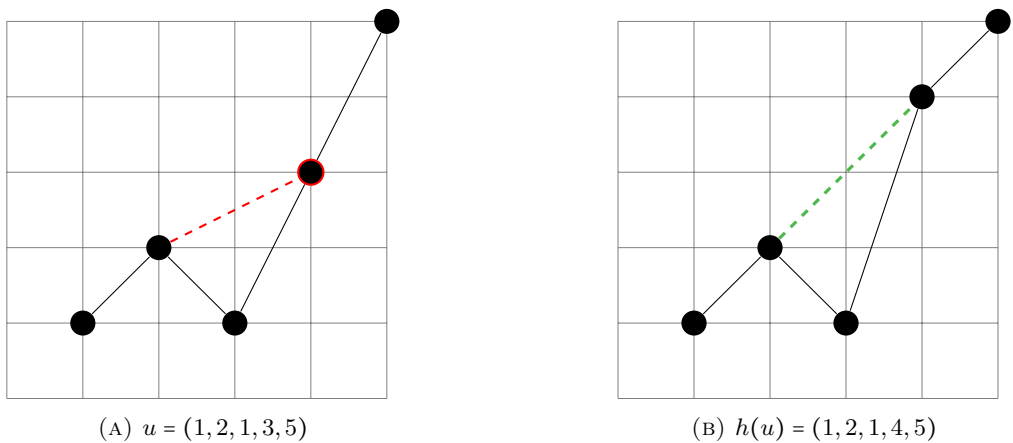
\begin{figure}[H]
   \centering
   \subcaptionbox{$u=(1,2,1,3,5)$}[.48\textwidth]
   {
    \begin{tikzpicture}
    \tikzset{dot/.style={fill=black,circle}}
    \draw[step=1cm,gray,very thin] (0,0) grid (5,5);
    \node[dot] (A1) at (1,1) {};
    \node[dot] (A2) at (2,2) {};
    \node[dot] (A3) at (3,1) {};
    \node[dot, draw=red, thick] (A4) at (4,3) {};
    \node[dot] (A5) at (5,5) {};
    \draw (A1) -- (A2) -- (A3) -- (A4) -- (A5);
    \draw[dashed, red, thick] (A2) -- (A4);
    \end{tikzpicture}
    }
    \subcaptionbox{$h(u)=(1,2,1,4,5)$}[.48\textwidth]
    {
    \begin{tikzpicture}
    \tikzset{dot/.style={fill=black,circle}}
    \draw[step=1cm,gray,very thin] (0,0) grid (5,5);
    \node[dot] (A1) at (1,1) {};
    \node[dot] (A2) at (2,2) {};
    \node[dot] (A3) at (3,1) {};
    \node[dot] (A4) at (4,4) {};
    \node[dot] (A5) at (5,5) {};
    \draw (A1) -- (A2) -- (A3) -- (A4) -- (A5);
    \draw[dashed, green!60!black!70, very thick] (A2) -- (A4);
    \end{tikzpicture}
    }
    \caption{The action of $h$ on a sequence $u$ that is not a weight sequence.}
   \end{figure} 
  
\begin{proposition}
For $n\geq 1$, let 
${L(n):=\{(1,w_2,\dots,w_{n-1},n)|\,1\leq w_i\leq i\}}$.
The collection $L=\{L(n)\}_{n\geq 1}$ is closed under the partial compositions \eqref{WPartComp}.
Furthermore, for $m,n\geq 1$, $a\in L(m)$, $b\in L(n)$ and $1\leq i\leq m$, we have $h(a\circ_i b)=h(a)\circ_i h(b)$

\end{proposition}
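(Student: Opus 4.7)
Write $w := a \circ_i b$ and $w' := h(a) \circ_i h(b)$.

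For \emph{closure}, I would substitute \eqref{WPartComp} and check block by block, through the four ranges $k \in [1, i-1]$, $[i, i+n-2]$, $\{i+n-1\}$, $[i+n, m+n-1]$, that $1 \leq w_k \leq k$. The bounds come from $a \in L(m)$ and $b \in L(n)$; the only nontrivial checks are the bridging entry $n + a_i - 1 \leq i + n - 1$ (using $a_i \leq i$) and the right-hand block entries, which satisfy the bound in either of the two cases defining $u_j'$.

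For the identity $h(w) = w'$ I would prove two inequalities. The direction $h(w) \leq w'$ follows from monotonicity: $\circ_i$ is coordinate-wise monotone in each argument (by a short case-analysis on the condition $a_j \geq j - i + 1$ in the definition of $u_j'$), so $w \leq w'$; since $w'$ is already a weight sequence by Proposition~\ref{WScomp}, applying $h$ and using its idempotence gives $h(w) \leq h(w') = w'$.

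The reverse $h(w) \geq w'$ I would prove in the stronger form that \emph{any} weight sequence $W'$ with $W' \geq w$ already dominates $w'$ coordinate-wise, and then specialise to $W' = h(w)$. This I would verify by induction on $k$ handling the four blocks in turn. For $k \leq i - 1$, the restriction $W'|_{[1,k]}$ is a slope sequence dominating $a|_{[1,k]}$, so minimality of $h$ yields $W'_k \geq h(a)_k$; an analogous shifted-restriction argument (using that $W'_{i + \ell - 1}$ forms a slope sequence of length $n - 1$ dominating $(b_1, \dots, b_{n-1})$) gives $W'_{i + \ell - 1} \geq h(b)_\ell$ for $\ell \leq n - 1$. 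For $k = i + n - 1$, the slope condition on $W'$ at $k$ combined with the preceding bounds on $W'_{j'}$ for $j' \in [i - a_i + 1, i - 1]$ and the lower bound $W'_k \geq w_k = n + a_i - 1$ yields, upon maximising in $j'$ and including $a_i$, $W'_k \geq (n-1) + h(a)_i = w'_{i+n-1}$.

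The \emph{main obstacle} is the last region $k = j + n - 1$ with $j \geq i + 1$. There I would expand $h(a)_j = \max(a_j, \max_{j^* \in [j - a_j + 1, j - 1]} h(a)_{j^*} + (j - j^*))$ and, for each maximizer $j^*$, select a matching index in the slope condition on $W'$: $j' = j^*$ if $j^* \leq i - 1$, $j' = i + n - 1$ if $j^* = i$, and $j' = j^* + n - 1$ (invoking the inductive hypothesis $W'_{j^* + n - 1} \geq w'_{j^* + n - 1}$) if $j^* \in [i + 1, j - 1]$. The delicate point is the interplay with the case split defining $u_j'$: in the lifted case $h(a)_j \geq j - i + 1$, any unlifted maximizer $j^*$ (i.e.\ one with $h(a)_{j^*} < j^* - i + 1$) would force $h(a)_j < j - i + 1$, so the actual maximizers are automatically lifted and the inductive bound $W'_{j^* + n - 1} \geq h(a)_{j^*} + (n-1)$ closes the estimate; in the unlifted case $h(a)_j < j - i + 1$, every maximizer is itself unlifted, the modifier in $u_j'$ is trivial, and the slope condition directly yields $W'_k \geq h(a)_j = w'_k$. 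In each subcase one must also verify that the chosen $j'$ lies in the slope-condition range $[k - W'_k + 1, k - 1]$, which follows from $j^* \in [j - a_j + 1, j - 1]$ together with $W'_k \geq w_k$.
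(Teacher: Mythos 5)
Your argument is correct, and it reaches the conclusion by a genuinely different route from the paper. The paper proves the identity $h(a\circ_i b)=h(a)\circ_i h(b)$ by a single direct computation: it unwinds the recursive definition of $h$ on $w=a\circ_i b$ entry by entry (with the same four index blocks you use) and shows at each step that the maximum defining $h(w)_k$ collapses to the corresponding entry of $h(a)\circ_i h(b)$, discarding the $v$-terms and absorbing the shift $n-1$ along the way. You instead split the equality into two inequalities. Your upper bound $h(w)\leq h(a)\circ_i h(b)$ is obtained softly, from three facts the paper has already established or that are immediate: coordinatewise monotonicity of the formula \eqref{WPartComp} in each argument, the fact that $h(a)\circ_i h(b)$ is a genuine weight sequence (closure of $W$ under grafting, Proposition~\ref{WScomp}), and the characterization of $h(u)$ as the least sequence satisfying \eqref{SlopeCond} above $u$. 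This direction costs you essentially nothing, which is a real economy over the paper's two-sided entry-by-entry evaluation. Your lower bound, phrased as the stronger claim that every weight sequence dominating $w$ dominates $h(a)\circ_i h(b)$, still requires an induction on $k$ with the same lifted/unlifted case split on $a_j\geq j-i+1$ that drives the paper's Cases 1 and 2; the index-matching you describe ($j'=j^*$, $j'=i+n-1$, or $j'=j^*+n-1$ according to the position of a maximizer $j^*$ in the recursion for $h(a)_j$) is sound, and the range checks close because $j^*\geq j-a_j+1$ forces $j^*>i$ precisely in the subcase where $W'_k$ is only bounded below by $a_j$ rather than $a_j+n-1$. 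So the two proofs share the combinatorial core in one direction, but your formulation makes the minimality of $h$ do half the work and is arguably more conceptual; the paper's version is more self-contained in that it never needs to know in advance that $h(a)\circ_i h(b)$ lies in $W(m+n-1)$.
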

\begin{proof}
Let $a\in L(m)$ and $b\in L(n)$ for some $m,n\geq 1$.
Observe that since~$b_n=n$, formula \eqref{WPartComp} can be written in a more uniform way as
\begin{align}
\label{WPartComp2}
w=a \circ_i b:= (a_1,\dots, a_{i-1}, {b_1,\dots, b_{n-1}}, a_i',a_{i+1}',\dots, a_{m}'),
\end{align}
where, as before, $a_j'=a_j+n-1$ if $a_j\geq j-i+1$ and it equals $a_j$ otherwise.
We check that the defining conditions of $L(m+n-1)$ hold for $w$. Indeed, $w_1=a_1=1$, and $w_{m+n-1}=a_m'=m+n-1$, since $a_m=m$. 
Moreover, to verify that $w_k\leq k$, note that for $1\leq k\leq i-1$, $w_k=a_k\leq k$;
for $i\leq k\leq i+n-2$, $w_k=b_{k-i+1}\leq k-i+1\leq k$; finally, for $k\geq i+n-1$, ${w_k=a_{k-n+1}'\leq a_{k-n+1}+n-1\leq (k-n+1)+n-1=k}$.

Now, let $h(a)=(u_1,u_2,\dots,u_m)$, $h(b)=(v_1,v_2,\dots,v_n)$. 
We seek to show that
\begin{align}
\label{hComp}
{h(a)\circ_i h(b)}:={(u_1,\dots,u_{i-1},v_1,\dots,v_{n-1},u_i',\dots,u_{m+n-1}')}
\end{align}
is equal to $h(a\circ_i b)$.
To this end, let $w=a\circ_i b$ be as above.
By the inductive construction of $h$, the $k$-th entry of $h(w)$ depends only on $w_k$ and the $w_k-1$ preceding entries of $h(w)$ with the indices $k-w_k+1,\dots,k-1$.
In particular, evaluating $h$ on the first $i-1$ entries of \eqref{WPartComp2} is the same as evaluating $h$ on the first $i-1$ entries of $a$ yielding $h(w)_k=u_k$ for all $k\leq i-1$.

Similarly, since $b_j\leq j$ for all $1\leq j\leq n$, then for $k$ in the range $i\leq k\leq i+n-2$, the $k$-th entry of $h(w)$ depends only on the entries of $w$ with the indices in the range $i,i+1,\dots,k-1$ at most. These are equal to 
$b_1,b_2,\dots, b_{k-i+1}$ respectively. Hence, evaluating $h(w)$ in this range is the same as evaluating $h$ on the first $k-i+1$ entries of $b$ yielding 
$h(w)_k=v_{k-i+1}$ for all $i\leq k\leq i+n-2$.

To handle the remaining case $k\geq i+n-1$, we proceed by induction.
For the base case $k=i+n-1$, we have ${w_k=a_i'=a_i+n-1}$, since $a_i\geq 1$. 
As established above, the $w_k-1$ terms preceding the $k$-th entry of $h(w)$ are 
$u_{i-a_i+1},\dots, u_{i-1},v_1,\dots,v_{n-1}$. 
By construction of $h$, the $k$-th entry of $h(w)$ is the greatest of
\[
u_{i-a_i+1}+k-(i-a_i+1), \dots, u_{i-1}+k-(i-1),v_1+n-1,\dots,v_{n-1}+1
\]
and $a_i+n-1$. Since $v_j+(n-j)\leq n$ for all $j$, and the greatest value is at least $a_i+n-1\geq n$, the last $n-1$ entries in the list above can be dropped. Thus, the greatest value is to be found among
\[
u_{i-a_i+1}+(a_i-1)+(n-1),\dots, u_{i-1}+1+(n-1)
\]
and $a_i+n-1$. By definition of $h(a)$, this value is equal to $u_i+(n-1)$, or equivalently, $u_i'$, since $u_i\geq 1$. That is, the $k$-th entry of $h(w)$ matches the $k$-th entry of $h(a)\circ_i h(b)$, as desired.

We consider now $k>i+n-1$ under the inductive assumption that the first $k-1$ entries of $h(w)$ match coordinate-wise the first $k-1$ entries of \eqref{hComp}.
Let $j=k-n+1$. Then $w_k=a_j'$, and there are two cases to consider: $a_j<j-i+1$ and $a_j\geq j-i+1$. 

\textit{Case 1.} If $a_j<j-i+1$, then $a_j'=a_j$. Hence, invoking the inductive assumption, the $a_j-1$ entries preceding the $k$-th entry of $h(w)$ are
\[
u_{j-a_j+1}',u_{j-a_j+2}',\dots, u_{j-1}'.
\]
Then $h(w)_k$ is equal to the greatest value among
\begin{align}
\label{u_primes}
u_{j-a_j+1}'+a_j-1,u_{j-a_j+2}'+a_j-2,\dots, u_{j-1}'+1
\end{align}
and $a_j$. If for all $j-a_j+1\leq t \leq j-1$, $u_{t}<t-i+1$, then $u_t'=u_t$ for all such $t$, and the $k$-th entry of $h(w)$ is the greatest value among
\[
u_{j-a_j+1}+a_j-1,u_{j-a_j+2}+a_j-2,\dots, u_{j-1}+1
\]
and $a_j$. By definition of $h(a)=(u_1,u_2,\dots,u_{m})$, this is the same as $u_j$. Due to 
$u_{t}+(j-t)<t-i+1+(j-t)=j-i+1$, we know that $u_j<j-i+1$. Hence $u_j=u_j'$ establishing that $h(w)_k=u_j'$, as needed.

Now, if $u_{t}\geq t-i+1$ for some $j-a_j+1\leq t \leq j-1$, then $u_t'=u_t+(n-1)$, and the greatest entry of \eqref{u_primes} is guaranteed to be of the form $u_t'+(j-t)=(u_t+j-t)+(n-1)$.
By definition of $h(a)$, this is $u_j+(n-1)$, which in its turn is equal to $u_j'$ yielding $h(w)_k=u_j'$.

\textit{Case 2.} Assume now that $a_j\geq j-i+1$. By the inductive assumption, the $w_k-1$ entries of $h(w)$ preceding the $k$-th one are
\[
 u_{n-a_j+j},\dots, u_{i-1},v_1,\dots,v_{n-1},u_i',\dots u_{j-1}',
\]
since $w_k=a_j'=a_j+n-1$.
Then, by the same argument as in the base case, the $v$-entries can be dropped off, and $h(w)_k$ is equal to the greatest value among
\begin{align}
\label{u_collected}
u_{n-a_j+j}+(a_j-n)+(n-1),\dots, u_{i-1}+(j-i+1)+(n-1), u_{i}'+(j-i),\dots u_{j-2}'+2,\dots u_{j-1}'+1
\end{align}
and $a_j+n-1$.
If there is any term $u_t'$ in this list such that $u_t'=u_t$, that is, if $u_t<t-i+1$, then 
$$u_t'+(j-t)=u_t+(j-t)<(t-i+1)+(j-t)=j-i+1\leq a_j.$$ 
In particular, since this implies $u_t+(n-1)+(j-t)< a_j+n-1$, we can replace $u_t'$ by $u_t+(n-1)$ in \eqref{u_collected} without any loss of generality, because the greatest element we are after is at least $a_j+n-1$. Therefore, the greatest entry among \eqref{u_collected} and $a_j+n-1$ is the one among
\begin{align}
u_{n-a_j+j}+(a_j-n)+(n-1),\dots, u_{j-1}+1+(n-1)
\end{align}
and $a_j+(n-1)$. By definition of $h(a)=(u_1,\dots,u_m)$, this equals $u_j+(n-1)$ or, equivalently, $u_j'$, since $u_j\geq a_j\geq j-i+1$.

\end{proof}
   
The join of weight sequences $u,v\in W(n)$
is defined by applying $h$ to the join of $u$ and $v$ in the ambient lattice $L(n)$, that is, $u\vee v:=h((\max(u_1,v_1),\max(u_2,v_2),\dots,\max(u_n,v_n)))$.  
The fact that it indeed defines a join on $W(n)$ follows from the following
\begin{lemma}
Let $(L, \vee_L)$ be a (join-)semilattice and $K \subset L$ be a subposet of $L$. If $h: L\to L$ is an order-preserving map such that $h|_{K}=id_K$ and $h(L)=K$, then $x \vee_K y:=h(x \vee_L y)$, for $x, y\in K$, defines a join-semilattice structure on $K$.
\end{lemma}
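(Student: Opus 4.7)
The plan is to verify directly that $x\vee_K y := h(x\vee_L y)$ produces the least upper bound of $x$ and $y$ within $K$, since once binary least upper bounds exist in a poset they automatically assemble into a join-semilattice structure (commutativity, idempotency and associativity follow from the universal property of the least upper bound of a finite subset).

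First, I would observe that $h(x\vee_L y)\in K$ holds by the hypothesis $h(L)=K$, so the operation is well-defined on $K$. Next, to show $h(x\vee_L y)$ is an upper bound of $x$ and $y$ in $K$, note that $x,y\leq x\vee_L y$ in $L$; applying the order-preserving map $h$ and using $h|_K=\id_K$ gives $x=h(x)\leq h(x\vee_L y)$ and $y=h(y)\leq h(x\vee_L y)$. Since $K$ is a subposet of $L$, these inequalities hold within $K$.

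For the least upper bound property, suppose $z\in K$ satisfies $x\leq z$ and $y\leq z$ in $K$. As $K$ is a subposet, the same inequalities hold in $L$, so $x\vee_L y\leq z$ by the defining property of $\vee_L$. Applying $h$, order-preservation yields $h(x\vee_L y)\leq h(z)=z$, the last equality again by $h|_K=\id_K$. Thus $h(x\vee_L y)$ is the least upper bound of $\{x,y\}$ in $K$.

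Finally, I would conclude the semilattice axioms formally: commutativity of $\vee_K$ reduces to that of $\vee_L$; idempotency follows from $h(x\vee_L x)=h(x)=x$; and associativity follows because both $(x\vee_K y)\vee_K z$ and $x\vee_K(y\vee_K z)$ coincide with the (unique) least upper bound of $\{x,y,z\}$ in $K$, obtained by iterating the binary case just established. There is no substantial obstacle here — the argument is purely formal and the only delicate point is to make simultaneous use of all three hypotheses on $h$ (monotonicity in step two and three, $h|_K=\id_K$ in all three steps, and $h(L)=K$ in step one) to ensure $\vee_K$ lands in $K$ and computes the correct universal object.
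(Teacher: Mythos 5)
Your proof is correct and follows essentially the same route as the paper: show $h(x\vee_L y)$ is an upper bound via monotonicity of $h$ and $h|_K=\mathrm{id}_K$, then show it is least by passing a competing upper bound $z\in K$ through the subposet inclusion and applying $h$ again. The only additions beyond the paper's argument are the explicit remark that $h(L)=K$ guarantees the operation lands in $K$ and the routine verification that binary least upper bounds yield the semilattice axioms, both of which the paper leaves implicit.
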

\begin{proof}
Let $x,y\in K$. First, we show that $x \vee_K y=h(x \vee_L y)$ is an upper bound for $x$ and $y$ in $K$. Indeed, since $x \vee_L y \geq x$ and $x \vee_L y \geq y$ in $L$, then by monotonicity of $h$, $x\vee_K y=h(x \vee_L y) \geq h(x)=x,h(y)=y$.    
Now, let $z\in K$ be such that $z \geq x, y$ in $K$. 
Since $K$ is a subposet of $L$, then $z\geq x\vee_L y$ in $L$. 
Thus, $z=h(z)\geq h(x\vee_L y)=x\vee_K y$, establishing that $x\vee_K y$ is indeed the smallest common upper bound of $x,y\in K$.
\end{proof}  
In our case, we take $L$ to be the lattice $L(n)$ with with the coordinate-wise maximum as the join, ${K=W(n)}$ and the closure map $h$ is as inductively defined above.

\begin{theorem}[\cite{pallo}]
The set $W(n)$ with the meet and join defined as above is isomorphic to the Tamari lattice $T_{n-1}$.
\end{theorem}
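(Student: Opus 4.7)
The plan is to verify that the established bijection $T \mapsto w_T$ between $PBT(n)$ and $W(n)$ is an isomorphism of posets, whence by general nonsense it automatically transfers the lattice structure from $T_{n-1}$ (which is known to be a lattice) to $W(n)$ equipped with the operations defined above. Since the Tamari order is the transitive closure of left rotations, it suffices to verify order-preservation in both directions separately.

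For the direction $T' \leq T'' \Rightarrow w_{T'} \leq w_{T''}$, I would just invoke the rotation lemma already established above: a single rotation changes the weight sequence by increasing exactly one entry (from $b$ to $a+b$, in the notation of that lemma) and leaves all the others intact; transitivity handles the rest. For the converse $w_{T'} \leq w_{T''} \Rightarrow T' \leq T''$, I would induct on $\Delta(T',T'') := \sum_{i=1}^n (w_{T''}(i) - w_{T'}(i))$. The base case $\Delta = 0$ is immediate from injectivity of the weight sequence assignment. For the inductive step, I would locate the smallest index $k$ at which $w_{T'}(k) < w_{T''}(k)$; the tree $T'$ then has a distinguished subtree $B$ of $b := w_{T'}(k)$ leaves with leaf $k$ as its rightmost leaf, and by minimality of $k$ together with the constraints \eqref{SlopeCond} applied at position $k$ in both sequences, $B$ must be the right child of a node whose left child $A$ consists of leaves immediately preceding $B$. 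Performing the left rotation at this node produces a tree $\widetilde{T}'$ with $w_{\widetilde{T}'}(k) = a + b$ and all other entries unchanged.

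The main obstacle is ensuring that the new entry $a + b$ does not overshoot $w_{T''}(k)$, i.e., that $\widetilde{T}'$ still satisfies $w_{\widetilde{T}'} \leq w_{T''}$ coordinate-wise. This is where the geometric interpretation of \eqref{SlopeCond} as a slope condition becomes useful: applied at index $k$ for $w_{T''}$, together with the equality $w_{T'}(j) = w_{T''}(j)$ for all $j < k$ forced by minimality of $k$, it pins down $a$ and forces $a + b \leq w_{T''}(k)$. With the rotation shown not to overshoot, one has $\Delta(\widetilde{T}', T'') < \Delta(T', T'')$ and the inductive hypothesis delivers $\widetilde{T}' \leq T''$, hence $T' \leq T''$ via the rotation $T' \to \widetilde{T}'$.

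Finally, once the bijection is an order isomorphism, the meet on $W(n)$ computed coordinate-wise coincides with the Tamari meet since the earlier lemma shows $W(n)$ is closed under coordinate-wise minimum and this yields the greatest lower bound; the join corresponds via the same isomorphism to $h$ applied to the coordinate-wise maximum, as per the semilattice lemma preceding the theorem. This completes the identification of $W(n)$ with $T_{n-1}$ as lattices.
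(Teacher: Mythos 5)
The paper does not actually prove this statement — it is quoted from Pallo's work, with the preceding lemmas (monotonicity of rotations, closure of $W(n)$ under coordinate-wise minimum, the join via the closure map $h$) serving only as supporting material — so there is no in-paper proof to compare routes with, and I assess your argument on its own terms. The first direction ($T'\leq T''$ implies $w_{T'}\leq w_{T''}$), the base case of your induction, and the closing identification of the meet and join are all fine.

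The inductive step of the converse direction has a genuine gap. You claim that at the \emph{smallest} index $k$ with $w_{T'}(k)<w_{T''}(k)$, the tree $T'$ admits a left rotation raising precisely the $k$-th entry. (As written the local picture is also garbled: the largest subtree $B$ with rightmost leaf $k$ is necessarily the \emph{left} child of its parent $s$, and what you actually need is that $s$ is the \emph{right} child of a node $u$ whose left child $A$ occupies the leaves immediately preceding $B$; the rotation is then performed at $u$.) This structural claim is false, and neither minimality of $k$ nor condition \eqref{SlopeCond} rescues it. Take $n=5$ and let $T'$ be the tree whose root has leaf $1$ as its left child and the balanced tree on leaves $2,3,4,5$ as its right child, so $w_{T'}=(1,1,2,1,5)$; let $T''$ be the left comb, $w_{T''}=(1,2,3,4,5)$, so $k=2$. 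The only two left rotations applicable to $T'$ yield the weight sequences $(1,1,3,1,5)$ and $(1,1,2,3,5)$: no rotation touches the second entry, because the parent of leaf $2$ is itself a left child, and nothing in the values $w_{T'}(j)$, $w_{T''}(j)$ for $j\leq k$ constrains whether that parent is a left or a right child. Your induction therefore stalls at the first step even though $T'\leq T''$ does hold (both available rotations stay below $w_{T''}$, so the conclusion is reachable — just not by your selection rule). A correct argument must establish the existence of \emph{some} rotation $T'\to\widetilde T'$ with $w_{T'}<w_{\widetilde T'}\leq w_{T''}$, which requires choosing the rotation site more carefully than "smallest disagreement". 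The overshoot estimate $a+b\leq w_{T''}(k)$ that you derive from \eqref{SlopeCond} is correct whenever the required configuration exists, but that existence is exactly the missing ingredient.
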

 An example below is a rendition of a Tamari lattice in terms of weight sequences.
 \begin{center}
 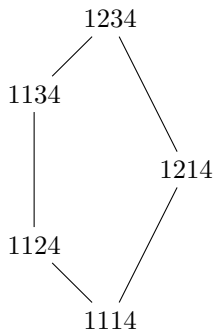
\begin{figure}[H]
  {
   \begin{tikzpicture}
    \node (T0) at (0,0) {$1234$};
    \node (T1) at (-1,-1) {$1134$};
    \node (T2) at (1,-2) {$1214$};
    \node (T3) at (-1,-3) {$1124$};
    \node (T5) at (0,-4) {$1114$};
    \draw (T0) -- (T1) -- (T3) -- (T5) -- (T2);
    \draw (T0) -- (T2);
    \end{tikzpicture}
  }   
  \caption{The Tamari lattice $T_3\simeq W(4)$.}
  \end{figure}
 \end{center} 
   We show that the operad structure on $W$ is compatible with the Tamari order. 
  \begin{theorem}
  \label{WPropMain}
   The partial compositions for the weight sequences as defined in Proposition \ref{WScomp} respect the lattice structures on $W(n)$'s. That is, $PBT\simeq W$ is a lattice operad with respect to the Tamari order.
  \end{theorem}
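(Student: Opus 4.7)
The plan is to invoke Lemma~\ref{WeakDist}. Monotonicity of the partial compositions in each argument with respect to the Tamari order has already been observed (a rotation commutes with grafting and, by the preceding lemma on weight sequences, rotations are coordinate-wise comparable). It therefore remains to verify the two distributivity inequalities \eqref{LatOpIneqMeet} and \eqref{LatOpIneqJoin}, and I shall in fact establish both as equalities.

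The key idea is to transfer the question to the ambient lattice $L(n)$ introduced above, where meets and joins are coordinate-wise, and then use the identity $h(a\circ_i b)=h(a)\circ_i h(b)$ proved in the previous proposition. Accordingly, my first step is to check that for all $p,r\in L(m)$, $q,s\in L(n)$ and $1\leq i\leq m$,
\[
(p\wedge_L r)\circ_i (q\wedge_L s)=(p\circ_i q)\wedge_L (r\circ_i s),\qquad (p\vee_L r)\circ_i (q\vee_L s)=(p\circ_i q)\vee_L (r\circ_i s)
\]
holds coordinate-wise. This is a routine case analysis following the four regimes in formula \eqref{WPartComp}: for $k<i$ and for $i\leq k\leq i+n-2$ both sides just copy a single entry from one of the arguments; for $k=i+n-1$ one uses $q_n=s_n=n$ to pull the common summand $n-1$ out of the meet/join; and for $k\geq i+n$, with $j=k-n+1$, one verifies by a short case distinction on whether $p_j$ and $r_j$ lie above or below the threshold $j-i+1$ that the piecewise rule defining $u_j'$ commutes with coordinate-wise $\min$ and $\max$.

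Next, I pass from $L$ to $W$. For meets this is immediate: the meet in $W$ coincides with the coordinate-wise minimum inherited from $L$, and $W$ is closed under it, so the identity in $L$ descends verbatim to
\[
(p\wedge r)\circ_i (q\wedge s)=(p\circ_i q)\wedge (r\circ_i s)
\]
for all $p,r\in W(m)$, $q,s\in W(n)$. For joins, recall that the join in $W$ is defined by $u\vee v:=h(u\vee_L v)$. Combining the multiplicativity $h(a)\circ_i h(b)=h(a\circ_i b)$ with the coordinate-wise distributivity in $L$ and with $h(p)=p$, $h(q)=q$, $h(r)=r$, $h(s)=s$, one obtains the chain
\[
(p\vee r)\circ_i(q\vee s)=h(p\vee_L r)\circ_i h(q\vee_L s)=h\bigl((p\vee_L r)\circ_i(q\vee_L s)\bigr)=h\bigl((p\circ_i q)\vee_L(r\circ_i s)\bigr)=(p\circ_i q)\vee(r\circ_i s).
\]
Both halves of the hypothesis of Lemma~\ref{WeakDist} therefore hold with equality, and the lemma concludes that $W\simeq PBT$ is a lattice operad with respect to the Tamari order.

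The one delicate point I anticipate is the verification in the fourth regime $k\geq i+n$ of the coordinate-wise identity in $L$: the operator $(\cdot)'$ defined by $u_j':=u_j+n-1$ or $u_j$ according to whether $u_j\geq j-i+1$ switches branches depending on the entry, so one must check in three sub-cases (both $p_j,r_j$ above, both below, or one on each side of the threshold $j-i+1$) that $\min$ and $\max$ commute with this piecewise operation. This is elementary but the only place where care is needed; the rest of the argument is formal once the $L$-level identity is in hand.
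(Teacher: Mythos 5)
Your argument is correct and follows essentially the same route as the paper: both reduce everything to the uniform formula \eqref{WPartComp2}, check that the priming operation commutes with coordinate-wise $\min$/$\max$ by a case analysis around the threshold $j-i+1$, and transfer the join computation from the ambient lattice $L$ back to $W$ via the closure $h$ and the identity $h(a\circ_i b)=h(a)\circ_i h(b)$. The only difference is organizational: you verify the two-argument distributivity $(p\wedge r)\circ_i(q\wedge s)=(p\circ_i q)\wedge(r\circ_i s)$ in one pass (which matches the defining condition \eqref{LatOpDefMeet} directly, making the final appeal to Lemma~\ref{WeakDist} superfluous), whereas the paper treats the two arguments separately.
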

  \begin{proof}
  First, we check that the partial compositions $(-\circ_i-)$ respect the meet in both arguments.
  Indeed, let $a\in W(m)$, $b,c\in W(n)$ and $1\leq i\leq m$ for $m,n\geq 1$.
  Then, by \eqref{WPartComp2},
  \begin{align*}
  a\circ_i (b\wedge c)=(a_1,\dots, a_{i-1}, {\min(b_1,c_1),\dots, \min(b_{n-1}, c_{n-1})}, a_i',a_{i+1}',\dots, a_{m}'),
  \end{align*}
  which is equal to the coordinate-wise minimum of $a\circ_i b$ and 
  $a\circ_i c$ yielding $a\circ_i (b\wedge c)=(a\circ_i b)\wedge (a\circ_i c)$.
  Now, for $1\leq i\leq n$,
  \begin{align}
  \label{JoinCirci}
  (b\wedge c)\circ_i a=(\min(b_1,c_1),\dots, \min(b_{i-1},c_{i-1}), a_1,\dots,a_{m-1},
  \min(b_{i}, c_{i})', \min(b_{n-1}, c_{n-1})'),
  \end{align}
  where $\min(b_j,c_j)'=\min(b_j,c_j)+m-1$ if $\min(b_j,c_j)\geq j-i+1$ and it is $\min(b_j,c_j)$ otherwise. 
  We observe that the equality ${\min(b_j,c_j)'=\min(b_j',c_j')}$ holds in all six possible cases. These are ${j-i+1\leq b_j\leq c_j}$,   ${b_j<j-i+1\leq c_j}$, ${b_j\leq c_j<j-i+1}$ and the same three double inequalities with $b_j$ and $c_j$ switched. 
    Thus, \eqref{JoinCirci} is equal to the coordinate-wise minimum of 
  \[{b\circ_i a=(b_1,\dots,b_{i-1},a_1,\dots,a_m,b_{i}',\dots b_{n}')}, \quad c\circ_i a=(c_1,\dots,c_{i-1},a_1,\dots,a_m,c_{i}',\dots c_{n}'),\]
  that is, to $(b\circ_i a)\wedge (c\circ_i a)$.
  
  We proceed to showing that the partial compositions $(-\circ_i-)$ respect the join. Let $b\vee_L c$ denote the coordinate-wise maximum of $b$ and $c$. It is the join of $b$ and $c$ taken in the lattice $L=L(n)$ rather than in $W(n)$. Then
  \begin{align*}
  (a\circ_i b)\vee_L (a\circ_i c)&=
  (a_1,\dots,a_{i-1},b_1,\dots,b_{n-1},a_i',\dots,a_{m}')\vee_L 
  (a_1,\dots,a_{i-1},c_1,\dots,c_{n-1},a_i',\dots,a_{m}')\\
  &=
  (a_1,\dots,a_{i-1},\max(b_1,c_1),\dots,\max(b_{n-1},c_{n-1}),a_i',\dots,a_{m}')=a\circ_i (b\vee_L c)\in L(m+n-1).
  \end{align*}
  Applying $h$ to both sides, we get
  $h((a\circ_i b)\vee_L (a\circ_i c))=h(a\circ_i (b\vee_L c))$.
  The expression on the left is $(a\circ_i b)\vee (a\circ_i c)$ by definition of join in $W(m+n-1)$. 
  To handle the right-hand side, note that since $a\in W(m)$, then $h(a)=a$. Hence, by Proposition~\ref{WPropMain}, we have
  $h(a\circ_i (b\vee_L c))=h(a)\circ_i h(b\vee_L c)=a\circ_i (b\vee c)$, establishing that the partial compositions respect the join in the second argument.

  Now, let $1\leq i \leq n$. Then
  \begin{align*}
  (b\circ_i a)\vee_L (c\circ_i a)&=
  (b_1,\dots,b_{i-1},a_1,\dots,a_{m-1},b_i',\dots,b_{m}')\vee_L 
  (c_1,\dots,c_{i-1},a_1,\dots,a_{m-1},c_i',\dots,c_{m}')\\
  &=
  (\max(b_1,c_1),\dots,\max(b_{i-1},c_{i-1}),a_1,\dots,a_{m-1},
  \max(b_i',c_i'),\dots,\max(b_{n}',c_{n}'))\\
  &=
  (\max(b_1,c_1),\dots,\max(b_{i-1},c_{i-1}),a_1,\dots,a_{m-1},
  \max(b_i,c_i)',\dots,\max(b_{n},c_{n})')\\
  &=(b\vee_L c)\circ_i a
  \in L(m+n-1),
  \end{align*}
  where, as in the min-case, $\max(b_i,c_i)'=\max(b_i',c_i')$ follows upon direct verification for $j-i+1\leq b_j\leq c_j$,   $b_j<j-i+1\leq c_j$,
  $b_j\leq c_j<j-i+1$.
  Applying $h$ to both sides, we get $(b\circ_i a)\vee (c\circ_i a)=(b\vee c)\circ_i a$.
  \end{proof}
  \end{example}

\begin{example}
\label{YoungLatEx}
Generalizing example~\ref{LatOpExMZ}, we note that if $A$ is a totally ordered semigroup or, more generally, a partially ordered semigroup, such that $A$ is a lattice, then the word operad $\W A$ has a natural lattice operad structure with respect to the product order. Similarly, given a lattice $M$, the $M$-associative operad $\A M$ is a (non-$\Sigma$) lattice operad as well. We use this observation as follows.

Let $M=\mathbb{N}_0$ with its usual total order and let $Part$ be the lattice suboperad of the $M$-associative operad~$\A \mathbb{N}_0$ such that for all $k\geq 2$, $Part(k)$ consists of all the tuples ${(d_1,\dots,d_{k-1})\in \A \mathbb{N}_0(k)}$, where~${d_{k-1}>0}$. 
For each $k\geq 2$, $Part(k)$ can be identified with the set of all integer partitions of length $k-1$. Indeed, to a tuple~$(d_1,d_2,\dots,d_{k-1})$ 
we associate the partition $\lambda_1\geq \lambda_2\geq \dots \geq  \lambda_{k-1}$
of $${n=d_1+2d_2+\dots+(k-1)d_{k-1}},$$
where
\begin{equation}
\label{TupToPart}
\begin{cases}
\lambda_1 &= d_1 + d_2 + \dots + d_{k-2} + d_{k-1}\\
\lambda_2 &= d_2 + d_3 + \dots + d_{k-1}\\
&\dots\\
\lambda_{k-1} &= d_{k-1},
\end{cases} 
\end{equation}
and vice versa.
In terms of this bijective correspondence, the operadic structure of $\A \mathbb{N}_0$ translates into the composition of partitions in $Part$ that reads
\begin{align}
\label{PartComp}
(\lambda_1,\dots,\lambda_{k-1})\circ_i (\mu_1,\dots,\mu_{l-1})=
(\lambda_1+\mu_1, \lambda_2+\mu_1,\dots, 
\underbracket[0.1ex]{\lambda_{i}+\mu_1,\lambda_{i}+\mu_2,\dots, \lambda_{i}+\mu_{l-1}}_{}, \lambda_i,\dots, \lambda_{k-1})
\end{align}
for $1\leq i \leq k-1$ and 
\[
(\lambda_1,\dots,\lambda_{k-1})\circ_{k} (\mu_1,\dots,\mu_{l-1})=
(\lambda_1+\mu_1, \lambda_2+\mu_1,\dots, \lambda_{k-1}+\mu_1,
 \underbracket[0.1ex]{\mu_1, \mu_2,\dots, \mu_{l-1}}_{})
\]
for $i=k$.
 Furthermore, for each $k\geq 2$, the product order on $\A \mathbb{N}_0(k)=\mathbb{N}_0^{(k-1)}$ translates into the partial order on $Part(k)$ by containment of the corresponding Young diagrams. 
 In particular, this endows Young's lattice\footnote{We discard the empty partition.}~$\bigcup\limits_{k\geq 2}Part(k)$ with a lattice operad structure.
 For each $k\geq 2$, $Part(k)$ is a sublattice thereof.
  \begin{center}
  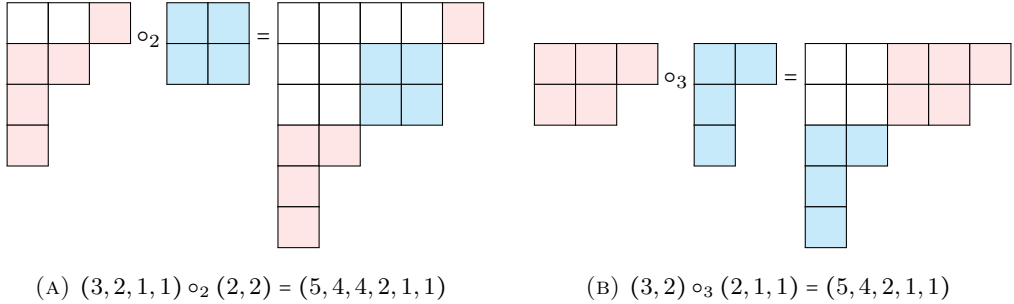
\begin{figure}[H]
  \subcaptionbox{$(3,2,1,1)\circ_2(2,2)=(5,4,4,2,1,1)$}
  {
   $\begin{tikzcd}
    \begin{ytableau}
    {} & {}  & *(red!10) {} \\ 
    *(red!10) {} & *(red!10) {}  \\
    *(red!10) {}  \\
    *(red!10) {}
    \end{ytableau}\circ_2     
    \begin{ytableau}
    *(cyan!20) {}  & *(cyan!20) {} \\ 
    *(cyan!20) {}  & *(cyan!20) {} 
    \end{ytableau}=
    \begin{ytableau}
    {} & {} & {}  & {} & *(red!10) {} \\ 
    {} & {} & *(cyan!20) {} & *(cyan!20) {}  \\
    {} & {} & *(cyan!20) {} & *(cyan!20) {}  \\
    *(red!10) {} & *(red!10) {}  \\
    *(red!10) {}  \\
    *(red!10) {}
    \end{ytableau}
 \end{tikzcd}
 $
  }
  \subcaptionbox{$(3,2)\circ_3(2,1,1)=(5,4,2,1,1)$}
  {
  $\begin{tikzcd}
    \begin{ytableau}
    *(red!10) {} & *(red!10) {}  & *(red!10) {} \\ 
    *(red!10) {} & *(red!10) {}     
    \end{ytableau}\circ_3     
    \begin{ytableau}
    *(cyan!20) {}  & *(cyan!20) {} \\ 
    *(cyan!20) {}  \\
    *(cyan!20) {}   
    \end{ytableau}=
    \begin{ytableau}
    {} & {} & *(red!10) {} & *(red!10) {}  & *(red!10) {} \\ 
    {} & {} & *(red!10) {} & *(red!10) {} \\
    *(cyan!20) {} & *(cyan!20) {}  \\
    *(cyan!20) {}  \\
    *(cyan!20) {}
    \end{ytableau}
 \end{tikzcd}
 $
  }    
  \caption{Evaluating compositions in $Part$.}
  \label{YoungLatEval}
  \end{figure}
  \end{center}
Consider now the bijection that maps a tuple $(d_1,\dots, d_{k-1})$ to the partition conjugate the one defined in~\eqref{TupToPart}. That is, to such a tuple we associate the partition denoted, as usual, by ${1^{d_1}2^{d_2}\dots (k-1)^{d_{k-1}}}$ that consists of $d_i$ parts equal to~$i$ for~$1\leq i\leq k - 1$. 
Here, as before, ${d_{k-1}>0}$. In terms of this correspondence, the partial compositions within~$\A \mathbb{N}_0$ now look as follows:
\[
1^{d_1}2^{d_2}\dots (k-1)^{d_{k-1}}\circ_i 1^{e_1}2^{e_2}\dots (l-1)^{e_{l-1}}
= 1^{d_1}2^{d_2}\dots (i-1)^{d_{i-1}} 
\underbracket[0.1ex]{i^{e_1}(i+1)^{e_2}\dots (i+l-1)^{e_{l-1}}}_{}
(i+l)^{d_i}\dots (k+l-1)^{d_{k-1}}.
\]
for $1\leq i\leq k-1$ and
\[
1^{d_1}2^{d_2}\dots (k-1)^{d_{k-1}}\circ_k 1^{e_1}2^{e_2}\dots (l-1)^{e_{l-1}}
= 1^{d_1}2^{d_2}\dots (k-1)^{d_{k-1}} \underbracket[0.1ex]{k^{e_1}(k+1)^{e_2}\dots (k+l-1)^{e_{l-1}}}_{}
\]
for $i=k$. 
The non-$\Sigma$ operad $Part'$ with these compositions is isomorphic, as a lattice operad, to $Part$
by means of the partition conjugation.
Note that the conjugation, while being an automorphism of the Young's lattice, is not an automorphism of the operad $Part$. Here, $Part'(k)$ is to be interpreted as the set of all integer partitions with the largest part(s) equal to $k-1$.
As an example, the calculation presented in figure~\ref{YoungLatEval}~(A) above takes the following form in~$Part'$:
 \begin{center}
  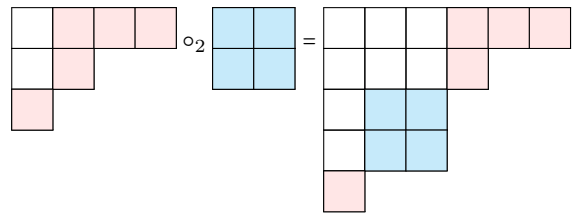
\begin{figure}[H]
  {
   $\begin{tikzcd}
    \ydiagram
    [*(red!10)]{1+3,1+1,1}
    *{4,2,1}\circ_2  
    \ydiagram
    [*(cyan!20)]{2,2}
    *{2,2}
    =
     \ydiagram
    [*(red!10)]{3+3,3+1,0,0,1}
    *[*(cyan!20)]{0,0,1+2,1+2,0}
    *{6,4,3,3,1}
 \end{tikzcd}
 $ 
  }
  \caption{
  Evaluating
  $1^1 2^1 4^1\circ_2 
  2^2=1^1 3^2 4^1 6^1$ in $Part'$.
  }
  \end{figure}
  \end{center}
By means of the correspondence \eqref{TupToPart}, the $M$-associative operad $\A\mathbb{N}$ can be regarded as the operad $Part_d$ of all integer partitions with distinct parts. It is a lattice suboperad of $Part$ and, as any $M$-associative operad, it comes with an involution induced by the tuple reversal $(d_1,d_2,\dots,d_{k-1}) \mapsto (d_{k-1},d_{k-2},\dots,d_{1})$ for all $k\geq 2$. Note that such an involution does not exist in $Part$ due to the defining condtition $d_{k-1}>0$ for~$(d_1,d_2,\dots,d_{k-1})\in Part(k)\subseteq \A \mathbb{N}_0(k)$. 

Explicitly, given a partition $\lambda\in Part_d(k)$ of length $k-1$, the involution $\#:Part_d(k) \to Part_d(k)$ returns its box complement taken in the $k$-by-$l$ box, where $l$ is the largest part of $\lambda$. It is a lattice anti-automorphism and satisfies $(\lambda\circ_i\mu)^\#=\lambda^\#\circ_{k-i+1}\mu^\#$ for all $k,l\geq 2$, $\lambda\in Part_d(k)$, $\mu\in Part_d(l)$ and $1\leq i\leq k$.
\begin{center}
  \begin{figure}[H]
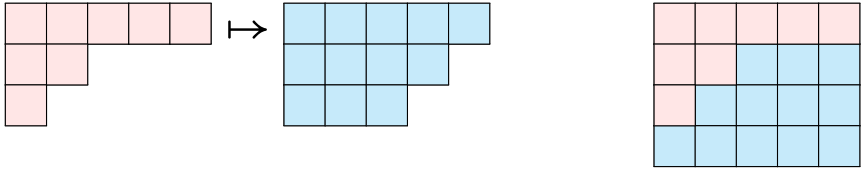

  {
    \ydiagram
    [*(red!10)]{5,2,1}
    *{5,2,1}\,
    {\Huge{$\mapsto$}}\,
    \ydiagram
    [*(cyan!20)]{5,4,3}
    *{5,4,3}
    \hspace{0.8in}
    \ydiagram
    [*(red!10)]{5,2,1,0}
    *[*(cyan!20)]{0,2+3,1+4,5}
    *{5,5,5,5}
  }
  \caption{
  Evaluating
  $(5,2,1)^\#=(5,4,3)$ in $Part_d(4)$.
  }
  \end{figure}
  \end{center}
\end{example}

\begin{example}
For $n\geq 1$, let $Perm(n):=\S_n$. The operad of permutations $Perm$ is defined by partial compositions
\begin{align}
\label{PermOp}
a_1a_2\dots a_m \circ_i b_1 b_2\dots b_n:=
a_1'a_2'\dots {a_{i-1}'}\underbracket[0.1ex]{(b_1+a_i-1)\,(b_2+a_i-1)\,\dots (b_n+a_i-1)} a_{i+1}'\dots a_m'
\end{align}
for all $m,n\geq 1$, $1\leq i\leq m$. Here, we employ the one-line notation for the permutations  $a=a_1a_2\dots a_m\in Perm(m)$, $b=b_1b_2\dots b_n\in Perm(n)$, and set $a_k'=a_k+n-1$ if $a_k>a_i$ while letting $a_k'=a_k$ otherwise. While there are natural actions of $\S_n$ on itself, in the context of this example we will regard $Perm$ as a non-$\Sigma$ operad. It can be regarded as the desymmetrization of the symmetric associative operad $\A s$ in $\Sets$.

The operad structure on $Perm$ can be conveniently visualized by means of the corresponding bipartite matching diagrams, where the partial composition $a \circ_i b$ amounts to substituting the entire diagram $b$ for the $i$-th string of $a$. 
\begin{figure}[H]
\begin{tikzpicture}

\begin{scope}
    \foreach \i in {1,...,5} {
        \node[below] at (\i, -1) {\i};   
        \fill (\i,-1) circle (2pt);      
    }

    \foreach \i/\j in {1/3, 2/1, 3/4, 4/2, 5/5} {
        \node[above] at (\i, 1) {};  
        \fill (\i,1) circle (2pt);     
    }

    \draw (1,-1) -- (3,1);  
    \draw (2,-1) -- (1,1);  
    \draw[red] (3,-1) -- (4,1);  
    \draw (4,-1) -- (2,1);  
    \draw (5,-1) -- (5,1);  
\end{scope}

\node at (6, 0) {\Large $\circ_3$};

\begin{scope}[xshift=6cm]
    \foreach \i in {1,...,3} {
        \node[below] at (\i, -1) {\i};   
        \fill (\i,-1) circle (2pt);      
    }

    \foreach \i/\j in {1/2, 2/3, 3/1} {
        \node[above] at (\i, 1) {};  
        \fill (\i,1) circle (2pt);     
    }

    \draw (1,-1) -- (2,1);  
    \draw (2,-1) -- (3,1);  
    \draw (3,-1) -- (1,1);  
\end{scope}

\node at (10, 0) {\Large $=$};

\begin{scope}[xshift=10cm]
    \foreach \i in {1,...,7} {
        \node[below] at (\i, -1) {\i};   
        \fill (\i,-1) circle (2pt);      
    }

    \foreach \i/\j in {1/3, 2/1, 6/2, 7/7} {
        \node[above] at (\i, 1) {};  
        \fill (\i,1) circle (2pt);     
        \draw (\i, -1) -- (\j,1);
    }

    \foreach \i/\j in {3/5, 4/6, 5/4} {
        \node[above] at (\i, 1) {};  
        \fill (\i,1) circle (2pt);     
        \draw[red] (\i, -1) -- (\j,1);
    }

\end{scope}

\end{tikzpicture}
\caption{Evaluating $31425\circ_3 231 = 3156427$ in $Perm$.}
\end{figure}
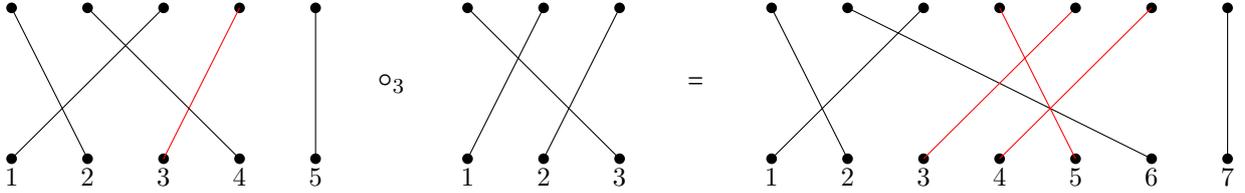
Each component $Perm(n)$ is a lattice with respect to the weak Bruhat order on the symmetric group. Recall that, as a partial order, the (right) weak Bruhat order is characterized by containment of inversion sets: ${a \leq b \Leftrightarrow Inv(a) \subseteq Inv(b)}$, where $Inv(u):=\{(u_i, u_j)|\,i<j\text{ and }u_i>u_j\}$ for a permutation $u$. 
A permutation is uniquely determined by its inversion set. In graphical terms, the inversion set of $u$ can be identified with the set of all pairwise string crossings  in the corresponding matching diagram, where a pair~${(p, q)\in Inv(u)}$ indicates a crossing between the $u^{-1}(p)$-th and the $u^{-1}(q)$-th string.
\begin{lemma}
\label{InvIm}
Let $m,n\geq 1$, $a\in Perm(m)$, $b\in Perm(n)$ and $1\leq i\leq m$. Then $Inv(a\circ_i b)$ is the disjoint union of the sets 
\begin{align*}
A_{\neq i}&=\{(p',q')|\,(p,q)\in Inv(a), p\neq a_i, q\neq a_i\}\\
A_{>}&=\{(p+n-1,k+a_i-1)|\,(p,a_i)\in Inv(a), k=1,2\dots n\}\\
A_{<}&=\{(k+a_i-1, q)|\,(a_i, q)\in Inv(a), k=1,2\dots n\}\\
B&=\{(p+a_i-1, q+a_i-1)|\,(p, q)\in Inv(b)\}.
\end{align*}
\end{lemma}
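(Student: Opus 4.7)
The plan is to partition the positions of $a\circ_i b$ into three consecutive blocks—the prefix $a_1'\dots a_{i-1}'$ (positions $1,\dots,i-1$), the inserted $b$-block $(b_1+a_i-1)\dots(b_n+a_i-1)$ (positions $i,\dots,i+n-1$), and the suffix $a_{i+1}'\dots a_m'$ (positions $i+n,\dots,m+n-1$)—and to analyze inversions by classifying the two underlying entries according to which blocks they sit in. The key structural observation is that the \emph{values} appearing in the $b$-block form exactly the interval $\{a_i,a_i+1,\dots,a_i+n-1\}$, whereas the shifted values $a_k'$ for $k\neq i$ all lie in its complement $\{1,\dots,a_i-1\}\cup\{a_i+n,\dots,m+n-1\}$. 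This immediately gives disjointness of $A_{\neq i}$, $A_{>}$, $A_{<}$, and $B$, since they are distinguished by which of the two coordinates of an inversion pair lies inside this value window.

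For the inclusion $A_{\neq i}\cup A_{>}\cup A_{<}\cup B\subseteq Inv(a\circ_i b)$, I would verify four routine inequalities. For $B$, the shift $x\mapsto x+a_i-1$ is order-preserving, so $(p,q)\in Inv(b)$ yields an inversion in the $b$-block. For $A_{>}$, whenever $p>a_i$ and $1\le k\le n$, one has $p+n-1>a_i+n-1\ge k+a_i-1$. For $A_{<}$, $k+a_i-1\ge a_i>q$. For $A_{\neq i}$, the shift $a_k\mapsto a_k'$ preserves the relative order among values distinct from $a_i$, which is checked by a short case split on the position of $a_k,a_l$ with respect to $a_i$; if $a_k>a_l$ and neither equals $a_i$, then the pair $(a_k',a_l')$ remains inverted in each of the three feasible subcases (both below, both above, or $a_k>a_i>a_l$).

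For the reverse inclusion, I would take an arbitrary inversion of $a\circ_i b$ and classify it by the block locations of the two underlying positions. Pairs with both entries inside the $b$-block correspond bijectively to elements of $B$ via the same shift. Pairs straddling the $b$-block—one prefix/suffix entry $a_k'$ together with one $b$-block entry $b_j+a_i-1$—reduce, via the inequalities above, to the conditions $(a_k,a_i)\in Inv(a)$ or $(a_i,a_k)\in Inv(a)$, according to whether $a_k'$ is the larger or smaller coordinate; a crucial point is that the resulting inversion depends only on how $a_k$ compares to $a_i$ and not on $j$, so each of $j=1,\dots,n$ contributes, which is what produces the full range $k=1,\dots,n$ in the definitions of $A_{>}$ and $A_{<}$. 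Pairs with both entries in the prefix/suffix regions (possibly on opposite sides of the $b$-block) reduce to $A_{\neq i}$ by the same order-preservation of $a_k\mapsto a_k'$ used above.

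The main obstacle is purely bookkeeping around the piecewise shift $a_k'=a_k+n-1$ when $a_k>a_i$ and $a_k'=a_k$ otherwise: one has to track how it interacts with the inversion inequalities in each configuration, in particular the mixed case where $a_k$ and $a_l$ lie on opposite sides of $a_i$. Once the value-window observation for the $b$-block is in place, both the disjointness and the exhaustive case analysis become a short, entirely mechanical verification.
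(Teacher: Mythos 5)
Your proposal is correct and follows essentially the same route as the paper: both classify inversions of $a\circ_i b$ according to whether zero, one, or both coordinates lie in the value window $[a_i,a_i+n-1]$ (equivalently, whether the underlying positions lie in the inserted $b$-block), the paper phrasing this in terms of string crossings in matching diagrams and you in terms of position blocks. Your explicit verification that the piecewise shift $t\mapsto t'$ preserves order on values distinct from $a_i$ is the same order-preservation fact the paper uses, just spelled out in more detail.
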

\begin{proof}
Interpreted in terms of matching diagrams, the lemma amounts to noting that upon substituting the matching diagram of $b$ for the $i$-th string of $a$, any string that used to cross the $i$-th string of $a$ will cross the entire batch of strings $b$ in $a\circ_i b$. Furthermore, all other crossings that were present in $a$ will remain in $a\circ_i b$, and all the crossings of $b$ will transfer into $a\circ_i b$ accordingly.

More precisely, note that any inversion $(p,q)\in Inv(a)$ such that $p\neq a_i$ and $q\neq a_i$ gives rise to a unique inversion $(p',q')\in Inv(a\circ_i b)$, where, as before, $t'=t+n-1$ if $t>a_i$ and $t'=t$ otherwise.
Conversely, by definition of the partial compositions in $Perm$, any inversion $(u,v)\in Inv(a\circ_i b)$, where neither $u$ nor $v$ is in the range $[a_i,a_i+n-1]$ comes from a unique inversion $(p,q)\in Inv(a)$ such that $u=p'$, $v=q'$. Next, any inversion from $Inv(a)$ involving $a_i$, would it appear on the right $(p,a_i)$ or on the left $(a_i, q)$, gives rise to $n$ inversions in $Inv(a\circ_i b)$ induced by the batch of $n$ underscored elements in \eqref{PermOp}. All such inversions, and only they, are of the form $(u,v)$, where either $a_i\leq u\leq a_i+n-1$ or $a_i\leq v\leq a_i+n-1$. Finally, any inversion $(u,v)\in Inv(a\circ_i b)$ where $a_i \leq u, v \leq a_i+n-1$ is induced by the inversion $(u-a_i+1, v-a_i+1)$ from $Inv(b)$ and this correspondence is bijective.
\end{proof}
In what follows, the union $A_{\neq i}\cup A_{>}\cup A_{<}$ for $a\circ_i b$ will be denoted by $Inv_A(a\circ_i b)$, while $Inv_B(a\circ_i b)$ will stand for the set $B$ in the sense of the notation used above.
\begin{corollary}
\label{PermMonoCor}
For any two permutations $b',b''\in Perm(m)$, $Inv_A(a\circ_i b')=Inv_A(a\circ_i b'')$. 
Moreover, for any~$b\leq c$ in $Perm(n)$, we have $a\circ_i b\leq a \circ_i c$. The latter follows from observing that $Inv_A(a\circ_i b)=Inv_A(a\circ_i c)$, and $Inv_B(a\circ_i b)\subseteq Inv_B(a\circ_i c)$.
\end{corollary}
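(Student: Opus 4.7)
The plan is to read the statement as a near-immediate bookkeeping consequence of Lemma~\ref{InvIm}, so the proof will be short. There is really no hard step; the main care is to notice that the three pieces $A_{\neq i}, A_{>}, A_{<}$ in that lemma depend only on the data of $a$, on the index $i$, and on the arity $n$ of the inserted permutation --- nothing about the inserted permutation's internal structure enters.

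First I would fix $a \in Perm(m)$ and $1\leq i \leq m$ and unpack the definitions of $A_{\neq i}$, $A_{>}$, $A_{<}$ from Lemma~\ref{InvIm}. Since each of these three sets is described purely in terms of inversions of $a$ involving (or avoiding) the value $a_i$, together with the integers $k=1,\dots,n$ and the shifts $p \mapsto p'$, $q\mapsto q'$, none of them involves the second argument of $\circ_i$. So for any two $b',b'' \in Perm(n)$ the equality $Inv_A(a\circ_i b') = Inv_A(a\circ_i b'')$ follows directly, proving the first assertion.

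Next, for the monotonicity statement, I recall that the (right) weak Bruhat order is characterized by containment of inversion sets, so $b \leq c$ in $Perm(n)$ gives $Inv(b) \subseteq Inv(c)$. Applying the shift $(p,q)\mapsto (p+a_i-1,q+a_i-1)$, which is injective, I obtain
\[
Inv_B(a\circ_i b) = \{(p+a_i-1,q+a_i-1)\mid (p,q)\in Inv(b)\} \subseteq Inv_B(a\circ_i c).
\]

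Finally, combining the two observations with the disjoint-union decomposition $Inv(a\circ_i x) = Inv_A(a\circ_i x)\sqcup Inv_B(a\circ_i x)$ provided by Lemma~\ref{InvIm}, I conclude
\[
Inv(a\circ_i b) = Inv_A(a\circ_i c) \sqcup Inv_B(a\circ_i b) \subseteq Inv_A(a\circ_i c)\sqcup Inv_B(a\circ_i c) = Inv(a\circ_i c),
\]
which by the inversion-set characterization of the weak Bruhat order yields $a\circ_i b \leq a\circ_i c$, as required. Since every step is a direct appeal either to Lemma~\ref{InvIm} or to the definition of the weak order, there is no genuine obstacle to be overcome; the only mild subtlety is checking that $Inv_A$ is genuinely independent of the second argument, which is visible from the formulas defining $A_{\neq i}, A_{>}, A_{<}$.
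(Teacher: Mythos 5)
Your argument is correct and coincides with the paper's own justification: the paper likewise observes that $A_{\neq i}$, $A_{>}$, $A_{<}$ from Lemma~\ref{InvIm} depend only on $a$, $i$ and the arity $n$ of the inserted permutation, and that $Inv_B$ is the shifted image of the inversion set of the second argument, so containment of inversion sets is preserved. Your write-up merely makes these two observations explicit; there is no difference in approach.
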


In general, the partial compositions $(-\circ_i-)$ in $Perm$ are not order-preserving in the first argument. Thus, in particular, $Perm$ is not a lattice operad in the sense of our definition. 
Indeed, note that in $Perm(3)$ we have $132\leq 312$, since $Inv(132)=\{(3,2)\}$ and ${Inv(312)=\{(3,1), (3,2)\}}$. At the same time,
$132\circ_1 12=1243$, $312\circ_1 12=3412$. Then $Inv(1243)=\{(4,3)\}$ and $Inv(3412)=\{(3,1),(3,2),(4,1),(4,2)\}$ render $1243$ and $3412$ incomparable.
Nevertheless, the partial compositions do fully respect the lattice structure in the second argument:
\begin{align*}       
a\circ_i (b\wedge c) &= (a\circ_i b)\wedge (a\circ_i c)\\
a\circ_i (b\vee c) &= (a\circ_i b)\vee (a\circ_i c).    
\end{align*}
At this point we recall \cite[Exercise 3.185]{stanley} that the join of $b,c\in Perm(n)$ with respect to the weak Bruhat order can be characterized as the unique permutation  $b\vee c$ in $Perm(n)$ such that $Inv(b\vee c)=\overline{Inv(b)\cup Inv(c)}$, where the overline denotes the transitive closure of the corresponding set regarded as a binary relation on~$[n]$. 
Importantly, any inversion set $Inv(u)$ is transitively closed itself.
The meet $b\wedge c$ can be obtained as $(b^\# \vee c^\#)^\#$, where~${\#:Perm(n)\to Perm(n)}$ is the involution $a_1a_2\dots a_n\mapsto a_n a_{n-1}\dots a_1$, which is a lattice anti-automorphism.
\begin{proposition}
  For any $m,n\geq 1$, $a\in Perm(m)$ and $1\leq i\leq m$, the map~${a\circ_i(-): Perm(n)\to Perm(m+n-1)}$ is an injective lattice homomorphism.
\end{proposition}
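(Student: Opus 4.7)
The plan is to deduce both parts of the statement from an inversion-set calculation built on Lemma~\ref{InvIm} and Corollary~\ref{PermMonoCor}. The organizing observation is that for every $w \in Perm(n)$ the inversion set decomposes as
\[
  Inv(a \circ_i w) \;=\; I_A \,\sqcup\, B_w,
\]
where $I_A := Inv_A(a \circ_i w)$ depends only on $a$ and $i$, and $B_w := Inv_B(a \circ_i w) = \{(p + a_i - 1,\, q + a_i - 1) : (p, q) \in Inv(w)\}$ is a shifted copy of $Inv(w)$ supported entirely inside $[a_i,\, a_i + n - 1]^2$. Injectivity is then immediate: $a \circ_i b = a \circ_i c$ forces $B_b = B_c$, hence $Inv(b) = Inv(c)$, hence $b = c$.

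For the join identity I would combine this decomposition with the characterization $Inv(b \vee c) = \overline{Inv(b) \cup Inv(c)}$ cited just above the statement, obtaining
\[
  Inv\bigl(a \circ_i (b \vee c)\bigr) = I_A \cup \overline{B_b \cup B_c},
  \qquad
  Inv\bigl((a \circ_i b) \vee (a \circ_i c)\bigr) = \overline{I_A \cup B_b \cup B_c}.
\]
The inclusion $\subseteq$ is automatic; for $\supseteq$ it suffices to show that the left-hand side is already transitively closed. Pairs entirely within $I_A$ are handled because $I_A$ is itself an inversion set (take $w$ to be the identity of $Perm(n)$, whose $B$-part is empty). Pairs entirely within $\overline{B_b \cup B_c}$ are handled by closure. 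The only genuinely new case is a mixed chain $(x, y), (y, z)$ with one pair in $I_A$ and the other in $\overline{B_b \cup B_c}$; then the common coordinate $y$ lies in $[a_i, a_i + n - 1]$, which forces the $I_A$-pair to be of type $A_>$ or $A_<$, and a direct inspection of the defining formulas shows that the composed pair $(x, z)$ again lies in $A_>$ or $A_<$ respectively.

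For the meet identity I would pass to the standard description of inversion sets as the \emph{bi-closed} subsets of $\{(p, q) : p > q\}$ -- those closed under transitivity and under its co-dual (if $p > q > r$ and $(p, r)$ belongs to the set, then so does $(p, q)$ or $(q, r)$) -- under which $\sigma \wedge \sigma'$ corresponds to the largest bi-closed subset of $Inv(\sigma) \cap Inv(\sigma')$. Setting $w := (a \circ_i b) \wedge (a \circ_i c)$, monotonicity from Corollary~\ref{PermMonoCor} already gives $a \circ_i (b \wedge c) \leq w$; for the reverse, the inequality $a \circ_i (b \wedge c) \leq w$ forces the $I_A$-part of $Inv(w)$ to be the entirety of $I_A$, so $Inv(w) = I_A \sqcup B'$ with $B' \subseteq B_b \cap B_c$. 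The main technical obstacle is a leak-proofness check: $B'$, transported back to $[n]^2$, is bi-closed, because any transitivity or co-transitivity witness for $Inv(w)$ with endpoints in $[a_i, a_i + n - 1]$ is forced to keep its intermediate vertex in the same range, while $I_A$ contributes no pair with both coordinates inside $[a_i, a_i + n - 1]$. Hence $B'$ is the inversion set of some $\sigma \in Perm(n)$ with $\sigma \leq b, c$, so $\sigma \leq b \wedge c$ and $B' \subseteq B_{b \wedge c}$, yielding $Inv(w) \subseteq Inv\bigl(a \circ_i (b \wedge c)\bigr)$ and closing the argument.
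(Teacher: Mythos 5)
Your proof is correct, but it diverges from the paper's argument in the meet case and in the organization of the join case, so a comparison is worthwhile. For the join, the paper proves only the nontrivial inclusion $Inv(a\circ_i(b\vee c))\subseteq Inv((a\circ_i b)\vee(a\circ_i c))$ by induction on the length of a shortest transitive chain witnessing an inversion of $b\vee c$; you instead verify that $I_A\cup\overline{B_b\cup B_c}$ is already transitively closed, checking the mixed chains via the observation that no pair of $I_A$ has both coordinates in $[a_i,a_i+n-1]$ while every pair of $\overline{B_b\cup B_c}$ has both there. The two arguments are essentially equivalent in content, but yours identifies both inversion sets exactly rather than proving one inclusion, which is slightly more informative. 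For the meet, the paper takes the short route through the anti-automorphism $\#$ (one-line reversal), using $(a\circ_i d)^\#=a^\#\circ_{m-i+1}d^\#$ and $b\wedge c=(b^\#\vee c^\#)^\#$ to reduce everything to the join case; you argue directly, extracting the $B$-part $B'$ of $Inv((a\circ_i b)\wedge(a\circ_i c))$ and showing it is closed and co-closed, hence the inversion set of some $\sigma\le b,c$, hence contained in $B_{b\wedge c}$. Your route is self-contained at the level of inversion sets but imports the standard (and true, though nowhere stated in the paper) characterization of inversion sets of permutations as exactly the transitively closed and co-closed subsets of $\{(p,q)\mid p>q\}$; if you use this argument you should state and reference that fact, since the paper only cites the join characterization from Stanley. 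Finally, you address injectivity explicitly via $B_b=B_c\Rightarrow Inv(b)=Inv(c)\Rightarrow b=c$, which the paper's proof leaves tacit; that is a small but genuine improvement in completeness.
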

\begin{proof}
We begin by showing that $a\circ_i(-)$ preserves the join.
Let $b, c\in Perm(n)$. 
It suffices to show that 
${a\circ_i (b\vee c)\leq (a\circ_i b) \vee (a\circ_i c)}$.
The reverse inequality follows from $(-\circ_i-)$ being order-preserving in the second argument, cf. corollary \ref{PermMonoCor}.
To this end, let $(u,v)$ be an inversion in $a\circ_i(b\vee c)$. If ${(u,v)\in Inv_A(a\circ_i(b\vee c))}$, then by corollary \ref{PermMonoCor}, $Inv_A(a\circ_i(b\vee c))=Inv_A(a\circ_i b)$. Thus, $$(u,v)\in Inv(a\circ_i b)\subseteq Inv(a\circ_i b)\cup Inv(a\circ_i c) \subseteq Inv((a\circ_i b)\vee (a\circ_i c))$$

Otherwise, if $(u,v)\in Inv_B(a\circ_i (b\vee c))$, then it is of the form $(p+a_i-1,q+a_i-1)$ for some $(p,q)\in Inv(b\vee c)$.
Since $Inv(b\vee c)=\overline{Inv(b)\cup Inv(c)}$, then by definition of transitive closure, there is a chain of inversions
\begin{align}
\label{InvChain}
 (p,t_1),(t_1, t_2),\dots, (t_n,q),
\end{align}
from $Inv(b)$, $Inv(c)$, where $p>t_1>\dots>t_n>q$.
We proceed by induction on the smallest value of $n$ for such a chain. If $n=1$, then $(p,q)\in Inv(b)$ or $(p,q)\in Inv(c)$. Hence, $(u,v)=(p+a_i-1,q+a_i-1)$ is in $Inv(a\circ_i b)$ or in $Inv(a\circ_i c)$ leading to
$(u,v)\in\overline{Inv(a\circ_i b) \cup Inv(a\circ_i c)}= Inv((a\circ_i b)\vee (a\circ_i c))$.

Now, let $(p,q)$ be such that it has \eqref{InvChain} as its shortest transitive chain (possibly, non-unique). Dropping off temporarily the last term $(t_n,q)$, by the inductive assumption, we get ${(p+a_{i}-1,t_{n}+a_{i}-1)\in Inv((a\circ_i b)\vee (a\circ_i c))}$. Since $(t_n,q)$ is in $Inv(b)$ or in $Inv(c)$, then 
$$(t_n+a_i-1,q+a_i-1)\in Inv(a\circ_i b)\cup Inv(a\circ_i c)\subseteq Inv((a\circ_i b)\vee (a\circ_i c)).$$ 
The latter set is transitively closed and contains elements $(p+a_{i}-1,t_{n}+a_{i}-1)$, $(t_n+a_i-1,q+a_i-1)$. Hence, it contains $(p+a_i-1,q+a_i-1)=(u, v)$ as well, thus giving the desired inclusion.

To see that $a\circ_i(-)$ preserves the meet, observe that by the very definition $\eqref{PermOp}$ of the partial compositions in $Perm$, we have ${(a\circ_i d)^\#=a^\#\circ_{m-i+1} d^\#}$ for any permutation $d$.
Then
\begin{align*}
a\circ_i(b\wedge c)&=
a\circ_i(b^\#\vee c^\#)^\#=
a^{\#\#}\circ_i(b^\#\vee c^\#)^\#=
(a^{\#}\circ_{m-i+1}(b^\#\vee c^\#))^\#\\
&=((a^{\#}\circ_{m-i+1} b^\#)\vee (a^{\#}\circ_{m-i+1}c^\#))^\#
=((a\circ_{i} b)^\#\vee (a\circ_{i}c)^\#)^\#
=(a\circ_i b)\wedge (a\circ_i c).
\end{align*}
\end{proof}
For $k\geq l$, this construction yields $(k-l+1)\cdot(k-l+1)!$ lattice embeddings of $Perm(l)$ into $Perm(k)$.
  \begin{center}
  \begin{figure}[H]
    \resizebox{0.43\textwidth}{!} { \begin{tikzpicture}[scale=1, transform shape, node distance=1.5cm, every node/.style={inner sep=1pt}]

\node (1234) at (0, 0) {1234};

\node (1243) at (-1.3, 1) {1243};
\node (1324) at (0, 1) {1324};
\node (2134) at (1.3, 1) {2134};

\node (1423) at (-2.6, 2) {1423};
\node (1342) at (-1.3, 2) {1342};
\node (2143) at (0, 2) {2143};
\node (3124) at (1.3, 2) {3124};
\node (2314) at (2.6, 2) {2314};

\node (1432) at (-3.5, 3) {1432};
\node (4123) at (-2.1, 3) {4123};
\node (2413) at (-0.7, 3) {2413};
\node (3142) at (0.7, 3) {3142};
\node (3214) at (2.1, 3) {3214};
\node (2341) at (3.5, 3) {2341};

\node (4132) at (-2.6, 4) {4132};
\node (4213) at (-1.3, 4) {4213};
\node (3412) at (0, 4) {3412};
\node (2431) at (1.3, 4) {2431};
\node (3241) at (2.6, 4) {3241};

\node (4312) at (-1.3, 5) {4312};
\node (4231) at (0, 5) {4231};
\node (3421) at (1.3, 5) {3421};

\node (4321) at (0, 6) {4321};


\draw[line width=2.5, green!60] (2341) -- (2431);
\draw[line width=2.5, green!60] (2341) -- (3241);
\draw[line width=2.5, green!60] (2431) -- (4231);
\draw[line width=2.5, green!60] (3241) -- (3421);
\draw[line width=2.5, green!60] (4231) -- (4321);
\draw[line width=2.5, green!60] (3421) -- (4321);

\draw[line width=2.5, red!60] (1234) -- (1324);
\draw[line width=2.5, red!60] (1234) -- (1243);
\draw[line width=2.5, red!60] (1243) -- (1423);
\draw[line width=2.5, red!60] (1324) -- (1342);
\draw[line width=2.5, red!60] (1423) -- (1432);
\draw[line width=2.5, red!60] (1342) -- (1432);

\draw[line width=2.5, blue!60] (1234) -- (1324);
\draw[line width=2.5, blue!60] (1234) -- (2134);
\draw[line width=2.5, blue!60] (1324) -- (3124);
\draw[line width=2.5, blue!60] (2134) -- (2314);
\draw[line width=2.5, blue!60] (2314) -- (3214);
\draw[line width=2.5, blue!60] (3124) -- (3214);

\draw[line width=2.5, cyan!60] (4123) -- (4132);
\draw[line width=2.5, cyan!60] (4123) -- (4213);
\draw[line width=2.5, cyan!60] (4213) -- (4231);
\draw[line width=2.5, cyan!60] (4132) -- (4312);
\draw[line width=2.5, cyan!60] (4231) -- (4321);
\draw[line width=2.5, cyan!60] (4312) -- (4321);

\draw (1234) -- (2134);
\draw (1234) -- (1324);
\draw (1234) -- (1243);

\draw (1243) -- (2143);
\draw (1243) -- (1423);
\draw (2134) -- (2314);
\draw (2134) -- (2143);
\draw (1324) -- (3124);
\draw (1324) -- (1342);

\draw (1423) -- (1432);
\draw (1423) -- (4123);
\draw (1342) -- (1432);
\draw (1342) -- (3142);
\draw (2143) -- (2413);
\draw (3124) -- (3214);
\draw (3124) -- (3142);
\draw (2314) -- (3214);
\draw (2314) -- (2341);

\draw (1432) -- (4132);
\draw (4123) -- (4213);
\draw (4123) -- (4132);
\draw (2413) -- (4213);
\draw (2413) -- (2431);
\draw (3142) -- (3412);
\draw (3214) -- (3241);
\draw (2341) -- (2431);
\draw (2341) -- (3241);

\draw (4132) -- (4312);
\draw (4213) -- (4231);
\draw (3412) -- (4312);
\draw (3412) -- (3421);
\draw (2431) -- (4231);
\draw (3241) -- (3421);

\draw (4321) -- (4312);
\draw (4321) -- (4231);
\draw (4321) -- (3421);

\end{tikzpicture}}  
    \caption{Four embeddings of $Perm(3)$ into $Perm(4)$ under $a\circ_i(-)$ for $a=12,21$ and $i=1,2$.}
  \end{figure}
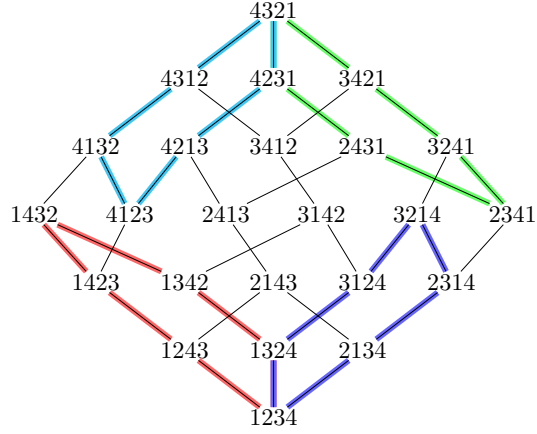  
  \end{center}
\end{example}

\section{More examples: lattice operads from polytopes}
\label{LatOpsPolys}
We turn to examples of lattice operads induced by operads of polytopes in $(\Poly, \times)$, $(\Poly, *)$ and $(\Poly_0, \oplus)$ via the following construction. Given a convex polytope $P$, let $\L(P)$ be its face lattice. Given an affine map of polytopes ${f: P \to Q}$ and a face $F$ of $P$, the intersection of all faces of $Q$ that contain $f(F)$ is the unique face $F'\subset Q$ of the smallest dimension that contains $f(F)$. Furthermore, this preserves the inclusion relation $F\subseteq G\Rightarrow F'\subseteq G'$ and is compatible with compositions of affine maps. That turns $\L$ into a poset-valued functor on $\Poly$. Note that, in general, an order-perserving map $\L(f): \L(P) \to \L(Q)$ constructed in this way fails to be a lattice homomorphism. 
  Indeed, 
  as a simple example, consider the projection $p$ of the one-dimensional polytope $P=[0,1]$ onto a single point $Q=\{pt\}$. Then we have ${\L(f)(\{0\}\wedge \{1\})=\L(f)(\varnothing)=\varnothing}$, while ${\L(f)(\{0\})\wedge \L(f)(\{1\})=Q\wedge Q = Q}$.

Nevertheless, in some cases, such as for polytope embeddings $P \to Q$, or as in the setting of Example~\ref{SubsetLatOp}, the functor $\L$ does respect the lattice structure. Combined with the crucial properties, cf.~\cite{kalai}, that 
\begin{equation}
\begin{aligned}
\L(P\times Q)&=\L(P)\dtimes \L(Q)\\
\L(P*Q)&=\L(P)\times \L(Q)\\
\L(P\oplus Q)&=\L(P)\utimes \L(Q)
\end{aligned}
\label{MonPolyToLat}
\end{equation} 
this allows one to construct operads in $\Lat$ out of operads in $\Poly$ with the corresponding symmetric monoidal structures.
\begin{example}
\label{SubsetLatOp}
  For each $n\geq 1$, let $S(n)$ be the lattice of all subsets of $[n]:=\{1,2,\dots,n\}$. Given $P\in \oS(n)$ and~$1\leq i\leq n$, we denote $P_{<i}:=\{x\in P| x< i\}$ and $P_{>i}:=\{x\in P| x > i\}$.
  Now, for $P\in S(n)$, $Q\in S(m)$ and $1\leq i\leq n$, we define
  \begin{align}
   \label{SubsetLatOpComp}
    P\circ_i Q:=\begin{cases}
                    P_{<i} \cup \{y + i - 1| y\in Q\} \cup \{x + m - 1| x \in P_{>i}\},\quad & i\in P\\
                    P_{<i} \cup \{x + m - 1| x \in P_{>i}\},\quad & i\notin P\\
                  \end{cases}.
  \end{align}  
  Upon providing each lattice $S(n)$ with the $\S_n$-action inherited from the natural action of $\S_n$ on $[n]$, the partial compositions defined above turn ~${S=\{S(n)|n\geq 1\}}$ into a symmetric operad in $(\Lat, \times)$. 
  By imposing an additional relation $P\circ_i \varnothing = \varnothing$ for all $n\geq 1$, $P\in S(n)$ and $1\leq i\leq n$, we get a lattice operad in $(\Lat, \dtimes)$. While the operadic axioms for the partial compositions defined above can be verified directly, a more conceptual way to get this operad structure is as follows.

  Consider the the operad  ${\it\oP r}$ of \emph{finite probability distributions} \cite{leinster, baezfritz}. 
  As a symmetric operad in $(\Poly, \times)$, the operad ${\it \oP r}$ is comprised of the standard simplicies
  \[
   {\it \oP r}(n):=\Delta^{n-1}=\{(p_1,p_2,\dots, p_n)\in\mathbb{R}^{n}| \sum\limits_{i=1}^{n}p_i=1, p_i\geq 0\},
  \]
  for all $n\geq 1$ with the coordinate-permuting $\S_n$-action. The partial compositions  are given by
  \begin{align*}  
   (p_1,p_2,\dots,p_n)\circ_i(q_1,q_2,\dots,q_m)=(p_1,p_2,\dots,p_{i-1},p_iq_1,\dots,p_iq_m,p_{i+1},\dots, p_n).
  \end{align*}
  That is, one may regard ${\it\oP r}$ as the suboperad of the word operad $\W I$, where $I:=[0,1]$ is considered as a  multiplicative monoid, cut out by the affine equation $\sum\limits_{i=1}^{n}p_i=1$ in each arity $n \geq 1$.

  Passing to the face lattices we have $\L(\Delta^{n-1})\simeq S(n)$ for all $n\geq 1$. Specifically, a face $F$ of the standard simplex~$\Delta^{n-1}$ can be identified with the set of all indicies $i\in [n]$ such that $p_i\neq 0$ in the relative interior of $F$. Equivalently, it is the unique face of $\Delta^{n-1}$ spanned by the vertices $x_i=(0\dots, 1,\dots, 0)$, with $1$ at the $i$-th position. The partial compositions~ $\circ_i$ transfer accordingly, yielding \eqref{SubsetLatOpComp} as we illustrate below.
  \begin{center}
  \begin{figure}[H]
  \subcaptionbox{$\{2,3\}\circ_1\{1,2\}=\{3,4\}$}
  {
  \resizebox{0.48\textwidth}{!} {


\begin{tikzpicture}[line join = round, line cap = round, z={(-0.2, -0.2)}]
\coordinate [label=right:4] (A) at (5,0,-4);
\coordinate [label=left:1] (B) at (1.8,0,-2);
\coordinate [label=above:3] (C) at (4,3,0);
\coordinate [label=below:2] (D) at (5,0,2);

\coordinate [label=below:1] (TA) at (-1.4-3,0.4,0);
\coordinate [label=below:2] (TB) at (1.4-3,0.4,0);
\coordinate [label=above:3] (TC) at (-3,1.4*1.73+0.4,0);

\coordinate [label=below:1] (LA) at (0,0.4,0);
\coordinate [label=above:2] (LB) at (0,1.4*1.73+0.4,0);

\draw[-, fill=green!30, opacity=.5] (A)--(D)--(B)--cycle;
\draw[-, fill=green!30, opacity=.5] (A) --(D)--(C)--cycle;
\draw[-, fill=green!30, opacity=.5] (B)--(D)--(C)--cycle;

\fill[opacity=0.3,green] (TA)--(TB)--(TC)--cycle;
\draw[-] (TC)--(TA)--(TB);
\draw[-, line width=2pt, red] (TC)--(TB);
\draw[-, line width=2pt, red] (A)--(C);

\draw[-, line width=2pt, red] (LA)--(LB);
\draw[->, thick] (1,1.6)-- (2,1.6) node[midway, above] {$\circ_1$};

\node at (-1,1.6) {\huge $\times$};

\end{tikzpicture}

  }
  \subcaptionbox{$\{2,3\}\circ_2 \{1,2\}=\{2,3,4\}$}
  {
  \resizebox{0.48\textwidth}{!} {


\begin{tikzpicture}[line join = round, line cap = round, z={(-0.2, -0.2)}]
\coordinate [label=right:4] (A) at (5,0,-4);
\coordinate [label=left:1] (B) at (1.8,0,-2);
\coordinate [label=above:3] (C) at (4,3,0);
\coordinate [label=below:2] (D) at (5,0,2);

\coordinate [label=below:1] (TA) at (-1.4-3,0.4,0);
\coordinate [label=below:2] (TB) at (1.4-3,0.4,0);
\coordinate [label=above:3] (TC) at (-3,1.4*1.73+0.4,0);

\coordinate [label=below:1] (LA) at (0,0.4,0);
\coordinate [label=above:2] (LB) at (0,1.4*1.73+0.4,0);

\draw[-, fill=green!30, opacity=.5] (A)--(D)--(B)--cycle;
\draw[-, fill=red!30, opacity=.7] (A) --(D)--(C)--cycle;
\draw[-, fill=green!30, opacity=.7] (B)--(D)--(C)--cycle;

\fill[opacity=0.3,green] (TA)--(TB)--(TC)--cycle;
\draw[-] (TC)--(TA)--(TB);
\draw[-, line width=2pt, red] (TC)--(TB);

\draw[-, line width=2pt, red] (LA)--(LB);
\draw[->, thick] (1,1.6)-- (2,1.6) node[midway, above] {$\circ_2$};

\node at (-1,1.6) {\huge $\times$};

\end{tikzpicture}

  }  
  \end{figure}
  \end{center}
  The suboperad $S'$ of $S$ consisting of non-empty sets is unital quadratic with $S'(1)=\{\{1\}\}$, generated by the binary operations ${\dashv}=\{1\}, {\vdash}=\{2\}, {\bot}=\{1,2\}\in S'(2)$.
  Modulo the existence of a unit and a symmetric structure, the linearization of $S'$ can be identified with the operad of \textit{triassociative algebras} \cite{loday-ronco}, \cite[Section 3.2.2]{giraudo}.   
  The 18 possible pairwise composites of the generators take values within all 7 non-empty subsets of $\{1,2,3\}$ corresponding to the non-empty faces of a 2-simplex, thus giving rise to 11 independent quadratic relations. The latter can in turn be indexed by the $11$ planar rooted trees with $4$ leaves. 
  \begin{center}
  \begin{figure}[H]
    \resizebox{0.5\textwidth}{!} {\begin{tikzpicture}

\coordinate [label=left:1]  (TA) at (-1.4,0);
\coordinate [label=right:3] (TB) at (1.4,0);
\coordinate [label=above:2] (TC) at (0,1.4*1.73);

\fill[opacity=0.3, green] (TA)--(TB)--(TC)--cycle;
\draw[-] (TC)--(TA)--(TB)--cycle;

\node [align=left, fill=red!10, rectangle, draw=gray] (A) at (-3.5, -0.3) 
{$\dashv \circ_1 \dashv$ 
 $\dashv \circ_2 \dashv$ \\
 $\dashv \circ_2 \vdash$
 $\dashv \circ_2 \bot$};

\path [->, draw=gray] (A) edge (TA);

\node [align=left, fill=red!10, rectangle, draw=gray] (B) at (3.5, -0.3)
{$\vdash \circ_1 \vdash$ 
 $\vdash \circ_2 \vdash$ \\ 
 $\vdash \circ_1 \dashv$
 $\vdash \circ_1 \bot$
 };

\path [->, draw=gray] (B) edge (TB);

\node [align=left, fill=red!10, rectangle, draw=gray] (C) at (0, 3.4) 
{$\dashv \circ_1 \vdash$
 $\vdash \circ_2 \dashv$
 };

 \path [->, draw=gray] (C) edge (TC);

 \node [align=left, fill=blue!10, rectangle, draw=gray] (E12) at (-2, 2) 
{$\dashv \circ_1 \bot$
 $\bot \circ_2 \dashv$
 };

\path [->, draw=gray] (E12) edge (-0.7, 0.7 * 1.73);

 \node [align=left, fill=blue!10, rectangle, draw=gray] (E23) at (2, 2) 
{$\vdash \circ_2 \bot$
 $\bot \circ_1 \vdash$
 };

\path [->, draw=gray] (E23) edge (0.7, 0.7 * 1.73);

\node [align=left, fill=blue!10, rectangle, draw=gray] (E13) at (0, -1.3) 
{$\bot \circ_1 \dashv$
 $\bot \circ_2 \vdash$
 };

\path [->, draw=gray] (E13) edge (0, 0);

\node [align=center]  at (0, 0.4) 
{$\bot \circ_1 \bot$
 $\bot \circ_2 \bot$
 };

\end{tikzpicture}}  
    \caption{Relations in $S'(3)$. The partial compositions within each box are pairwise equal.}
  \end{figure}
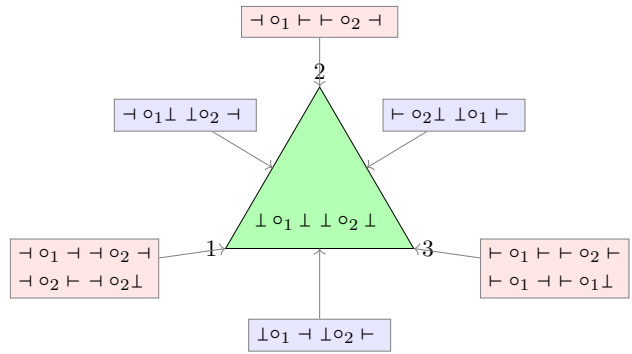  
  \end{center}  
\end{example}

\begin{example}
\label{CompAEx}
Let $M=\{pt\}$ be the $0$-simplex considered as an object of $(\Poly, *)$ and $\A \{pt\}$ be the corresponding $M$-associative operad.
Since $\Delta^p*\Delta^q=\Delta^{p+q+1}$ for all $p ,q\geq 0$, then, inductively, ${\A \{pt\}(n)=\Delta^{n-2}}$.
Upon applying the face lattice functor, we obtain a non-$\Sigma$, non-unital operad $Comp$ in $(\Lat, \times)$. Explicitly, for $n\geq 2$, $Comp(n)=\{0,1\}^{(n-1)}$ is the boolean lattice of rank $n-1$.
A binary word~${b_1\dots b_{n-1}\in Comp(n)}$ encodes the unique face of $\Delta^{n-2}$ whose vertices are $(0,\dots, 1,\dots, 0)$ with $1$ at the $i$-th position iff $b_i=1$. In particular, the empty face is encoded by a string of zeros.
\begin{center}
  \begin{figure}[H]
  \subcaptionbox{$10\circ_1 11=1110$}
  {
  \resizebox{0.41\textwidth}{!} {


\begin{tikzpicture}[line join = round, line cap = round, z={(-0.2, -0.2)}]
\coordinate [label=right:4] (A) at (5,0,-4);
\coordinate [label=left:1] (B) at (1.8,0,-2);
\coordinate [label=above:3] (C) at (4,3,0);
\coordinate [label=below:2] (D) at (5,0,2);

\coordinate [label=below:1] (LA) at (0,0.4,0);
\coordinate [label=above:2] (LB) at (0,1.4*1.73+0.4,0);

\coordinate [label=below:1] (MA) at (-2,0.4,0);
\coordinate [label=above:2] (MB) at (-2,1.4*1.73+0.4,0);

\draw[-, fill=green!30, opacity=.5] (A)--(D)--(B)--cycle;
\draw[-, fill=green!30, opacity=.5] (A) --(D)--(C)--cycle;
\draw[-, fill=red!30, opacity=.5] (B)--(D)--(C)--cycle;

\draw[-, line width=2pt, red] (LA)--(LB);
\draw[-, line width=2pt, black!30!green!50] (MA)--(MB);
\node[fill, circle, red, minimum size=6pt, inner sep=0] at (MA) {};
\draw[->, thick] (1,1.6)-- (2,1.6) node[midway, above] {$\circ_1$};

\node at (-1,1.6) {\huge $*$};

\end{tikzpicture}

  }
  \subcaptionbox{$011\circ_2 0=0011$}
  {
  \resizebox{0.5\textwidth}{!} {


\begin{tikzpicture}[line join = round, line cap = round, z={(-0.2, -0.2)}]
\coordinate [label=right:4] (A) at (5,0,-4);
\coordinate [label=left:1] (B) at (1.8,0,-2);
\coordinate [label=above:3] (C) at (4,3,0);
\coordinate [label=below:2] (D) at (5,0,2);

\coordinate [label=below:1] (TA) at (-1.4-3,0.4,0);
\coordinate [label=below:2] (TB) at (1.4-3,0.4,0);
\coordinate [label=above:3] (TC) at (-3,1.4*1.73+0.4,0);

\draw[-, fill=green!30, opacity=.5] (A)--(D)--(B)--cycle;
\draw[-, fill=green!30, opacity=.7] (A) --(D)--(C)--cycle;
\draw[-, fill=green!30, opacity=.7] (B)--(D)--(C)--cycle;
\draw[-, line width=2pt, red] (A)--(C);

\fill[opacity=0.3,green] (TA)--(TB)--(TC)--cycle;
\draw[-] (TC)--(TA)--(TB);
\draw[-, line width=2pt, red] (TC)--(TB);

\draw[->, thick] (1,1.6)-- (2,1.6) node[midway, above] {$\circ_2$};
\node[fill, circle, minimum size=6pt, inner sep=0] at (0, 1.6) {};

\node at (-1,1.6) {\huge $*$};

\end{tikzpicture}

  }  
  \end{figure}
  \end{center}
Combinatorially, this operad can be interpreted as an operad of integer compositions.
Namely, given a binary word ${b=b_1\dots b_{n-1}\in Comp(n)}$, let $i_1<i_2<\dots < i_k$ be all the indicies $i$ such that $b_i=1$.  Then  the equality~$n=i_1+(i_2-i_1)+\dots + (i_k-i_{k-1}) + (n-i_k)$ yields a composition of $n$ consisting of $k+1$ summands given in the respective order. If $b$ is a string of zeros, the corresponding composition is the trivial one consisting of a single summand $n$. The partial order structure on~$Comp(n)$ is that of a boolean lattice. 

Integer compositions can be encoded graphically by means of tiling diagrams, where a composition ${n=n_1+\dots+n_k}$ is represented by a sequence of adjacent rectangular tiles of lengths $n_1, \dots, n_k$ placed on a grid and aligned in the given order. The corresponding binary word is obtained by marking separators between adjacent tiles by~$1$ and marking all the auxiliary grid lines by $0$. 
\begin{center}
  \begin{figure}[H]
    \resizebox{0.23\textwidth}{!} {\begin{tikzpicture}
\draw[step=1cm,gray,very thin, fill=green!15] (0,0) grid (6,1) rectangle (0,0);
\draw[red, line width=0.8mm] (1,1) -- (1,0);
\draw[red, line width=0.8mm] (4,1) -- (4,0);
\draw[red, line width=0.8mm] (5,1) -- (5,0);

\node at (1,-0.6) {\Large 1};
\node at (2,-0.6) {\Large 0};
\node at (3,-0.6) {\Large 0};
\node at (4,-0.6) {\Large 1};
\node at (5,-0.6) {\Large 1};

\end{tikzpicture}}
  \caption
  {The tiling diagram and the binary word for the composition $6=1+3+1+1$.}
  \end{figure}
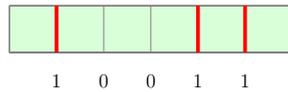
  \end{center}
  
Under this presentation, a partial composition on $Comp$ amounts to substituting a tiling diagram $b$ into the $i$-th cell of a tiling diagram $a$. For instance, the calculation presented in figure (B) above is rendered as follows:
\begin{center}
  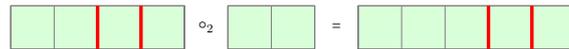
\begin{figure}[H]
    \resizebox{7.5cm}{!} {\begin{tikzpicture}
\draw[step=1cm,gray,very thin, fill=green!15] (0,0) grid (4,1) rectangle (0,0);
\draw[red, line width=0.8mm] (2,1) -- (2,0);
\draw[red, line width=0.8mm] (3,1) -- (3,0);

\node at (4.5, 0.5) {\Large $\circ_2$};

\draw[step=1cm,gray,very thin, fill=green!15] (5,0) grid (7,1) rectangle (5,0);

\node at (7.5, 0.5) {\Large $=$};

\draw[step=1cm,gray,very thin, fill=green!15] (8,0) grid (13,1) rectangle (8,0);
\draw[red, line width=0.8mm] (11,1) -- (11,0);
\draw[red, line width=0.8mm] (12,1) -- (12,0);

\end{tikzpicture}} 
    \caption
  {Evaluating $(2+1+1)\circ_2(2)=(3+1+1)$ in $Comp$.}
  \end{figure}
  \end{center}
The boolean structure on $Comp(n)$ can be interpreted as the partial order by reversed refinement of tiling diagrams.
\begin{center}
  \begin{figure}[H]
    \resizebox{0.4\textwidth}{!} {








\begin{tikzpicture}

\newcommand{\tile}[2]
{
 \fill[fill=green!15,draw=gray, thick] (0,0) rectangle ++({#1},1);   
 \draw[step=1cm,gray!70] (0,0) grid ({#1},1) rectangle (0,0); 
 \foreach \i in {#2} \draw[red, line width=0.8mm] (\i,1) -- (\i,0); 
}

\matrix (T) [matrix of nodes, column sep=0.7cm,row sep=0.8cm]
{  
    \tikz{\tile{4}{1,2}} & \tikz{\tile{4}{1,3}} & \tikz{\tile{4}{2,3}} \\   
   \tikz{\tile{4}{1}} & \tikz{\tile{4}{2}} & \tikz{\tile{4}{3}} \\   
};

\node (bottom) at (0,-3) {\tikz{\tile{4}{}}};
\node (top) at (0,3.2) {\tikz{\tile{4}{1,2,3}}};

\foreach \j in {1,...,3}
{
    \draw [shorten <=-2pt, shorten >=-2pt] (T-2-\j) -- (bottom);
    \draw [shorten <=-2pt, shorten >=-2pt] (T-1-\j) -- (top);
}

\draw [shorten <=-2pt, shorten >=-2pt] (T-2-1) to (T-1-1);
\draw [shorten <=-2pt, shorten >=-2pt] (T-2-1) to (T-1-2);

\draw [shorten <=-2pt, shorten >=-2pt] (T-2-2) to (T-1-1);
\draw [shorten <=-2pt, shorten >=-2pt] (T-2-2) to (T-1-3);

\draw [shorten <=-2pt, shorten >=-2pt] (T-2-3) to (T-1-2);
\draw [shorten <=-2pt, shorten >=-2pt] (T-2-3) to (T-1-3);






\end{tikzpicture}}
  \caption
  {The lattice $Comp(4)$.}
  \end{figure}
  \end{center}
  
The operad $Comp$ comes equipped with two natural commuting $\mathbb{Z}_2$-actions. 
The first one is an involution~$*: {Comp\to Comp}$ that, in terms of binary words, flips each letter of $b\in Comp(n)$ from $0$ to $1$ and vice versa for all $n\geq 2$. In each arity, it is a lattice anti-automophism that satisfies $(v\circ_i w)^*=v^*\circ_i w^*$ for all $v\in Comp(n)$, $w\in Comp(m)$ and $1\leq i\leq n$. The second involution $\#: {Comp\to Comp}$ acts on a word $b\in Comp(n)$ by reversing it. It is a lattice automorphism of~$Comp(n)$ for all $n\geq 2$ that satisfies \eqref{AMInvol}.
  
The linearization of the operad $Comp$ has appeared before as the operad of \textit{nonsymmetric Poisson} algebras \cite[Example 3.1]{markl-distr} or $As(2)$-algebras \cite{zinbiel}.

\begin{remark}
Naturally, by means of bijections between subsets of a given finite set, binary words and integer compositions, the operad of Example~\ref{SubsetLatOp} can be interpreted in terms of integer compositions as well; cf. \cite[Section 3.1.6]{giraudo}.

Another natural operad structure on integer compositions can be introduced in the spirit of example \ref{YoungLatEx}. Namely, for $k\geq 1$, an element $(a_1,a_2,\dots,a_k)\in \A \mathbb{N}(k+1)$ of the $M$-associative operad $A \mathbb{N}$ can be regarded as a composition $a_1+a_2+\dots+a_k$ of length $k$. In terms of the tiling diagrams, the partial composition $a\circ_i b$ amounts to inserting the diagram $b$ in between the $(i-1)$-th and the $i$-th tiles of the diagram $a$ if $1<i<k$, putting it in front of $a$ if $i=1$ and appending it at the end if $i=k$.
\begin{center}
  \begin{figure}[H]
    \resizebox{7.5cm}{!} {\begin{tikzpicture}
\draw[step=1cm,gray,very thin, fill=green!15] (0,0) grid (4,1) rectangle (0,0);
\draw[red, line width=0.8mm] (2,1) -- (2,0);
\draw[red, line width=0.8mm] (3,1) -- (3,0);

\node at (4.5, 0.5) {\Large $\circ_2$};

\draw[step=1cm,gray,very thin, fill=green!15] (5,0) grid (7,1) rectangle (5,0);

\node at (7.5, 0.5) {\Large $=$};

\draw[step=1cm,gray,very thin, fill=green!15] (8,0) grid (14,1) rectangle (8,0);

\draw[red, line width=0.8mm] (10,1) -- (10,0);
\draw[red, line width=0.8mm] (12,1) -- (12,0);
\draw[red, line width=0.8mm] (13,1) -- (13,0);

\end{tikzpicture}} 
    \caption
  {Evaluating $(2+1+1)\circ_2(2)=(2+2+1+1)$.}
  \end{figure}
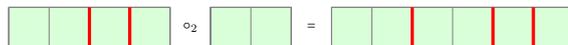
  \end{center}
  This operad structure is compatible with the partial order on integer compositions, analogous to that of the Young lattice on partitions, as introduced in \cite{bergeronetal} and its refinement as in \cite{bjorner-stanley}. In the former case the partial order is induced by the cover relation $a\prec a'$ iff a composition $a'$ is obtained from $a$ by increasing one of the parts $a_j$ by $1$ or by inserting a new part equal to $1$ into an arbitrary position in $a$. In the latter case, $a\prec a'$ iff a composition $a'$ is obtained from $a$ by increasing one of its parts $a_j$ by $1$ or by increasing $a_j$ by $1$ and then splitting it into two adjacent parts $h, a_j+1-h$ for some $h>0$. Neither of these posets is a lattice. Both of them are naturally graded by $n=a_1+\dots+a_k$. In particular, for each $n\geq 1$, the boolean lattice of compositions $Comp(n)$ considered above is an anti-chain in the set of all compositions $Comp=\bigcup_{n\geq 1}Comp(n)$ with respect to either one of these partial orders. 
 \begin{center}
  \begin{figure}[H]
    \resizebox{0.5\textwidth}{!} {\begin{tikzpicture}

\newcommand{\tile}[2]
{
 \fill[fill=green!15,draw=gray, thick] (0,0) rectangle ++({#1},1);   
 \draw[step=1cm,gray!70] (0,0) grid ({#1},1) rectangle (0,0); 
 \foreach \i in {#2} \draw[red, line width=0.8mm] (\i,1) -- (\i,0); 
}

\matrix (T) [matrix of nodes, column sep=0.7cm,row sep=0.8cm]
{   
   \tikz{\tile{3}{1,2}} & \tikz{\tile{3}{1}} & \tikz{\tile{3}{2}} & \tikz{\tile{3}{}} \\
   & \tikz{\tile{2}{1}} & \tikz{\tile{2}{}} & \\
};

\node (bottom) at (0,-2.5) {\tikz{\tile{1}{}}};
\node at (0,3) {\Huge$\dots$};

\draw [shorten <=-2pt, shorten >=-2pt] (bottom) to (T-2-2);
\draw [shorten <=-2pt, shorten >=-2pt] (bottom) to (T-2-3);

\draw [shorten <=-2pt, shorten >=-2pt] (T-2-2) to (T-1-1);
\draw [shorten <=-2pt, shorten >=-2pt] (T-2-2) to (T-1-2);
\draw [shorten <=-2pt, shorten >=-2pt] (T-2-2) to (T-1-3);

\draw [shorten <=-2pt, shorten >=-2pt] (T-2-3) to (T-1-2);
\draw [shorten <=-2pt, shorten >=-2pt] (T-2-3) to (T-1-3);
\draw [shorten <=-2pt, shorten >=-2pt] (T-2-3) to (T-1-4);

\foreach \j in {1,...,4}
{
    \draw [shorten <=-2pt, shorten >=-2pt] (T-1-\j) -- ++(1,1);
    \draw [shorten <=-2pt, shorten >=-2pt] (T-1-\j) -- ++(0.3,1);
    \draw [shorten <=-2pt, shorten >=-2pt] (T-1-\j) -- ++(-1,1);
    \draw [shorten <=-2pt, shorten >=-2pt] (T-1-\j) -- ++(-0.3,1);
}



\end{tikzpicture}}
  \caption
  {The graded poset of compositions \cite{bergeronetal}.}
  \end{figure}
  \end{center}
  
\end{remark}

\end{example}

\begin{example}
\label{CubeWordEx}
Let $J:=[-1,1]$ be the closed interval considered as a multiplicative semigroup and let $\W J$ be the corresponding word operad in $(\Poly, \times)$. That is, for $n\geq 1$, $\W J(n)$ is the $n$-dimensional hypercube~$[-1,1]^{n}$. Upon taking the face lattices, we obtain the corresponding lattice operad $T$ in $(\Lat, \dtimes)$. Explicitly, $T(n)$ is the $n$-fold lower truncated product of the four-element lattice 
\begin{equation}
 \label{Rhombus}
 \begin{aligned}
 \begin{tikzpicture}
     \node[] (top) at (0,-1) {$0$};
     \node[below left of=top] (0) {$+$};
     \node[below right of=top] (1) {$-$};
     \node[below right of=0] (bottom) {$\varnothing$};
     \draw [thick] (top) -- (0);
     \draw [thick] (top) -- (1);
     \draw [thick] (0) -- (bottom);
     \draw [thick] (1) -- (bottom);
 \end{tikzpicture}
 \end{aligned}
 \end{equation}
with itself. Here, the symbols $"0"$, $"+"$ and $"-"$ denote $[-1,1]$, $\{1\}$ and $\{-1\}$ respectively as the non-empty faces of $J$. 
Thus we can identify the elements of $T(n)$ with ternary words of length $n$ along with a special word~$\varnothing$. A non-special word $t_1t_2\dots t_n$ represents the non-empty face of $\W J(n)=[-1,1]^n$ consisting of all the points~$(x_1,x_2,\dots, x_n)$ such that $x_i=1$ if $t_i=+$, $x_i=-1$ if $t_i=-$ and $-1 \leq x_i\leq 1$ if $t_i=0$. 
The lattice structure on~$T(n)$ is given by ordering non-special words 
 $w=t_1t_2\dots t_n$ and $w'=t'_1t'_2\dots t'_n$
by the relation~$w\leq w'$ iff~$t_i=t'_i$ or $t'_i=0$. The special word $\varnothing$ corresponds to the empty face and it is the smallest element of $T(n)$. 

As of the compositional structure, all non-special words of $T$ form a suboperad that can be identified with the word operad for the semigroup $\{0, +, -\}$, which is the multiplicative semigroup of integers $\{0,\pm 1\}$, where $"+"$ and~$"-"$ correspond to $1$ and $-1$ respectively. Any composition with $\varnothing$ results in $\varnothing$. The suboperad $T'$ consisting of non-special words  has $9$ binary operations with $135$ independent relations between them.
 \begin{center}
  \begin{figure}[H]
  \subcaptionbox{$0+\circ_2 0=00$}
  {
  \resizebox{0.43\textwidth}{!} {


\begin{tikzpicture}[line join = round, line cap = round]
\pgfmathsetmacro{\Depth}{2.4}
\pgfmathsetmacro{\Height}{2.4}
\pgfmathsetmacro{\Width}{2.4}
\pgfmathsetmacro{\xo}{4}
\pgfmathsetmacro{\yo}{1}
\pgfmathsetmacro{\zo}{0}

\coordinate  (SA) at (-1.2-3,0.4,0);
\coordinate  (SB) at (1.2-3,0.4,0);
\coordinate  (SC) at (1.2-3,\Height+0.4,0);
\coordinate  (SD) at (-1.2-3,\Height+0.4,0);

\coordinate  (TA) at (-1.2+4,0.4,0);
\coordinate  (TB) at (1.2+4,0.4,0);
\coordinate  (TC) at (1.2+4,\Height+0.4,0);
\coordinate  (TD) at (-1.2+4,\Height+0.4,0);

\coordinate (LA) at (0,0.4,0);
\coordinate (LB) at (0,\Height+0.4,0);

\draw[opacity=0.8,fill=green!30] (SA)--(SB)--(SC)--(SD)--cycle;
\draw[-, line width=2pt, red] (SC)--(SD);
\draw[draw=none] (SA)--(SB) node[midway, below] {$1$};
\draw[draw=none] (SA)--(SD) node[midway, left] {$2$};

\node at (-1,1.6) {$\Large \times$};

\draw[-, line width=2pt, red] (LA)--(LB) node[midway, left, black] {$1$};

\draw[->, thick] (1,1.6)-- (2,1.6) node[midway, above] {$\Large \circ_2$};

\draw[opacity=0.8,fill=red!30] (TA)--(TB)--(TC)--(TD)--cycle;
\draw[draw=none] (TA)--(TB) node[midway, below] {$1$};
\draw[draw=none] (TA)--(TD) node[midway, left] {$2$};
\end{tikzpicture}

  }
  \hspace{0.5cm}
  \subcaptionbox{$0-\circ_2 -0=0+0$}
  {
  \resizebox{0.48\textwidth}{!} {


\begin{tikzpicture}[line join = round, line cap = round]
\pgfmathsetmacro{\Depth}{2.4}
\pgfmathsetmacro{\Height}{2.4}
\pgfmathsetmacro{\Width}{2.4}
\pgfmathsetmacro{\xo}{4}
\pgfmathsetmacro{\yo}{1}
\pgfmathsetmacro{\zo}{0}
\coordinate (O) at (\xo,\yo,\zo);
\coordinate (A) at (\xo,\yo+\Width,\zo);
\coordinate (B) at (\xo,\yo+\Width,\zo+\Height);
\coordinate (C) at (\xo,\yo,\zo+\Height);
\coordinate (D) at (\xo+\Depth,\yo,\zo);
\coordinate (E) at (\xo+\Depth,\yo+\Width,\zo);
\coordinate (F) at (\xo+\Depth,\yo+\Width,\zo+\Height);
\coordinate (G) at (\xo+\Depth,\yo,\zo+\Height);

\draw[fill=green!30] (O) -- (C) -- (G) -- (D) -- cycle;
\draw[fill=green!30] (O) -- (A) -- (E) -- (D) -- cycle;
\draw[fill=green!30] (O) -- (A) -- (B) -- (C) -- cycle;
\draw[fill=green!30,opacity=0.8] (D) -- (E) -- (F) -- (G) -- cycle;
\draw[fill=green!30,opacity=0.8] (C) -- (B) -- (F) -- (G) -- cycle;
\draw[fill=red!30,opacity=0.8] (A) -- (B) -- (F) -- (E) -- cycle;

\coordinate  (SA) at (-1.2,0.4,0);
\coordinate  (SB) at (1.2,0.4,0);
\coordinate  (SC) at (1.2,\Height+0.4,0);
\coordinate  (SD) at (-1.2,\Height+0.4,0);

\coordinate  (TA) at (-1.2-3.5,0.4,0);
\coordinate  (TB) at (1.2-3.5,0.4,0);
\coordinate  (TC) at (1.2-3.5,\Height+0.4,0);
\coordinate  (TD) at (-1.2-3.5,\Height+0.4,0);

\draw[opacity=0.8,fill=green!30] (SA)--(SB)--(SC)--(SD)--cycle;
\draw[opacity=0.8,fill=green!30] (TA)--(TB)--(TC)--(TD)--cycle;

\draw[-, line width=2pt, red] (SA)--(SD);
\draw[-, line width=2pt, red] (TB)--(TA);

\draw[draw=none] (SA)--(SB) node[midway, below] {$1$};
\draw[draw=none] (SA)--(SD) node[midway, left] {$2$};

\draw[draw=none] (TA)--(TB) node[midway, below] {$1$};
\draw[draw=none] (TA)--(TD) node[midway, left] {$2$};

\draw[draw=none] (C)--(G) node[midway, below] {$1$};
\draw[draw=none] (C)--(B) node[midway, left] {$2$};
\draw[draw=none] (G)--(D) node[midway, right] {$3$};

\draw[->, thick] (1.5,1.6)-- (2.5,1.6) node[midway, above] {\Large$\circ_2$};

\node at (-1.8,1.6) {\Large$\times$};

\end{tikzpicture}

  }  
  \end{figure}
  \end{center}
 The operad $T$ carries an hyperoctahedral symmetry naturally inherited from the corresponding polytopes, and thus may be regarded as a type $B$ object.
 Namely, for $n\geq 1$, the component $T(n)$ is equipped with a (right) action of $\mathbb{B}_{n}:=\S_{n}\ltimes (\bbZ_2)^{n}$ as follows.
 An element~$(\sigma, g) \in \mathbb{B}_{n}$ acts on a non-special word $w=t_1t_2\dots t_{n}\in T(n)$ by permuting the letters via $\sigma^{-1}$, while each $\bbZ_2$-entry $g_i$ of $g$ acts on a letter $t$ by flipping its sign if $g_i=1$. The action of $(\sigma, g)$ on $\varnothing$ is trivial. For non-special $v\in T(n), w\in T(m)$, the equivariance of the partial compositions reads
 \begin{equation}
 \label{Bequiv}
 \begin{cases}
 v\circ_i (w\cdot (\sigma, g)) = (v\circ_i w)\cdot (\sigma', g'),\quad &(\sigma, g)\in \mathbb{B}_{m}\\
 (v\cdot (\sigma, g))\circ_i w = (v\circ_{{\sigma}(i)} w)\cdot (\sigma'', g''),\quad &(\sigma, g)\in \mathbb{B}_n
 \end{cases}.
 \end{equation}
 Here, $(\sigma', g') \in \mathbb{B}_{n+m-1}$ acts via $(\sigma, g)$ on the subword $t_{i}t_{i+1}\dots t_{i+m-2}$ of $v\circ_i w$ and acts identically outside of it.
 The element $(\sigma'', g'')$ swaps the subwords $s=t_{\sigma(i)}t_{\sigma(i)+1}\dots t_{\sigma(i)+m-2}$ and $t=t_it_{i+1}\dots t_{i+m-2}$ within $v\circ_i w$, and then proceeds to act on the word $(v\circ_i w)\setminus s$ via $(\sigma, g)$.  
 \begin{remark}
 The notion of a \emph{hyperoctahedral operad} that the above operad would be an example of can be formalized by considering the monad of rooted non-planar trees with leaves labeled by signed non-zero integers. Such a tree with $n$ leaves labelled by $\{\pm1, \pm 2,\dots, \pm n\}$ comes with a natural action of $\mathbbm{B}_n$. The operadic composition amounts to grafting, which, in addition, is required to be homogeneous with respect to the signs of the labels.
 \end{remark}
\end{example}

\begin{example}
\label{CompBEx}
Let $J=[-1,1]$ be as before, and consider the corresponding non-symmetric $J$-associative operad $\A J$ in $(\Poly, \times)$. That is, for $n\geq 2$, we have $\A J(n)=[-1,1]^{n-1}$ with gap insertions as operadic compositions. 
For the corresponding face lattice operad in $(\Lat, \dtimes)$, which we denote $Comp_B$, the component~ $Comp_B(n)$ is the $(n-1)$-fold lower truncated product of the lattice \eqref{Rhombus} with itself. As before, one can identify the elements of $Comp_B(n)$ with ternary words of length $n-1$ (note the shift by one) consisting of the symbols $0,+,-$, and the special word~$\varnothing$. The lattice structure and the interpretation of ternary words in terms of the hypercube faces are as described in the previous example. 
For non-special words, the partial composition~$w\circ_i w'$ amounts to inserting the entire word $w'$ before the $i$-th letter of $w$ if $i\leq n-1$ or appending it to the right of $w$ otherwise. Any partial composition with $\varnothing$ results in $\varnothing$. 
 \begin{center}
  \begin{figure}[H]
  \subcaptionbox{$+0\circ_3 0=+00$}
  {
  \resizebox{0.48\textwidth}{!} {


\begin{tikzpicture}[line join = round, line cap = round]
\pgfmathsetmacro{\Depth}{2.4}
\pgfmathsetmacro{\Height}{2.4}
\pgfmathsetmacro{\Width}{2.4}
\pgfmathsetmacro{\xo}{4}
\pgfmathsetmacro{\yo}{1}
\pgfmathsetmacro{\zo}{0}
\coordinate (O) at (\xo,\yo,\zo);
\coordinate (A) at (\xo,\yo+\Width,\zo);
\coordinate (B) at (\xo,\yo+\Width,\zo+\Height);
\coordinate (C) at (\xo,\yo,\zo+\Height);
\coordinate (D) at (\xo+\Depth,\yo,\zo);
\coordinate (E) at (\xo+\Depth,\yo+\Width,\zo);
\coordinate (F) at (\xo+\Depth,\yo+\Width,\zo+\Height);
\coordinate (G) at (\xo+\Depth,\yo,\zo+\Height);

\draw[fill=green!30] (O) -- (C) -- (G) -- (D) -- cycle;
\draw[fill=green!30] (O) -- (A) -- (E) -- (D) -- cycle;
\draw[fill=green!30] (O) -- (A) -- (B) -- (C) -- cycle;
\draw[fill=red!30,opacity=0.8] (D) -- (E) -- (F) -- (G) -- cycle;
\draw[fill=green!30,opacity=0.8] (C) -- (B) -- (F) -- (G) -- cycle;
\draw[fill=green!30,opacity=0.8] (A) -- (B) -- (F) -- (E) -- cycle;

\coordinate  (SA) at (-1.2-3,0.4,0);
\coordinate  (SB) at (1.2-3,0.4,0);
\coordinate  (SC) at (1.2-3,\Height+0.4,0);
\coordinate  (SD) at (-1.2-3,\Height+0.4,0);

\coordinate (LA) at (0,0.4,0);
\coordinate (LB) at (0,\Height+0.4,0);

\draw[opacity=0.8,fill=green!30] (SA)--(SB)--(SC)--(SD)--cycle;
\draw[-, line width=2pt, red] (SC)--(SB);

\draw[-, line width=2pt, red] (LA)--(LB) node[midway, left, black] {$1$};
\draw[draw=none] (SA)--(SB) node[midway, below] {$1$};
\draw[draw=none] (SA)--(SD) node[midway, left] {$2$};
\draw[draw=none] (C)--(G) node[midway, below] {$1$};
\draw[draw=none] (C)--(B) node[midway, left] {$2$};
\draw[draw=none] (G)--(D) node[midway, right] {$3$};

\draw[->, thick] (1,1.6)-- (2,1.6) node[midway, above] {$\circ_3$};

\node at (-1,1.6) {$\times$};

\end{tikzpicture}

  }
  \hspace{0.5cm}
  \subcaptionbox{$-\circ_1 00=00-$}
  {
  \resizebox{0.44\textwidth}{!} {


\begin{tikzpicture}[line join = round, line cap = round]
\pgfmathsetmacro{\Depth}{2.4}
\pgfmathsetmacro{\Height}{2.4}
\pgfmathsetmacro{\Width}{2.4}
\pgfmathsetmacro{\xo}{4}
\pgfmathsetmacro{\yo}{1}
\pgfmathsetmacro{\zo}{0}
\coordinate (O) at (\xo,\yo,\zo);
\coordinate (A) at (\xo,\yo+\Width,\zo);
\coordinate (B) at (\xo,\yo+\Width,\zo+\Height);
\coordinate (C) at (\xo,\yo,\zo+\Height);
\coordinate (D) at (\xo+\Depth,\yo,\zo);
\coordinate (E) at (\xo+\Depth,\yo+\Width,\zo);
\coordinate (F) at (\xo+\Depth,\yo+\Width,\zo+\Height);
\coordinate (G) at (\xo+\Depth,\yo,\zo+\Height);

\draw[fill=green!30] (O) -- (C) -- (G) -- (D) -- cycle;
\draw[fill=green!30] (O) -- (A) -- (E) -- (D) -- cycle;
\draw[fill=green!30] (O) -- (A) -- (B) -- (C) -- cycle;
\draw[fill=green!30,opacity=0.8] (D) -- (E) -- (F) -- (G) -- cycle;
\draw[fill=red!30,opacity=0.8] (C) -- (B) -- (F) -- (G) -- cycle;
\draw[fill=green!30,opacity=0.8] (A) -- (B) -- (F) -- (E) -- cycle;

\coordinate  (SA) at (-1.2-0.5,0.4,0);
\coordinate  (SB) at (1.2-0.5,0.4,0);
\coordinate  (SC) at (1.2-0.5,\Height+0.4,0);
\coordinate  (SD) at (-1.2-0.5,\Height+0.4,0);

\coordinate (LA) at (-3,0.4,0);
\coordinate (LB) at (-3,\Height+0.4,0);

\draw[opacity=0.8,fill=red!30] (SA)--(SB)--(SC)--(SD)--cycle;

\draw[-, line width=2pt, black!30!green!50] (LA)--(LB) node[midway, left, black] {$1$};
\node[fill, circle, red, minimum size=6pt, inner sep=0] at (LA) {};

\draw[draw=none] (SA)--(SB) node[midway, below] {$1$};
\draw[draw=none] (SA)--(SD) node[midway, left] {$2$};
\draw[draw=none] (C)--(G) node[midway, below] {$1$};
\draw[draw=none] (C)--(B) node[midway, left] {$2$};
\draw[draw=none] (G)--(D) node[midway, right] {$3$};

\draw[->, thick] (1,1.6)-- (2,1.6) node[midway, above] {$\circ_1$};

\node at (-2.5,1.6) {$\times$};

\end{tikzpicture}

  }  
  \end{figure}
  \end{center}
  The operad $Comp_B$ can be regarded as an operad of 2-colored integer compositions, which, as we argue below, can be considered a type $B$ analogue of ordinary integer compositions. 
  We recall that an $m$-\emph{colored composition} of $n$ is an ordered tuple of positive integers $n_1,n_2,\dots,n_k$ such that~$n=n_1+n_2+\dots+n_k$ and each summand $n_i$ is assigned one of the $m$ labels, or \emph{colors}, $1,2,\dots,m$; cf.\cite{drake-petersen}. For example, $1_1+2_1+1_2$, $1_1+2_2+1_2$ and~$2_1+2_2$ are three different $2$-colored compositions of $4$.
  We adopt the convention \cite[Definition 1(a)]{hopkins-ouvry} that~$n_1$ is always colored by $1$. This mods out the unnecessary, in our case, $\S_m$-action on the colors.

 A bijection between $2$-colored compositions and ternary words can be established as follows. To a {$2$-~colored} composition $n=n_1+n_2+\dots+n_k$ we assign a tiling diagram with the tiles of lengths ${n_1,n_2,\dots,n_k}$ as in Example~\ref{CompAEx}, but with each tile coming now in one of the two colors given by the color of the corresponding summand, adhering to the convention concerning the color of the first part. We read off the corresponding ternary word by going over the inner grid lines of the tiling diagram from left to right and marking such a line by $+$ if it is a tile separator and the tile to the right of it is of color $1$, marking it by $-$ if it is a tile separator and the tile to the right is of color $2$ and marking it by $0$ if it is a non-separating grid line. 
  \begin{center}
  \begin{figure}[H]
    \resizebox{0.25\textwidth}{!} {\begin{tikzpicture}

\fill[fill=red!10,draw=gray] (0,0) rectangle ++(1,1);
\fill[fill=cyan!20,draw=gray] (1,0) rectangle ++(3,1);
\fill[fill=red!10,draw=gray] (4,0) rectangle ++(2,1);

\draw[step=1cm,gray,very thick] (0,0) grid (6,1) rectangle (0,0);

\draw[red, line width=0.8mm] (1,1) -- (1,0);
\draw[red, line width=0.8mm] (3,1) -- (3,0);
\draw[red, line width=0.8mm] (4,1) -- (4,0);

\node at (1,-0.6) {\huge $\mathbf{-}$};
\node at (2,-0.6) {\Large $0$};
\node at (3,-0.6) {\huge $-$};
\node at (4,-0.6) {\huge $+$};
\node at (5,-0.6) {\Large $0$};

\end{tikzpicture}}
  \caption
  {The tiling diagram and the ternary word for the 2-colored composition~ ${6=1_1+2_2+1_2+2_1}$.}
  \end{figure}
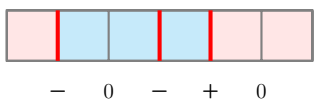
  \end{center} 
  The partial composition $v\circ_i w$ of $2$-colored tiling diagrams amounts to substituting a diagram $w$ into the $i$-th cell of $v$ and flipping the color of the first tile of the inserted diagram if the $i$-th cell of $v$ is of color $2$; otherwise, the original colors are to be retained.
  \begin{center}
  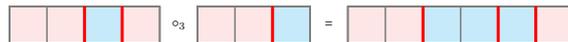
\begin{figure}[H]
    \resizebox{7.5cm}{!} {\begin{tikzpicture}
\fill[fill=red!10,draw=gray] (0,0) rectangle ++(4,1);
\fill[fill=cyan!20,draw=gray] (2,0) rectangle ++(1,1);

\draw[step=1cm,gray,very thick] (0,0) grid (4,1) rectangle (0,0);
\draw[red, line width=0.8mm] (2,1) -- (2,0);
\draw[red, line width=0.8mm] (3,1) -- (3,0);

\node at (4.5, 0.5) {\Large $\circ_3$};

\fill[fill=red!10,draw=gray] (5,0) rectangle ++(2,1);
\fill[fill=cyan!20,draw=gray] (7,0) rectangle ++(1,1);
\draw[step=1cm,gray,very thick] (5,0) grid (8,1) rectangle (5,0);
\draw[red, line width=0.8mm] (7,1) -- (7,0);

\node at (8.5, 0.5) {\Large $=$};

\fill[fill=red!10,draw=gray] (9,0) rectangle ++(2,1);
\fill[fill=cyan!20,draw=gray] (11,0) rectangle ++(3,1);
\fill[fill=red!10,draw=gray] (14,0) rectangle ++(1,1);
\draw[step=1cm,gray,very thick] (9,0) grid (15,1) rectangle (9,0);
\draw[red, line width=0.8mm] (11,1) -- (11,0);
\draw[red, line width=0.8mm] (13,1) -- (13,0);
\draw[red, line width=0.8mm] (14,1) -- (14,0);


\end{tikzpicture}} 
    \caption
  {Evaluating $(2_1+1_2+1_1)\circ_3(2_1+1_2)=(2_1+2_2+1_2+1_1)$ in $Comp_B$.}
  \end{figure}  
  \end{center}
 In terms of $2$-colored tiling diagrams, the covering relation that induces the lattice structure of $Comp_B(n)$ is given by $t\prec t'$ iff a diagram $t$ can be obtained by splitting a single tile $p$ of $t'$ into two adjacent pieces $p_{l}$, $p_r$ followed, possibly, by flipping the color of the new tile on the right $p_r$.
  \begin{center}  
  \begin{figure}[H]
    \resizebox{0.6\textwidth}{!} {\begin{tikzpicture}

\newcommand{\tile}[2]
{
 \fill[fill=red!10,draw=gray, thick] (0,0) rectangle ++(3,1);  
 \foreach \i/\j in {#2} \fill[fill=cyan!20] (\i,0) rectangle ++(\j,1);  
 \draw[step=1cm,gray!70] (0,0) grid (3,1) rectangle (0,0); 
 \foreach \i in {#1} \draw[red, line width=0.8mm] (\i,1) -- (\i,0); 
}

\node (top) at (0,3) {\tikz{\tile{}{}}};

\matrix (T) [matrix of nodes, column sep=0.5cm,row sep=0.5cm]
{
   \tikz{\tile{1}{}} & \tikz{\tile{2}{}} & \tikz{\tile{2}{2/1}} & \tikz{\tile{1}{1/2}} \\
   \tikz{\tile{1,2}{}} & \tikz{\tile{1,2}{2/1}} & \tikz{\tile{1,2}{1/1}} & \tikz{\tile{1,2}{1/2}} \\
};

\node (bottom) at (0,-3) {\Huge$\varnothing$};


\foreach \j in {1,...,4} \draw [shorten <=-2pt, shorten >=-2pt] (top) to (T-1-\j);

\foreach \i/\j in {1/1, 1/2, 2/1, 2/3, 3/2, 3/4, 4/3, 4/4} 
\draw [preaction={draw=white, -,line width=6pt},shorten <=-2pt, shorten >=-3pt]  (T-1-\i) to (T-2-\j);

\foreach \j in {1,...,4} \draw [shorten <=-2pt, shorten >=-2pt] (bottom) to (T-2-\j);

\end{tikzpicture}}
  \caption
  {The lattice $Comp_B(3)$.}
  \end{figure}
  \end{center}  
  To justify the assertion that 2-colored compositions are a type $B$ analogue of integer compositions, we relate them to set partitions of type $B$. Recall that a type $B$ partition $ \pi$ of $[\pm n]$ is a partition of the set~$[\pm n]=\{\pm 1, \pm2,\dots,\pm n\}$ such that (1) for any block $\beta\in \pi$, the block $-\beta$ defined by negating all the elements of $\beta$ is in $\pi$; (2) there is at most one block $\beta$, called the zero block, such that $\beta=-\beta$, cf.\cite{reiner}.

  First, we observe that for any $n\geq 1$, there is a bijection between ordinary integer compositions of $n$ and (type~$A$) partitions of the set $[n]=\{1,2,\dots,n\}$ into $k\leq 2$ blocks. Specifically, given a two-block partition~${[n]\dashv \{\pi_1, \pi_2\}}$ we assign to it the unique block $\pi_i$ such that $n\notin \pi_i$. To the trivial single-block partition we assign the empty set. This maps all partitions of $[n]$ with $k\leq 2$ blocks bijectively to the subsets of~$[n-1]=\{1,2,\dots,n-1\}$. The latter can be identified with integer compositions of $n$ as we have recalled above in Example~\ref{CompAEx}.
  
  By analogy, we take it that the data of a \emph{type $B$ integer composition of $n$} is that of a type $B$ partition $\pi$ of the set $[\pm n]$ with $k\leq 2$ non-zero blocks. Additionally, if the zero block $\pi_0$ exists and $n\in \pi_0$, we require~$\pi_0=[\pm n]$. Now, if $\pi$ is not a trivial single-block partition, there exists a unique non-zero block $\pi_i$ of $\pi$ such that $n\notin \pi_i$. In such a case we construct the ternary word $t=t_1t_2\dots t_{n-1}$ encoding a $2$-colored integer composition of $n$ by setting $t_k=+$ if $k\in \pi_i$, $t_k=-$ if $-k\in \pi_i$ and $t_k=0$ if $k\in\pi_0$ for all $1\leq k\leq n-1$. Otherwise, if $\pi$ is trivial, we assign to it the special word $\varnothing\in Comp_B(n)$. Such a correspondence is bijective.
   \begin{center}
  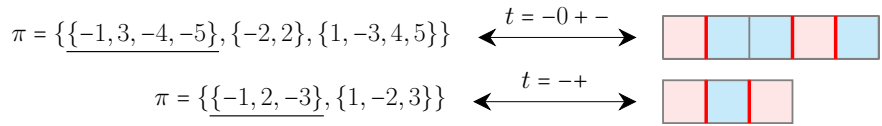
\begin{figure}[H]
    \resizebox{0.7\textwidth}{!} {\begin{tikzpicture}

\node (partition) at (-10, 0.5) {\huge $\pi=\{\underline{\{-1,3,-4,-5\}},\{-2,2\},\{1,-3,4,5\}\}$};

\fill[fill=red!10] (0,0) rectangle ++(1,1);
\fill[fill=cyan!20] (1,0) rectangle ++(2,1);
\fill[fill=red!10] (3,0) rectangle ++(1,1);
\fill[fill=cyan!20] (4,0) rectangle ++(1,1);

\node (tiles) at (0,0.5) {};

\draw[step=1cm,gray,very thick] (0,0) grid (5,1) rectangle (0,0);

\draw[red, line width=0.8mm] (1,1) -- (1,0);
\draw[red, line width=0.8mm] (3,1) -- (3,0);
\draw[red, line width=0.8mm] (4,1) -- (4,0);

\draw [{Stealth[length=4mm, width=4mm]}-{Stealth[length=4mm, width=4mm]}, shorten >= 0.5cm, shorten <= 0.5cm](partition) -- node [above=2mm] {\huge $t=-0+-$} (tiles);


\node (partition2) at (-8.4, -1) {\huge 
$\pi=\{\underline{\{-1,2,-3\}},\{1,-2,3\}\}$};

\filldraw[fill=red!10, draw=gray, very thick] (0,-1.5) rectangle ++(1,1);
\filldraw[fill=cyan!20, draw=gray, very thick] (1,-1.5) rectangle ++(1,1);
\filldraw[fill=red!10, draw=gray, very thick] (2,-1.5) rectangle ++(1,1);

\draw[red, line width=0.8mm] (1,-1.5) -- (1,-0.5);
\draw[red, line width=0.8mm] (2,-1.5) -- (2,-0.5);

\node (tiles2) at (0,-1) {};

\draw [{Stealth[length=4mm, width=4mm]}-{Stealth[length=4mm, width=4mm]}, shorten >= 0.5cm, shorten <= 0.5cm](partition2) -- node [above=2mm] {\huge $t=-+$} (tiles2);

\end{tikzpicture}}
  \caption
  {Type $B$ set partitions for $2$-colored compositions ${5=1_1+2_2+1_1+1_2}$ and ~${3=1_1+1_2+1_1}$.}
  \end{figure}
  \end{center}
  The operad $Comp_B$ comes with an involution that flips the color of each, but the first, summand of a given 2-colored composition. In terms of ternary words, it flips the sign of each occurrence of $\pm$. For each~$n\geq 2$, it is an automorphism of the lattice $Comp_B(n)$.
\end{example}

\begin{example}
\label{CompDEx}
  The operad $Comp_B$ contains a lattice suboperad $Comp_D$, which is defined by the following conditions: (1) for all $n \geq 2$, the symbol $0$ appears in a ternary word $t \in Comp_D(n)$ either at least twice or does not appear at all; (2) the subset of $Comp_D(n)$ consisting of ternary words without the symbol $0$ is the largest subset of $Comp_B(n)$ such that any two words in this subset differ in at least two positions, and $++\dots + \in Comp_D(n)$.  
  In particular,
  \begin{align*}
  Comp_D(2)&=\{+,\varnothing\}\\
  Comp_D(3)&=\{++, --, 00,\varnothing\}\\
  Comp_D(4)&=\{+++,+--,-+-,--+,00+,00-,0+0,0-0,+00,-00,000,\varnothing\}.
  \end{align*}  
  The subset of $Comp_D(n)$ defined by condition (2) is unique. It can be characterized as the set of all words in symbols $+$ and $-$ of length $n-1$ that have an even number of $-$'s.
  
  The operad $Comp_D$ can be regarded as an operad of \emph{integer compositions of type $D$} motivated, in view of the correspondence between restricted set partitions and integer compositions outlined in the previous example, by the notion of a set partition of type~$D$~\cite[Section 4]{reiner}. We recall that a set partition $\pi$ is said to be of type $D$ if it is of type $B$ and the zero block $\pi_0$, if present, is not a single pair $\{i,-i\}$. Condition (1) reflects this particular property, while (2) ensures that for all $n\geq 2$, $Comp_D(n)$ is indeed a lattice, rather than just a meet-semilattice that would be the case had we omitted it. 
  \end{example}

\begin{example}
Let $J=[-1,1]$ be considered now as an object in $(\Poly_0,\oplus)$.
The $n$-fold direct sum $J^{\oplus n}$ is the $n$-dimensional hyperoctahedron. 
Furthermore, we may consider $J$ with its ordinary multiplicative structure as a semigroup in $(\Poly_0,\oplus)$. That said, due to $\oplus$ not being cartesian, the word operad construction does not apply here directly. Nevertheless, by polytopal duality between hypercubes and hyperoctahedra, we can obtain the corresponding lattice operad by dualizing the face lattices of Example~\ref{CubeWordEx}. Namely, consider the operad $T^\vee$ in $(\Lat, \utimes)$, where for $n\geq 1$,~$T^\vee(n)$ is the $n$-fold \textit{upper} truncated product of the lattice \eqref{Rhombus} with itself. 
Upon changing the notation for the top and the bottom elements of \eqref{Rhombus} to $\mathbbm{1}$ and $0$ respectively, the elements of~$T^\vee(n)$ can be identified with ternary words of length $n$ consisting of the symbols $+,-,0$ and a special word~$\mathbbm{1}$. A non-special word $w=t_1t_2\dots t_{n}\in T^\vee(n)$ represents the face of the $n$-dimensional hyperoctahedron consisting of all the points $(x_1,x_2,\dots,x_n)\in\mathbb{R}^n$ such that $x_i \geq 0$ if $t_i = +$, $x_i \leq 0$ if $t_i=-$ and $x_i=0$ if $t_i=0$ for $1 \leq i\leq n$, all subject to $\sum\limits_{i=1}^{n}|x_i|=1$.
In particular, the empty face is encoded by a string of zeros. The special word $\mathbbm{1}\in T^\vee(n)$ represents the entire polytope. 
The lattice structure on non-special words is induced by the relations 
$0 \leq +$, $0 \leq -$, and in each arity $n\geq 1$, the special word $\mathbbm{1}\in T^\vee(n)$ is set to be the largest element.  
The compositional structure for non-special words of $T^\vee$ is that of the word operad for the multiplicative semigroup~$\{0,\pm\}$ as in Example~\ref{CubeWordEx}. Any composition with $\mathbbm{1}$ results in $\mathbbm{1}$.
Thus, operads $T$ and $T^\vee$ are isomorphic as set-valued operads and dual to each other as lattice operads. In particular, $T^\vee$ is equipped with hyperoctahedral symmetry, which, in terms of ternary words, amounts to permutting the letters and flipping the signs subject to the equivariance condition \eqref{Bequiv}.
 \begin{center}
  \begin{figure}[H]
  \subcaptionbox{$-\circ_1 -+=+-$}
  {
  \resizebox{0.38\textwidth}{!} {


\begin{tikzpicture}[z=-6.5]

\coordinate (LA) at (-5.5,1,0);
\coordinate (LB) at (-5.5,-1,0);

\coordinate (SA) at (-3.7,1,0);
\coordinate (SB) at (-2.7,0,0);
\coordinate (SC) at (-3.7,-1,0);
\coordinate (SD) at (-4.7,0,0);

\coordinate (TA) at (0,1,0);
\coordinate (TB) at (-1,0,0);
\coordinate (TC) at (0,-1,0);
\coordinate (TD) at (1,0,0);

\draw[-, line width=2pt, black!30!green!50] (LA)--(LB) node[midway, left, black] {$1$};
\node[fill, circle, red, minimum size=4pt, inner sep=0] at (LB) {};

\draw[opacity=0.8,fill=green!30] (SA)--(SB)--(SC)--(SD)--cycle;
\draw[-, line width=2pt, red] (SA)--(SD);

\draw[->, thick] (-2.4,0)-- (-1.4,0) node[midway, above] {$\circ_1$};
\node at (-2.6,0,0) {$1$};
\node at (-3.7,1.2,0) {$2$};

\node at (-5.1,0) {$\Large\oplus$};

\draw[opacity=0.8,fill=green!30] (TA)--(TB)--(TC)--(TD)--cycle;
\draw[-, line width=2pt, red] (TC)--(TD);
\node at (1.2,0) {$1$};
\node at (0,1.2,0) {$2$};

\end{tikzpicture}

  }    
  \subcaptionbox{$-+\circ_2 ++=-++$}
  {
  \resizebox{0.48\textwidth}{!} {


\begin{tikzpicture}[z=-6.5]
\coordinate (A1) at (2.5,0,-1);
\coordinate (A2) at (1.5,0,0);
\coordinate (A3) at (2.5,0,1);
\coordinate (A4) at (3.5,0,0);
\coordinate (B1) at (2.5,1,0);
\coordinate (C1) at (2.5,-1,0);

\coordinate (SA) at (-4.5,1,0);
\coordinate (SB) at (-3.5,0,0);
\coordinate (SC) at (-4.5,-1,0);
\coordinate (SD) at (-5.5,0,0);

\coordinate (TA) at (-1.7,1,0);
\coordinate (TB) at (-0.7,0,0);
\coordinate (TC) at (-1.7,-1,0);
\coordinate (TD) at (-2.7,0,0);

\draw[opacity=0.8,fill=green!30] (SA)--(SB)--(SC)--(SD)--cycle;
\draw[-, line width=2pt, red] (SA)--(SD);
\node at (-3.3,0,0) {$1$};
\node at (-4.5,1.2,0) {$2$};

\node at (-3,0) {$\Large\oplus$};

\draw[opacity=0.8,fill=green!30] (TA)--(TB)--(TC)--(TD)--cycle;
\draw[-, line width=2pt, red] (TA)--(TB);
\node at (-0.5,0,0) {$1$};
\node at (-1.7,1.2,0) {$2$};

\draw[->, thick] (-0.2,0)-- (1.2,0) node[midway, above] {$\circ_2$};

\draw (A1) -- (A2) -- (B1) -- cycle;
\draw (A4) -- (A1) -- (B1) -- cycle;
\draw (A1) -- (A2) -- (C1) -- cycle;
\draw (A4) -- (A1) -- (C1) -- cycle;
\draw [fill opacity=0.8,fill=red!30] (A2) -- (A3) -- (B1) -- cycle;
\draw [fill opacity=0.8,fill=green!30] (A3) -- (A4) -- (B1) -- cycle;
\draw [fill opacity=0.8,fill=green!30] (A2) -- (A3) -- (C1) -- cycle;
\draw [fill opacity=0.8,fill=green!30] (A3) -- (A4) -- (C1) -- cycle;

\node at (2.2, -0.4, 0) {$3$};
\node at (3.7, 0, 0) {$1$};
\node at (2.5, 1.2, 0) {$2$};

\end{tikzpicture}

  }  
  \end{figure}
  \end{center}

\end{example}

\begin{example}
\label{OctCompBEx}
The dual of Example~\ref{CompBEx} is obtained as follows. Let $J=[-1,-1]$ be considered as object of $(\Poly_0,\oplus)$. Then the corresponding $J$-associative operad $\A J$ has the $(n-1)$-dimensional hyperoctahedron as its component of arity $n$. By taking the face lattices, we obtain the operad $Comp_B^\vee$ in $(\Lat, \utimes)$, where for $n\geq 2$, $Comp_B^\vee(n)$ is the $(n-1)$-fold upper truncated product of the lattice \eqref{Rhombus} with itself. As in the previous example, upon renaming the entries of \eqref{Rhombus}, the elements of~$Comp_B^\vee(n)$ can be identified with the words of length $n-1$ consisting of the symbols $+,-,0$, and a special word~$\mathbbm{1}$. For the non-special words, the compositional structure is that of the $M$-associative operad for $M=\{0,\pm\}$. Any composition with $\mathbbm{1}$ results in $\mathbbm{1}$. 
For each $n\geq 2$, the lattice structure of $Comp_B^\vee(n)$ is dual to that of $Comp_B(n)$, while the operads are isomorphic as operads in $\Sets$. Just like $Comp_B$, the operad $Comp_B^\vee$ comes with an involution that flips the sign of each letter of a ternary word.
   \begin{figure}[H]
  \subcaptionbox{$-\circ_2 +0=-+0$}
  {
  \resizebox{0.43\textwidth}{!} {


\begin{tikzpicture}[z=-6.5]
\coordinate (A1) at (0,0,-1);
\coordinate (A2) at (-1,0,0);
\coordinate (A3) at (0,0,1);
\coordinate (A4) at (1,0,0);
\coordinate (B1) at (0,1,0);
\coordinate (C1) at (0,-1,0);

\coordinate (LA) at (-5.5,1,0);
\coordinate (LB) at (-5.5,-1,0);

\coordinate (SA) at (-3.7,1,0);
\coordinate (SB) at (-2.7,0,0);
\coordinate (SC) at (-3.7,-1,0);
\coordinate (SD) at (-4.7,0,0);

\draw[-, line width=2pt, black!30!green!50] (LA)--(LB) node[midway, left, black] {$1$};
\node[fill, circle, red, minimum size=4pt, inner sep=0] at (LB) {};

\draw[opacity=0.8,fill=green!30] (SA)--(SB)--(SC)--(SD)--cycle;
\node[fill, circle, red, minimum size=4pt, inner sep=0] (V) at (SB) {};

\draw[->, thick] (-2.4,0)-- (-1.4,0) node[midway, above] {$\circ_2$};
\node at (-2.7,0.4,0) {$1$};
\node at (-3.7,1.2,0) {$2$};

\node at (-5.1,0) {$\Large\oplus$};

\draw (A1) -- (A2) -- (B1) -- cycle;
\draw (A4) -- (A1) -- (B1) -- cycle;
\draw (A1) -- (A2) -- (C1) -- cycle;
\draw (A4) -- (A1) -- (C1) -- cycle;
\draw [fill opacity=0.8,fill=green!30] (A2) -- (A3) -- (B1) -- cycle;
\draw [fill opacity=0.8,fill=green!30] (A3) -- (A4) -- (B1) -- cycle;
\draw [fill opacity=0.8,fill=green!30] (A2) -- (A3) -- (C1) -- cycle;
\draw [fill opacity=0.8,fill=green!30] (A3) -- (A4) -- (C1) -- cycle;

\draw[-, line width=2pt, red] (B1)--(A2);

\node at (-0.3, -0.4, 0) {$3$};
\node at (1.2, 0, 0) {$1$};
\node at (0, 1.2, 0) {$2$};

\end{tikzpicture}

  }  
  \vspace{0.5cm}
  \subcaptionbox{$+-\circ_3 +=+-+$}
  {
  \resizebox{0.43\textwidth}{!} {


\begin{tikzpicture}[z=-6.5]
\coordinate (A1) at (0,0,-1);
\coordinate (A2) at (-1,0,0);
\coordinate (A3) at (0,0,1);
\coordinate (A4) at (1,0,0);
\coordinate (B1) at (0,1,0);
\coordinate (C1) at (0,-1,0);

\coordinate (LA) at (-2.7,1,0);
\coordinate (LB) at (-2.7,-1,0);

\coordinate (SA) at (-4.5,1,0);
\coordinate (SB) at (-3.5,0,0);
\coordinate (SC) at (-4.5,-1,0);
\coordinate (SD) at (-5.5,0,0);

\draw[-, line width=2pt, black!30!green!50] (LA)--(LB) node[midway, right, black] {$1$};
\node[fill, circle, red, minimum size=4pt, inner sep=0] at (LA) {};

\draw[opacity=0.8,fill=green!30] (SA)--(SB)--(SC)--(SD)--cycle;
\draw[-, line width=2pt, red] (SB)--(SC);

\draw[->, thick] (-2.2,0)-- (-1.2,0) node[midway, above] {$\circ_3$};
\node at (-3.3,0,0) {$1$};
\node at (-4.5,1.2,0) {$2$};

\node at (-3,0) {$\Large\oplus$};

\draw (A1) -- (A2) -- (B1) -- cycle;
\draw (A4) -- (A1) -- (B1) -- cycle;
\draw (A1) -- (A2) -- (C1) -- cycle;
\draw (A4) -- (A1) -- (C1) -- cycle;
\draw [fill opacity=0.8,fill=green!30] (A2) -- (A3) -- (B1) -- cycle;
\draw [fill opacity=0.8,fill=green!30] (A3) -- (A4) -- (B1) -- cycle;
\draw [fill opacity=0.8,fill=green!30] (A2) -- (A3) -- (C1) -- cycle;
\draw [fill opacity=0.8,fill=red!30] (A3) -- (A4) -- (C1) -- cycle;

\node at (-0.3, -0.4, 0) {$3$};
\node at (1.2, 0, 0) {$1$};
\node at (0, 1.2, 0) {$2$};

\end{tikzpicture}

  }  
  \end{figure} 
\end{example}

\begin{example}
A natural example of an operad with symmetry of the dihedral type $I_2$ arises upon taking the operation of gluing two convex polygons along an edge as an operadic composition. The polygons here are to be considered up to combinatorial equivalence. 
Operads with this particular type of symmetry are embodied by the notion of a \textit{dihedral operad} \cite{dupont-vallette}, interpolating between that of a cyclic operad and a {non-$\Sigma$} cyclic operad. Also, in \cite{markl-envelope} an equivalent of the polygon-gluing operad, albeit only with the rotational symmetry involved, is treated as the terminal non-$\Sigma$ cyclic operad. Note that partial compositions in this framework are double-indexed.
   \begin{center}
  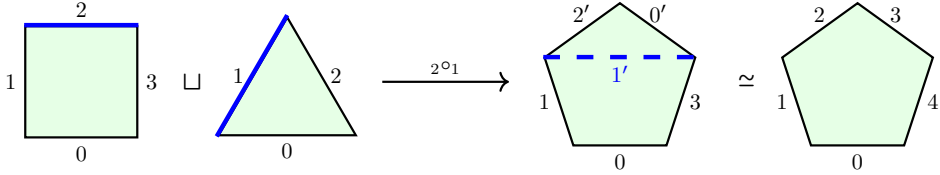
\begin{figure}[H]
    \resizebox{0.8\textwidth}{!} {\def\penta{\draw[black, fill=green!10] (A) -- (B) -- (C) -- (D) -- (E) -- cycle;}
\def\square{\draw[black, fill=green!10] (SA) -- (SB) -- (SC) -- (SD) -- cycle;}

\begin{tikzpicture}

\coordinate (A) at (0,1);
\coordinate (B) at (0.951057, 0.309017);
\coordinate (C) at (0.587785, -0.809017);
\coordinate (D) at (-0.587785, -0.809017);
\coordinate (E) at (-0.951057, 0.309017);

\coordinate (SA) at (0,0);
\coordinate (SB) at (1,0);
\coordinate (SC) at (1,-1);
\coordinate (SD) at (0,-1);


\node[draw, regular polygon, regular polygon sides=4, inner sep=0pt, minimum height = 1cm, fill=green!10] (square1) at (-3, 0) {};

\node[scale=0.4, below] at (square1.side 3) {$0$};
\node[scale=0.4, left] at (square1.side 2) {$1$};
\node[scale=0.4, above] at (square1.side 1) {$2$};
\node[scale=0.4, right] at (square1.side 4) {$3$};

\draw[blue, thick] (square1.corner 1) -- (square1.corner 2);

\node[draw, regular polygon, regular polygon sides=3, inner sep=0pt, minimum height = 1cm, fill=green!10] (triangle1) at (-1.7, -0.09) {};

\node[scale=0.6] at (-2.3, 0) {$\sqcup$};

\node[scale=0.4, below] at (triangle1.side 2) {$0$};
\node[scale=0.4, left] at (triangle1.side 1) {$1$};
\node[scale=0.4, right] at (triangle1.side 3) {$2$};

\draw[blue, thick] (triangle1.corner 1) -- (triangle1.corner 2);

\draw[-{>[scale=0.7]}] (-1.1, 0)-- (-0.3,0) node[midway, above, scale=0.4] {${}_2\circ_{1}$};

\node[draw, regular polygon, regular polygon sides=5, inner sep=0pt, minimum height = 1cm, fill=green!10] (penta1) at (0.4, 0) {};

\draw[blue, thick, dashed] (penta1.corner 2) -- (penta1.corner 5) node [midway, below, scale=0.4] {$1'$};

\node[scale=0.4, above] at (penta1.side 1) {$2'$};
\node[scale=0.4, left] at (penta1.side 2) {$1$};
\node[scale=0.4, below] at (penta1.side 3) {$0$};
\node[scale=0.4, right] at (penta1.side 4) {$3$};
\node[scale=0.4, above] at (penta1.side 5) {$0'$};

\node[scale=0.6] at (1.2, 0) {$\simeq$};

\node[draw, regular polygon, regular polygon sides=5, inner sep=0pt, minimum height = 1cm, fill=green!10] (penta2) at (1.9, 0) {};

\node[scale=0.4, above] at (penta2.side 1) {$2$};
\node[scale=0.4, left] at (penta2.side 2) {$1$};
\node[scale=0.4, below] at (penta2.side 3) {$0$};
\node[scale=0.4, right] at (penta2.side 4) {$4$};
\node[scale=0.4, above] at (penta2.side 5) {$3$};

\end{tikzpicture}}
  \caption  
  {Gluing polygons up to combinatorial equivalence.}
  \end{figure}
  \end{center}
  \vspace{-0.4in}
Another aspect in which the example that we are about to present differs from the rest is that the associated operad is to be defined in the category of lattices and monotone poset mappings, rather than lattice homomorphisms. 
The monoidal structure that we consider on this category is the disjoint union of lattices as defined in Section \ref{CatLat}.

For $n\geq 3$, let $P(n)$ be the face lattice of a regular $n$-gon. That is, $P(n)$ is a lattice of height four with~$2n+2$ elements, where the elements $\{v_0,\dots,v_{n-1}\}$ at the level $1$ and the elements $\{e_0,\dots,e_{n-1}\}$ at the level two are ordered by setting~$v_i \leq e_j$ iff $i=j-1$ or $i=j$; here, the indicies~$i,j$ are considered as elements of the cyclic group $\mathbb{Z}_n$. In particular, we will assume that $v_i$'s and $e_j$'s are themselves ordered cyclically via~${e_0 \prec e_1 \prec \dots e_{n-1} \prec e_0}$ and~
${v_0 \prec v_1 \prec \dots v_{n-1} \prec v_0}$.
Then, for~$n, m\geq 2$ and $1 \leq a \leq n$, $1 \leq b \leq m$, 
the partial composition ${{}_a\circ_b: P(n)\sqcup P(m)\to P(n+m-2)}$ amounts to merging the cyclic orders for $v_i$'s and $e_j$'s on~$P(n)$ and $P(m)$ in two slightly different ways.

Namely, consider the linear order representatives 
$
 e_{a-n-1}\prec \dots \prec e_{a-1} \prec e_a
$
and
$
 e_b' \prec e_{b+1}' \dots \prec e_{b+m-1}'
$
for the cyclic orders of the corresponding elements of $P(n)$ and $P(m)$. We define the linear order
\begin{align}
\label{CycLin}
e_{a-n-1}\prec \dots \prec e_{a-1} \prec e_{b+1}' \dots \prec e_{b+m-1}' 
\end{align}
on the union of these elements with $e_a$ and $e_b'$ discarded and fix an order-preserving bijection $\phi_{ab}$ from \eqref{CycLin} to the linearly ordered set
\[
 e_{a-n-1}\prec e_{a-n}\prec \dots \prec e_{a+m-2},
\]
where the indicies are considered modulo $n+m-2$. 
We complete the latter to a cyclically ordered set and identify it with the corresponding subset of $P(n+m-2)$.

Similarly, upon picking up the linear order representatives 
${v_{a-1}\prec v_{a}\prec \dots \prec v_{a+n-2}}$ and 
${v_{b}'\prec v_{b+1}'\prec \dots \prec v'_{b-1}}$ 
for the cyclic orders of the corresponding elements of $P(n)$ and $P(m)$,
we consider the linearly ordered set 
\[
 v_{a-1} \prec v'_{b+1} \prec v'_{b+2} \prec \dots \prec v'_{b-2} \prec v_{a} \prec v_{a+1} \prec \dots \prec v_{a+n-2}
\]
with an order-preserving bijection $\psi_{ab}$ to the linearly ordered set
${v_{a-1} \prec v_{a} \prec \dots \prec v_{a+n+m-4}}$,
where the indicies are taken modulo~$n+m-2$. Upon completing it to a cyclically ordered set, we identify it with the corresponding subset of~$P(n+m-2)$.

Now, the partial composition ${{}_a\circ_b: P(n)\sqcup P(m)\to P(n+m-2)}$ is defined as follows:
\begin{align*}
{}_a\circ_b(t)=
\begin{cases}
\mathbf{1},\quad &t=e_a\text{ or }t=e'_b\\
v_{a-1},\quad &t=v'_b\\
v_{a+m-1},\quad &t=v'_{b-1}\\
\phi_{ab}(t),\quad &t=e_j\text{ or }t=e'_j\text{ and }t\neq e_a, t\neq e'_b\\
\psi_{ab}(t),\quad &t=v_i\text{ or }t=v'_i\text{ and }t\neq v'_b, t\neq v'_{b-1}
\end{cases}
\end{align*}
It is a monotonic map of lattices that fails to be a lattice homomorphism due to ${\mathbf{0}={{}_a\circ_b}(e_a \wedge e'_b)\neq {{}_a\circ_b}(e_a) \wedge {{}_a\circ_b}(e'_b)=\mathbf{1}}$.

For $n\geq 3$, the dihedral group $\mathbbm{D}\text{ih}_n$ given in terms of its standard presentation $\langle r, s|\,r^n=s^2=(rs)^2=1 \rangle$ acts on $P(n)$ via $t_{i}\cdot r=t_{i+1}$ and $t_{i}\cdot s=t_{n-i-1}$, where $t_i=e_i, v_i$. The equivariance conditions for ${{}_a\circ_b}$ can be read off from the edge-gluing diagrams.
\begin{center}
  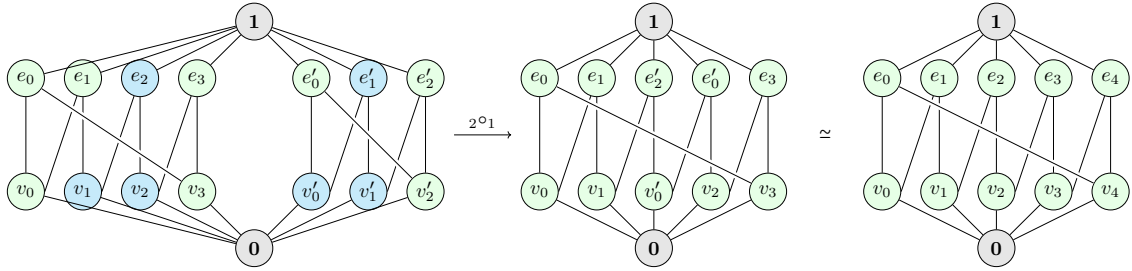
\begin{figure}[H]
    \resizebox{0.9\textwidth}{!} {\begin{tikzpicture}

\node[circle, inner sep=1pt, draw=black, fill=gray!20, minimum size=0.65cm] (top) at (0,2) {$\mathbf{1}$};
\node[circle, inner sep=1pt, draw=black, fill=gray!20, minimum size=0.65cm] (bottom) at (0,-2) {$\mathbf{0}$};
\node[fill=green!10, draw=black,=green!10, draw=black, minimum size=0.65cm, circle, inner sep=1pt](ae0) at (-4, 1) {$e_0$};
\node[fill=green!10, draw=black, minimum size=0.65cm, circle, inner sep=1pt](ae1) at (-3, 1) {$e_1$};
\node[fill=cyan!20, draw=black, minimum size=0.65cm, circle, inner sep=1pt](ae2) at (-2, 1) {$e_2$};
\node[fill=green!10, draw=black, minimum size=0.65cm, circle, inner sep=1pt](ae3) at (-1, 1) {$e_3$};

\node[fill=green!10, draw=black, minimum size=0.65cm, circle, inner sep=1pt](av0) at (-4, -1) {$v_0$};
\node[fill=cyan!20, draw=black, minimum size=0.65cm, circle, inner sep=1pt](av1) at (-3, -1) {$v_1$};
\node[fill=cyan!20, draw=black, minimum size=0.65cm, circle, inner sep=1pt](av2) at (-2, -1) {$v_2$};
\node[fill=green!10, draw=black, minimum size=0.65cm, circle, inner sep=1pt](av3) at (-1, -1) {$v_3$};

\foreach \i in {0,...,3} \draw (ae\i) -- (av\i);
\foreach \i [evaluate=\i as \prev using \i-1] in {1,...,3} \draw (ae\i) -- (av\prev);
\draw [preaction={draw=white, -, line width=3pt}] (ae0) -- (av3);


\node[fill=green!10, draw=black,=green!10, draw=black, minimum size=0.65cm, circle, inner sep=1pt](be0) at (1, 1) {$e_0'$};
\node[fill=cyan!20, draw=black, minimum size=0.65cm, circle, inner sep=1pt](be1) at (2, 1) {$e_1'$};
\node[fill=green!10, draw=black, minimum size=0.65cm, circle, inner sep=1pt](be2) at (3, 1) {$e_2'$};

\node[fill=cyan!20, draw=black, minimum size=0.65cm, circle, inner sep=1pt](bv0) at (1, -1) {$v_0'$};
\node[fill=cyan!20, draw=black, minimum size=0.65cm, circle, inner sep=1pt](bv1) at (2, -1) {$v_1'$};
\node[fill=green!10, draw=black, minimum size=0.65cm, circle, inner sep=1pt](bv2) at (3, -1) {$v_2'$};

\foreach \i in {0,...,2} \draw (be\i) -- (bv\i);
\foreach \i [evaluate=\i as \prev using \i-1] in {1,...,2} \draw (be\i) -- (bv\prev);
\draw [preaction={draw=white, -, line width=3pt}] (be0) -- (bv2);

\foreach \i in {0,...,2} 
{\draw (be\i) -- (top);
 \draw (bottom) -- (bv\i);}

\foreach \i in {0,...,3} 
{\draw  (ae\i) to  (top);
 \draw  (av\i) to (bottom);
 }

 \draw[-{>[scale=0.7]}] (3.5, 0)-- (4.5,0) node[midway, above, scale=1] {${}_2\circ_{1}$};


\node[fill=green!10, draw=black,=green!10, draw=black, minimum size=0.65cm, circle, inner sep=1pt](ae0) at (5, 1) {$e_0$};
\node[fill=green!10, draw=black, minimum size=0.65cm, circle, inner sep=1pt](ae1) at (6, 1) {$e_1$};
\node[fill=green!10, draw=black, minimum size=0.65cm, circle, inner sep=1pt](ae2) at (7, 1) {$e_2'$};
\node[fill=green!10, draw=black, minimum size=0.65cm, circle, inner sep=1pt](ae3) at (8, 1) {$e_0'$};
\node[fill=green!10, draw=black, minimum size=0.65cm, circle, inner sep=1pt](ae4) at (9, 1) {$e_3$};

\node[fill=green!10, draw=black, minimum size=0.65cm, circle, inner sep=1pt](av0) at (5, -1) {$v_0$};
\node[fill=green!10, draw=black, minimum size=0.65cm, circle, inner sep=1pt](av1) at (6, -1) {$v_1$};
\node[fill=green!10, draw=black, minimum size=0.65cm, circle, inner sep=1pt](av2) at (7, -1) {$v_0'$};
\node[fill=green!10, draw=black, minimum size=0.65cm, circle, inner sep=1pt](av3) at (8, -1) {$v_2$};
\node[fill=green!10, draw=black, minimum size=0.65cm, circle, inner sep=1pt](av4) at (9, -1) {$v_3$};

\node[circle, inner sep=1pt, draw=black, fill=gray!20, minimum size=0.65cm] (top) at (7,2) {$\mathbf{1}$};
\node[circle, inner sep=1pt, draw=black, fill=gray!20, minimum size=0.65cm] (bottom) at (7,-2) {$\mathbf{0}$};

\foreach \i in {0,...,4} 
{\draw  (ae\i) to  (top);
 \draw  (av\i) to (bottom);
 }

\foreach \i in {0,...,4} \draw (ae\i) -- (av\i);
\foreach \i [evaluate=\i as \prev using \i-1] in {1,...,4} \draw (ae\i) -- (av\prev);

\draw [preaction={draw=white, -, line width=3pt}] (ae0) -- (av4);


\node at (10, 0) {\Large$\simeq$};

\node[fill=green!10, draw=black,=green!10, draw=black, minimum size=0.65cm, circle, inner sep=1pt](ae0) at (11, 1) {$e_0$};
\node[fill=green!10, draw=black, minimum size=0.65cm, circle, inner sep=1pt](ae1) at (12, 1) {$e_1$};
\node[fill=green!10, draw=black, minimum size=0.65cm, circle, inner sep=1pt](ae2) at (13, 1) {$e_2$};
\node[fill=green!10, draw=black, minimum size=0.65cm, circle, inner sep=1pt](ae3) at (14, 1) {$e_3$};
\node[fill=green!10, draw=black, minimum size=0.65cm, circle, inner sep=1pt](ae4) at (15, 1) {$e_4$};

\node[fill=green!10, draw=black, minimum size=0.65cm, circle, inner sep=1pt](av0) at (11, -1) {$v_0$};
\node[fill=green!10, draw=black, minimum size=0.65cm, circle, inner sep=1pt](av1) at (12, -1) {$v_1$};
\node[fill=green!10, draw=black, minimum size=0.65cm, circle, inner sep=1pt](av2) at (13, -1) {$v_2$};
\node[fill=green!10, draw=black, minimum size=0.65cm, circle, inner sep=1pt](av3) at (14, -1) {$v_3$};
\node[fill=green!10, draw=black, minimum size=0.65cm, circle, inner sep=1pt](av4) at (15, -1) {$v_4$};

\node[circle, inner sep=1pt, draw=black, fill=gray!20, minimum size=0.65cm] (top) at (13,2) {$\mathbf{1}$};
\node[circle, inner sep=1pt, draw=black, fill=gray!20, minimum size=0.65cm] (bottom) at (13,-2) {$\mathbf{0}$};

\foreach \i in {0,...,4} 
{\draw  (ae\i) to  (top);
 \draw  (av\i) to (bottom);
 }

\foreach \i in {0,...,4} \draw (ae\i) -- (av\i);
\foreach \i [evaluate=\i as \prev using \i-1] in {1,...,4} \draw (ae\i) -- (av\prev);

\draw [preaction={draw=white, -, line width=3pt}] (ae0) -- (av4);

\end{tikzpicture}}
  \caption
  {Defining ${}_2\circ_1: P(4) \sqcup P(3) \to P(5)$.}
  \end{figure}
  \end{center}
\noindent

\end{example}
\begin{example}
\label{AssLatOp}
As established in \cite{diagonal1}, Loday's realization of associahedra provides a polytopal model for the topological $A_\infty$-operad $\oA_\infty:=\{\oK^{n-2}\}_{n\geq 2}$ in the symmetric monoidal category $(\cPoly, \times)$. 
In particular, the partial compositions ${\circ_i: \oK^{n-2}\times \oK^{m-2} \to \oK^{n+m-3}}$ can be taken to be codimension-one subpolytope embeddings ensuring that the face lattice map $\L$ has the desired functorial properties.
The corresponding face lattice operad can be identified with the non-$\Sigma$ operad of reduced planar rooted trees with grafting as the operadic composition, together with a designated empty tree $\varnothing$ in each arity. Specifically, for each $n\geq 2$, we have $\L(\oK^{n-2})=PT(n)\cup \{\varnothing\}$, where $\oK^{n-2}$ is Loday's realization of the $(n-2)$-dimensional associahedron, $PT(n)$ is the set of all planar rooted trees with $n$ leaves and no vertices having less than two descendants. The partial order on $PT(n)$ is induced by the covering relation $t\prec t'$ iff a tree $t'$ can be obtained from $t$ by contracting an edge, and $\varnothing$ is set to be the smallest element with respect to this order.

The lattice structure on $PT(n)\cup \{\varnothing\}$ may be elucidated upon passing to its combinatorial equivalent, that being the lattice of subdivisions of a regular oriented $(n+1)$-gon $P_{n+1}$ with one marked edge. Recall that a subdivision $d$ of $P_{n+1}$ is a subset of the set of all diagonals of $P_{n+1}$ such that no two diagonals from $d$ intersect. Given two subdivisions $d$ and $d'$, the join $d\vee d'$ is defined to be $d\cap d'$, while the meet $d\wedge d'$ is $d\cup d'$ if $d\cap d'=\varnothing$ and $d\wedge d'=\varnothing$ otherwise. In the latter equality, $\varnothing$ denotes the designated smallest element of the lattice. 
 \begin{figure}[H]
  \centering
    \resizebox{0.7\textwidth}{!} {\def\penta{\draw[black, fill=green!10] (A) -- (B) -- (C) -- (D) -- (E) -- cycle;}


\newcommand{\slice}[1]{%
    \penta;
    \draw \foreach \x/\y in {#1} {(\x)--(\y)};     
}

\begin{tikzpicture}
    \coordinate (A) at (0,1);
    \coordinate (B) at (0.951057, 0.309017);
    \coordinate (C) at (0.587785, -0.809017);
    \coordinate (D) at (-0.587785, -0.809017);
    \coordinate (E) at (-0.951057, 0.309017);

\node (top) at (0,3.5) {\tikz{\penta}};

\matrix (P) [matrix of nodes, column sep=0.5cm,row sep=0.4cm]
{
    \tikz{\slice{B/D}}& \tikz{\slice{B/E}} & \tikz{\slice{A/D}}& \tikz{\slice{E/C}} & \tikz{\slice{A/C}} \\

    \tikz{\slice{B/D, B/E};}& \tikz{\slice{B/D, D/A}} & 
    \tikz{\slice{C/E, B/E}}& \tikz{\slice{D/A, C/A}} & \tikz{\slice{E/C, A/C}} \\
};

\foreach \j in {1,...,5} \draw [shorten <=-2pt, shorten >=-2pt] (top) to (P-1-\j);

\foreach \i/\j in {1/1, 1/2, 2/1, 2/3, 3/2, 3/4, 4/3, 4/5, 5/4, 5/5} 
\draw [preaction={draw=white, -,line width=6pt},shorten <=-2pt, shorten >=-3pt]  (P-1-\i) to (P-2-\j);

\node (bottom) at (0,-3.7) {\Huge$\varnothing$};

\foreach \j in {1,...,5} \draw [shorten <=-2pt, shorten >=-2pt] (bottom) to (P-2-\j);

\node (left) at (-9, 1.3) 
{\tikz{
    \slice{B/D};
    \node[circle,fill, blue] (center) at (0,0) {};
    \draw[very thick, blue] (center)--(126:1.1);
    \draw[very thick,blue] (center)--(54:1.1);
    \draw[very thick,blue] (center)--(198:1.1);
    \node[circle,fill, blue] (right) at (0.4, -0.5) {};
    \draw[very thick,blue] (center)--(right);
    \draw[very thick,blue] (right)--(-32:1.3);
    \draw[very thick,blue] (right)--(-80:1.3);
    \draw[very thick,red] (C)--(D);
}
};
\end{tikzpicture}}  
  \caption{The rooted tree - polygon subdivision correspondence and the lattice of subdivisions of a pentagon.}
  \end{figure}
Note that the atoms of the lattice $PT(n)$ correspond to planar binary rooted trees with $n$ leaves. Thus, $PT$ contains the operad $PBT$ discussed in example \ref{TamariOp} as a set-valued suboperad. Equivalently, this is the suboperad of the topological operad $\mathcal{A}_\infty$ consisting of the vertices of the associahedra. Clearly, $PBT$ is not a lattice suboperad of $PT$ due to non-triviality of the Tamari order.
Also, while $PT$ is free as a non-$\Sigma$ operad in $\Sets$ with a single generator in each arity, it is not free as a poset-valued operad due to a non-trivial order structure on each component $PT(n)$ for $n>2$.
\end{example}

Given a set-valued or a $\bbk$-linear operad $\oP$, let $|\oP(n)|$ denote the cardinality or, respectively, the dimension of the $n$-th component of $\oP$. Here, and in the following, we assume that all such quantities are finite. The \emph{generating}, or the \emph{Hilbert-Poincar{\'e}}, series of $\oP$ is the formal series $f_{\oP}(z):=\sum\limits_{n\geq 1}\frac{|\oP(n)|}{n!}z^n$ if $\oP$ is symmetric, and it is 
$f_{\oP}(z):=\sum\limits_{n\geq 1}|\oP(n)|z^n$ in the non-$\Sigma$ case. In case $\oP$ is a ranked lattice, or more generally, a poset-valued, operad, where each component $\oP(n)$ is ranked, these can be refined to bi-variate series, upon replacing each occurrence of $|\oP(n)|$ with the corresponding rank-generating polynomial.
Namely, if $\rho_{\oP(n)}:\oP(n)\to \mathbb{N}_0$ is the rank function of $\oP(n)$, then the corresponding rank-generating polynomial is $\sum\limits_{p\in\oP(n)}t^{\rho_{\oP(n)}(p)}=\sum\limits_{k\geq 0}c_{n,k} t^k$, where $c_{n,k}$ is the number of elements of $\oP(n)$ of rank $k$,
and we get
\[
f_{\oP}(z, t)=\sum\limits_{n\geq 1}\sum\limits_{k\geq 0}\frac{c_{n,k}}{n!}t^k z^n
\]
 for a symmetric operad $\oP$, while removing $n!$ in the above formula yields the bi-variate generating series for the non-$\Sigma$ case.
The lattice operads induced by an operads of polytopes are naturally ranked, with the rank generating polynomial in each arity being the $f$-polynomial of the corresponding polytope. In particular, we have
\[
f_{Comp}(z,t)=\sum\limits_{n=2}^{\infty}(1+t)^{n-1}z^n=\frac{z^2(1+t)}{(1-z-tz)}
\]
and
\[
f_{Comp_B}(z,t)=\sum\limits_{n=2}^{\infty}(2+t)^{n-1}z^n
=\frac{z^2(2+t)}{(1-2z-tz)}
\]
as the generating functions for the non-$\Sigma$ operads of compositions and type-B compositions respectively.

\section{Filtrations by lattice operads}
\label{filtrations}
The following is a generalization of the standard notion of a filtration of an associative algebra indexed by an ordered monoid. 
\begin{definition}
 Let $L$ be a lattice operad.
  An (increasing) \emph{$L$-filtration} 
  $F\oP =\{F_{p}\oP(n)\}_{p \in L(n), n \geq 1}$ of a $\bbk$-linear operad $\oP$
is a collection of $\bbk$-subspaces of $F_{p}\oP(n)\subset
\oP(n)$ indexed by the elements  $p\in L(n)$ for all $n
\geq 1$ subject to the following conditions:
\begin{itemize}
 \item[(i)] Monotonicity:
$F_{p'}\oP(n) \subset F_{p''}\oP(n)$
for all $p', p''\in L(n)$ such that $p'\leq p''$.
 \item[(ii)] Equivariance: 
$F_{p}\oP(n)\cdot \sigma = F_{p\cdot \sigma}\oP(n)$
for all $p\in L(n)$ and $\sigma\in \S_n$.
\item[(iii)] Compositional compatibility: 
$F_{a}\oP(m) \circ_i F_{b}\oP(n) \subset F_{a \circ_i
  b}\oP(m+n-1)$
for all $a\in L(m)$,  $b\in L(n)$ and $1 \leq i \leq m$. 
\item[(iv)] Unitality: if $L(1)$ has the smallest element $\mathbf{0}$ and $e\in\mathcal{P}(1)$
is the operadic unit, if it exists, then $\bbk\cdot e\subset F_\mathbf{0}\mathcal{P}(1)$.
\end{itemize}
\end{definition}
\begin{remark}
 There is an obvious dualization of the above definition covering the case of \textit{decreasing} filtrations. A typical use-case thereof is a filtration of an operad by the powers of an operadic ideal; a compelling example can be found in, for instance, \cite{dotsenkoprelie}.
\end{remark}

Some notions can be readily extrapolated from the realm of associative algebras to the operadic context. Specifically, an $L$-filtration $F\oP$ is said to be \emph{exhaustive} if 
$
\oP(n)=\bigcup\limits_{p\in L(n)} F_{p}\oP(n),
$
for all \ $n\geq 1$ and is said to be \emph{bounded below} if for any $n\geq 1$, there exists $b_n \in L(n)$ such that $F_{p}\oP(n) = 0$ for all $p \leq b_n$. Dually, an $L$-filtration $F\oP$ is called \emph{stabilized} if 
for any $n\geq 1$, there exists $c_n \in L(n)$ such that $F_p \oP=F_{p\wedge c_n}\oP$ for any $p \geq c_n$.

Additionally, we say that an $L$-filtration $F\oP$ is \emph{saturated} if 
\[
 F_{p'}\oP(n)\cap F_{p''}\oP(n) \subset F_{p'\wedge p''}\oP(n)
\]
for all $n\geq 1$, $p', p''\in L(n)$.
This property is vacuous for $\bbZ$-indexed filtrations and, more generally, for filtrations indexed by totally ordered monoids, of associative algebras, but it is non-trivial in the framework of filtrations by lattice operads.

The first few examples of filtrations by lattice operads are immediate:
\begin{example}
\label{FiltEx}
 \noindent
 \\
 \begin{enumerate}[(a)]
  \item 
  The ordinary notion of a $\bbZ$-indexed filtration of an associative algebra can be recovered from the above definition. Indeed, let $C\bbZ$ be the lattice operad of Example ~\ref{LatOpEx}\eqref{LZEx}.
  Then an (increasing) filtration $FA=\{F_n A\}_{n\in \mathbb{Z}}$ of an associative $\bbk$-algebra $A$ 
  can be identified with the corresponding $C\bbZ$-filtration of $A$, when the latter is regarded as an $\bbk$-linear operad concentrated in arity $1$.
  
  \item 
  \label{TrivFilt}
  For any lattice operad $L$ and any $\bbk$-linear operad $\oP$, there is a trivial exahaustive $L$-filtration $\mathbf{1}\mathcal{P}$ of $\mathcal{P}$ defined by $\mathbf{1}_p\mathcal{P}(n)=\mathcal{P}(n)$ for all $n\geq 1$ and $p\in L(n)$.
  
  \item For any $\bbk$-linear operad $\oP$ there is a \emph{tautological} filtration $\tau\oP$
  of $\oP$ by the lattice operad $Sub(\oP)$ as per Example \ref{LatOpEx}(\ref{SubLatOp}). 
  Namely, for all $n\geq 1$ and $U\in Sub(\oP)(n)$, we set $F_U\oP(n):=U$.
 \end{enumerate}
 \end{example}
  
  \begin{example}
  \label{GenCntEx}
  Let $E:=\{\mu_i|i\in I\}$ for some $I\neq \varnothing$ and $\Free(E)$ be the free $\bbk$-linear operad on the generators $E$. No conditions are imposed on the arities of $\mu_i$'s. 
  Given a tuple $p=(p_i)_{i\in I}\in \bbZ^{I}$ and $n\geq 1$, we define $F_p\Free(E)(n)$ to be the linear span of all partial-composition monomials $\nu$ in generators $E$, of arity $n$ and such that each $\mu_i$ appears as a factor in $\nu$ no more than $p_i$ times. In particular, $F_p\Free(E)(n)=0$ whenever $p_i\leq 0$ for all $i\in I$.
  Since $\Free(E)$ is free, it follows quite easily that the collection $\{F_p\Free(E)(n)\}_{n\geq 1, p\in \bbZ^I}$ is a bounded $C\bbZ^I$-filtration of $\Free(E)$, where $C\bbZ^I$ is the counting operad introduced in Example \ref{LatOpEx}\eqref{CIEx}. Here, each tuple $p\in C\bbZ^I(n)$ \textit{counts}, or rather gives an upper bound for, the number of occurrences of each of the generators $\mu_i$ in the monomial expressions appearing in $F_p\Free(E)(n)$.

  A certain modification of the above construction allows one to introduce in a similar way a $C\bbZ^I$-filtration for a general $\bbk$-linear operad $\oP$ upon choosing a presentation $\oP=\Free(E)/(\mathcal{R})$. This is to be addressed in Corollary \ref{StdFiltQuot} below. 
  \end{example}
  
  \begin{example}
  \label{DiffFilt}
  Let $A$ be a commutative associative $\bbk$-algebra $A$.
  A \textit{multilinear differential operator} is a linear map $\delta: A^{\otimes n} \to A$ such that $\delta=\delta(x_1,\dots, x_n)$ is a differential operator of order $\leq d_i$ in each variable $x_i$ for the fixed values of all the remaining arguments. A prototypical example to have in mind is that of a Poisson or, more generally, a Jacobi algebra $A$, whose bracket $\{-,-\}:A\otimes A\to A$ is a differential operator of order one in each variable. Any such operator $\delta$ is assigned a tuple $(d_1,\dots, d_n)\in \mathbb{Z}_{\geq 0}^n$.

  All multilinear differential operators on $A$ taken together form a suboperad ${\it Diff}_A$ of the endomorphism operad $\mathcal{E}nd(A)$, which carries a natural $\MZ$-filtration by the tuple of orders $(d_1,\dots, d_n)$, $n \geq 1$. In particular, the combinatorics of the lattice suboperad of $\MZ$ generated by the element $(1,1)\in \MZ(2)$ arises in connection to Poisson and Jacobi algebras \cite{superbig}.
   
\end{example}


Given a lattice operad $L$ and a $\bbk$-linear operad $P$, let ${\it Filt}(L, \mathcal{P})$
be the set of all $L$-filtrations of $\mathcal{P}$. This assignment is functorial contravariantly in $L$ and covariantly in $\oP$.
Indeed, first, we have the following
\begin{theorem}
\label{FiltFuncL}
 Let $\oP$ be a $\bbk$-linear operad and $f:K\to L$ be a lax morphism of lattice operads.
 Define $f^*: Filt(L, \oP) \to Filt(K, \oP)$ by setting
 \[
  (f^*(F\oP)_p\oP)(n):= F_{f(p)}\oP(n)
 \]
 for any $L$-filtration $F\oP$, all $n\geq 1$ and $p\in K(n)$. 
 Then 
 \begin{enumerate}[(a)]
  \item 
  $f^*(F\oP)\oP$ is a well-defined $K$-filtration.
  \item
  $(gf)^*=f^*g^*$ for any pair of lax morphisms $K\overset{f}{\to }L\overset{g}\to M$.
 \end{enumerate}
\end{theorem}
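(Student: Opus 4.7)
My plan is to verify the four filtration axioms for $f^*(F\oP)$ in turn, then handle functoriality by a direct computation. All four axioms reduce to elementary consequences of either the fact that each component $f_n$ of $f$ is a lattice homomorphism (hence monotonic and $\S_n$-equivariant) or the lax inequality \eqref{LaxMorph}; only axiom (iii) genuinely uses the lax condition.

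For axiom (i), if $p' \leq p''$ in $K(n)$, monotonicity of $f_n$ gives $f(p') \leq f(p'')$ in $L(n)$, and then monotonicity of $F\oP$ yields $F_{f(p')}\oP(n) \subseteq F_{f(p'')}\oP(n)$, which is exactly $f^*(F\oP)_{p'}\oP(n) \subseteq f^*(F\oP)_{p''}\oP(n)$. For axiom (ii), the $\S_n$-equivariance of $f_n$ gives $f(p\cdot \sigma) = f(p)\cdot \sigma$, and axiom (ii) for $F\oP$ then yields
\[
 f^*(F\oP)_p\oP(n)\cdot \sigma = F_{f(p)}\oP(n)\cdot \sigma = F_{f(p)\cdot \sigma}\oP(n) = F_{f(p\cdot\sigma)}\oP(n) = f^*(F\oP)_{p\cdot\sigma}\oP(n).
\]
For axiom (iv), observe that if $\mathbf{0}_K$ is the bottom of $K(1)$, then trivially $\mathbf{0}_L \leq f(\mathbf{0}_K)$ in $L(1)$; combining the unitality of $F\oP$ with monotonicity in the $L$-index gives $\bbk\cdot e \subseteq F_{\mathbf{0}_L}\oP(1)\subseteq F_{f(\mathbf{0}_K)}\oP(1) = f^*(F\oP)_{\mathbf{0}_K}\oP(1)$.

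The step that actually uses the hypothesis that $f$ is a \emph{lax} morphism, and not merely a family of equivariant lattice homomorphisms, is axiom (iii). Given $a\in K(m)$, $b\in K(n)$, and $1\leq i\leq m$, compositional compatibility for $F\oP$ gives $F_{f(a)}\oP(m) \circ_i F_{f(b)}\oP(n) \subseteq F_{f(a)\circ_i f(b)}\oP(m+n-1)$. The lax inequality $f(a)\circ_i f(b) \leq f(a\circ_i b)$ combined with monotonicity of $F\oP$ then upgrades this to
\[
 F_{f(a)}\oP(m) \circ_i F_{f(b)}\oP(n) \subseteq F_{f(a\circ_i b)}\oP(m+n-1),
\]
which is precisely the compositional compatibility required of $f^*(F\oP)$. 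This is the only obstacle of any substance, and it is exactly the inequality \eqref{LaxMorph} that was engineered for this purpose, which retroactively explains the definition of $\LatOper$.

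Part (b) is then immediate: for any $M$-filtration $F\oP$, $p\in K(n)$ and $n\geq 1$,
\[
 (gf)^*(F\oP)_p\oP(n) = F_{(gf)(p)}\oP(n) = F_{g(f(p))}\oP(n) = g^*(F\oP)_{f(p)}\oP(n) = f^*(g^*(F\oP))_p\oP(n),
\]
so $(gf)^* = f^*\circ g^*$ as maps of filtration sets. Since the composite $gf$ was already shown to be a lax morphism in the preceding lemma, the right-hand side is well-defined, and no additional verification is required.
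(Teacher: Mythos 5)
Your proof is correct and follows essentially the same route as the paper's: monotonicity of each $f_n$ gives axiom (i), the lax inequality \eqref{LaxMorph} combined with monotonicity of $F\oP$ upgrades compositional compatibility in axiom (iii), and part (b) is the same direct index computation. The only difference is that you spell out equivariance and unitality (the latter via $\zero_L \leq f(\zero_K)$), which the paper dismisses as straightforward.
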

\begin{proof}
 \begin{enumerate}[(a)]
  \item
  To simplify the notation, we denote $f^*(F\oP)\oP$ by $G\oP$.
  To check monotonicity of $G\oP$, let $p, q~\in~K(n)$ be such that $p\leq q$.
  Then $f(p) \leq f(q)$, since $f:K(n)\to L(n)$, being a lattice homomorphism, is monotonic. Now, by monotonicity of $F\oP$,
  \[
  G_p\oP(n)=F_{f(p)}\oP(n) \subset  F_{f(q)}\oP(n) = G_q\oP(n).
  \]
  To check the compositional compatibility, let $n\geq 1$, $1\leq i\leq m$ and $p\in K(m)$, $q\in K(n)$.
  Then
  \begin{align*}
   G_p\oP(m)\circ_i G_q\oP(n)&=
   F_{f(p)}\oP(m)\circ_i F_{f(q)}\oP(n)\\
   &\subset F_{f(p)\circ_i f(q)}\oP(n)\\
   &\subset 
   F_{f(p\circ_i q)}\oP(m+n-1)=G_{p\circ_i q}\oP(m+n-1)
  \end{align*}
  Here, the first inclusion follows from compositional compatibility for $F\oP$ and the second one follows by the defining property of a lax morphism \eqref{LaxMorph} and monotonicity 
  Equivariance and unitality of $G\oP$ are straightforward.
  \item
  Let $F\oP$ be a $M$-filtration of $\oP$, $n\geq 1$ and $p\in K(n)$. 
  Denote $g^*(F\oP)$ by $G\oP$. 
  Then $G_p\oP=F_{g(p)}\oP$ and, assuming arity $n$ everywhere, we have 
  \reqnomode
  \begin{align}
   ((gf)^*F\oP)_p\oP=F_{g(f(p))}\oP= (f^*G_p\oP)\oP=(f^*(g^*F_p\oP))\oP=((f^*g^*)F\oP)_p\oP
  \end{align}

 \end{enumerate} 
\end{proof}
\begin{corollary}
\label{totMZisZ}
By virtue of Example \ref{TotEx}, any $M\mathbb{Z}$-filtration of a $\bbk$-linear operad $\oP$ gives rise to a $C\bbZ$-filtration of $\oP$. 
\end{corollary}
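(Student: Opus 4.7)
The statement is essentially the content of Theorem \ref{FiltFuncL} applied to the specific lax morphism constructed in Remark \ref{TotEx}, so the plan is very short. The strategy is to invoke functoriality directly, producing the desired $C\bbZ$-filtration by pulling back along $\rho$.

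Concretely, I would first recall from Remark \ref{TotEx} that the diagonal map
\[
\rho_n : C\bbZ(n) = \bbZ \longrightarrow M\bbZ(n) = \bbZ^n, \qquad k \mapsto (k, k, \ldots, k),
\]
assembles into a lax morphism $\rho : C\bbZ \to M\bbZ$ of lattice operads, even though it fails to be a strict morphism. Next, given an $M\bbZ$-filtration $F\oP$ of a $\bbk$-linear operad $\oP$, I would define
\[
(\rho^\ast F\oP)_k \oP(n) \;:=\; F_{(k,k,\ldots,k)}\oP(n)
\]
for each $n \geq 1$ and $k \in C\bbZ(n) = \bbZ$. Theorem \ref{FiltFuncL}(a) then immediately guarantees that $\rho^\ast F\oP$ is a well-defined $C\bbZ$-filtration of $\oP$, since all four defining conditions (monotonicity, equivariance, compositional compatibility, unitality) are transported along a lax morphism by that theorem.

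There is no real obstacle: the only nontrivial verification is compositional compatibility, which is precisely where laxness of $\rho$ is used, and this has already been absorbed into the general statement of Theorem \ref{FiltFuncL}. One could optionally remark that strictness of $\rho$ is \emph{not} required — indeed, for $a, b \in C\bbZ(m)$, $C\bbZ(n)$ one only needs $\rho(a) \circ_i \rho(b) \leq \rho(a \circ_i b)$ componentwise, which in this case reads $(\underbrace{a+b,\ldots,a+b}_{m-1 \text{ entries on sides}})$ coordinatewise dominated by $(a+b, \ldots, a+b)$, trivially true. Hence the pullback construction $F\oP \mapsto \rho^\ast F\oP$ furnishes the desired natural map $Filt(M\bbZ, \oP) \to Filt(C\bbZ, \oP)$.
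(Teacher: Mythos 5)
Your proposal is exactly the paper's own (implicit) argument: the corollary is stated without proof as an immediate consequence of Remark \ref{TotEx} and Theorem \ref{FiltFuncL}, and pulling an $M\bbZ$-filtration back along the diagonal lax morphism $\rho$ via $f^*$ is precisely what is intended. One correction to your optional aside, though: the entries of $\rho(a)\circ_i\rho(b)$ outside the inserted block are $a$, not $a+b$ --- only the $n$ middle entries equal $a+b$, as the paper's own computation $(1,1)\circ_1(1,1)=(2,2,1)$ in Remark \ref{TotEx} shows --- so the laxness inequality $\rho(a)\circ_i\rho(b)\leq\rho(a\circ_i b)$ reduces to $a\leq a+b$ on the side entries, which holds when $b\geq 0$ rather than ``trivially.'' This caveat is inherited from Remark \ref{TotEx} itself and does not change the structure of your argument, which correctly defers the laxness of $\rho$ to that remark and everything else to Theorem \ref{FiltFuncL}.
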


\begin{theorem}
\label{FiltPush}
 Let $L$ be a lattice operad and $\phi: \oP\to\oQ$ be a morphism of $\bbk$-linear operads.
 Define $\phi_*:Filt(L, \oP)\to Filt(L, \oQ)$ by setting
 \[
  (\phi_*(F\oP)_p\oQ)(n):=\phi(F_p\oP(n))
 \]
 for any $L$-filtration $F\oP$, all $n\geq 1$ and $p\in L(n)$. 
 Then 
 \begin{enumerate}
  \item 
  $\phi_*(F\oP)\oQ$ is a well-defined $L$-filtration.
  \item
  $(\psi\phi)_*=\psi_*\phi_*$ 
  for any pair of $\bbk$-linear operad morphisms $\oP\overset{\phi}{\to}\oQ\overset{\psi}\to \mathcal{R}$.
 \end{enumerate}
 \end{theorem}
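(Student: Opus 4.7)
The plan is to verify the four filtration axioms for $\phi_*(F\oP)\oQ$ arity-by-arity, then derive functoriality by unwinding definitions. For brevity write $G\oQ := \phi_*(F\oP)\oQ$, so $G_p\oQ(n) = \phi(F_p\oP(n))$.

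First I would dispatch monotonicity and equivariance. Monotonicity: for $p \leq q$ in $L(n)$, we have $F_p\oP(n) \subset F_q\oP(n)$, hence $G_p\oQ(n) = \phi(F_p\oP(n)) \subset \phi(F_q\oP(n)) = G_q\oQ(n)$. Equivariance follows from $\phi$ being $\S_n$-equivariant in each arity: $G_p\oQ(n) \cdot \sigma = \phi(F_p\oP(n)) \cdot \sigma = \phi(F_p\oP(n) \cdot \sigma) = \phi(F_{p\cdot \sigma}\oP(n)) = G_{p\cdot \sigma}\oQ(n)$. Unitality likewise is immediate: if $e_\oP \in \oP(1)$ is the unit, then $\phi(e_\oP) = e_\oQ$, so $\bbk \cdot e_\oP \subset F_\mathbf{0}\oP(1)$ yields $\bbk \cdot e_\oQ \subset \phi(F_\mathbf{0}\oP(1)) = G_\mathbf{0}\oQ(1)$.

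The only step that requires any real care is compositional compatibility, and here the key point is to unpack how partial compositions act on subspaces (as in Example \ref{SubLatOp}) and interchange them with $\phi$. For $a \in L(m)$, $b \in L(n)$, and $1 \leq i \leq m$, the subspace $G_a\oQ(m) \circ_i G_b\oQ(n)$ is by definition the $\bbk$-span of elements of the form $\phi(v) \circ_i \phi(w)$ with $v \in F_a\oP(m)$ and $w \in F_b\oP(n)$. Since $\phi$ is a morphism of $\bbk$-linear operads, $\phi(v) \circ_i \phi(w) = \phi(v \circ_i w)$, so this span equals $\phi(F_a\oP(m) \circ_i F_b\oP(n))$. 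Applying compositional compatibility of $F\oP$ and monotonicity of $\phi$ on subspaces then gives
\[
G_a\oQ(m) \circ_i G_b\oQ(n) = \phi(F_a\oP(m) \circ_i F_b\oP(n)) \subset \phi(F_{a\circ_i b}\oP(m+n-1)) = G_{a\circ_i b}\oQ(m+n-1).
\]

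Functoriality in part (2) then reduces to a single identity: for any $L$-filtration $F\oP$ of $\oP$, arity $n$ and $p \in L(n)$,
\[
((\psi\phi)_*(F\oP))_p\mathcal{R}(n) = (\psi\phi)(F_p\oP(n)) = \psi(\phi(F_p\oP(n))) = \psi_*(\phi_*(F\oP))_p\mathcal{R}(n),
\]
with identity morphisms handled analogously. I do not anticipate any genuine obstacle; the proof is essentially a dual, and considerably easier, counterpart to that of Theorem \ref{FiltFuncL}, the simplification arising because $\phi$ is a strict operad morphism rather than a lax one, so the inclusion in the compositional compatibility step is inherited entirely from $F\oP$ rather than from any laxness defect.
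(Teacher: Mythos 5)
Your proof is correct and follows essentially the same route as the paper's: the key step in both is the identity $\phi(F_a\oP(m))\circ_i\phi(F_b\oP(n))=\phi\bigl(F_a\oP(m)\circ_i F_b\oP(n)\bigr)$, which you justify slightly more explicitly by unpacking the span, followed by the compositional compatibility of $F\oP$, with the remaining axioms and part (2) handled by the same routine unwinding.
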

 \begin{proof}
  \noindent
    First, to simplify the notation, we denote $\phi_*(F\oP)\oQ$ by $G\oQ$.
   To check monotonicity of $G\oQ$, let $p, q \in L(n)$ be such that $p \leq q$. Then by monotonicity of $F\oP$,
   \[
   G_p\oP = \phi(F_p\oP) \subset \phi(F_q\oP) = G_q\oP. 
   \]
   To check the compositional compatibility, let $n\geq 1$, $1\leq i\leq m$ and $p\in L(m)$, $q\in L(n)$.
   Then
   \begin{align*}
    G_p\oQ(m)\circ_i G_q\oQ(n)&=\phi(F_p\oP(m))\circ_i\phi(F_q\oP(n))
    = \phi(F_p\oP(m)\circ_i F_q\oP(n))\\
    &\subset \phi(F_{p\circ_i q}\oP(m+n-1))=G_{p\circ q}\oQ(m+n-1)  
   \end{align*}
   Equivariance follows from the definition of an operad morphism and the equivariance property of $F\oP$. Unitality is straightforward.
   \item
   To get the second statement, let $p\in L(n)$. Omitting arities for the sake of brevity, we have
   \[
   ((\psi\phi)_*(F\oP)_p)\oQ=
   (\psi\circ \phi)(F_p\oP)=\psi(\phi(F_p\oP))=\psi(\phi_*(F\oP)_p\oQ)=
   \psi_*(\phi_*(F\oP)_p\oQ).
   \]

 \end{proof}
 \begin{corollary}
 \label{StdFiltQuot}
 Let $\oP$ be a $\bbk$-linear operad defined in terms of a presentation $\oP=\Free(E)/(\mathcal{R})$ with respect to a generating set $E=\{\mu_i|i\in I\}$.
 Recall (cf. Example \ref{LatOpEx}\eqref{CIEx}) that $\Free(E)$ carries a canonical $C\bbZ^I$-filtration. Then it induces a natural $C\bbZ^I$-filtration on $\oP$ along the projection $\Free(E)\twoheadrightarrow \oP$.
 \end{corollary}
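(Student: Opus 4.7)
The plan is to derive this corollary as a direct application of Theorem \ref{FiltPush} in the special case where the operad morphism is the presenting surjection. By Example \ref{GenCntEx}, the free $\bbk$-linear operad $\Free(E)$ carries a canonical $C\bbZ^I$-filtration $F\Free(E)$, where for $p = (p_i)_{i\in I} \in C\bbZ^I(n)$ the subspace $F_p\Free(E)(n)$ is the linear span of all partial-composition monomials of arity $n$ in the generators $\mu_i$ such that each $\mu_i$ occurs at most $p_i$ times. The presentation $\oP=\Free(E)/(\mathcal{R})$ supplies a surjective morphism of $\bbk$-linear operads $\pi: \Free(E)\twoheadrightarrow\oP$.

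With these ingredients in place, Theorem \ref{FiltPush} (applied with $L=C\bbZ^I$, $\phi=\pi$, $\oQ=\oP$) produces the push-forward filtration $\pi_*(F\Free(E))$ of $\oP$, defined arity-wise by
\[
(\pi_*(F\Free(E)))_p\oP(n) := \pi\bigl(F_p\Free(E)(n)\bigr)
\]
for all $n\geq 1$ and $p\in C\bbZ^I(n)$. Theorem \ref{FiltPush}(1) guarantees that the four defining properties of an $L$-filtration (monotonicity, equivariance, compositional compatibility, and unitality) transfer automatically along $\pi$, and Theorem \ref{FiltPush}(2) ensures naturality with respect to further morphisms of presentations.

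I anticipate no real obstacle: the content of the corollary is essentially that the construction of Example \ref{GenCntEx} descends to quotients, and this descent is precisely what Theorem \ref{FiltPush} formalizes. The only conceptual point worth emphasizing is that an element $x\in \oP(n)$ lies in the $p$-th filtered piece whenever it admits \emph{some} lift to $\Free(E)(n)$ in which each $\mu_i$ appears at most $p_i$ times, even if other lifts of $x$ (differing by elements of the ideal $(\mathcal{R})$) violate this bound; this is automatic from the set-theoretic definition of $\pi(F_p\Free(E)(n))$ as an image. Consequently, no further argument beyond invoking Theorem \ref{FiltPush} is required.
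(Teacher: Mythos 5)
Your proposal is correct and matches the paper's intent exactly: the corollary is stated as an immediate consequence of Theorem \ref{FiltPush}, obtained by pushing forward the canonical $C\bbZ^I$-filtration of Example \ref{GenCntEx} along the presenting surjection $\Free(E)\twoheadrightarrow\oP$. Your additional remark about membership in the filtered piece requiring only \emph{some} admissible lift is a correct and worthwhile clarification, but no further argument is needed.
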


 \begin{proposition}
 \label{reprprop}
  Let $\oP$ be a $\bbk$-linear operad. 
  The functor $Filt(-,\oP): \LatOper^{op} \to \Sets$ is representable.
  In fact,
  \[
   Filt(L,\oP) \simeq {{\rm Hom}}_{\LatOper}(L, Sub(\oP))
  \]
  functorially in $L$.
  Here, $\LatOper$ is the category of lattice operads with lax morphisms and $Sub(\oP)$ 
  is the lattice operad of Example \ref{LatOpEx}\eqref{SubLatOp}.
 \end{proposition}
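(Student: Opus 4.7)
The plan is to exhibit explicit, mutually inverse natural maps between the two sets, both implemented at the level of underlying data. In the forward direction, given a lax morphism $f\colon L \to Sub(\oP)$, I would define the associated filtration by $F_p\oP(n) := f_n(p)$ for $p \in L(n)$. The four filtration axioms transcribe directly from the structure of a lax morphism: monotonicity (i) follows because each $f_n$, being a lattice homomorphism, is order-preserving; equivariance (ii) is the $\S_n$-equivariance built into Definition~\ref{LaxDef}; compositional compatibility (iii) is precisely the lax inequality $f_m(a)\circ_i f_n(b) \leq f_{m+n-1}(a\circ_i b)$ read in $Sub(\oP)(m+n-1)$, where the partial order is subspace inclusion; and unitality (iv) is recovered via the convention that $f_1$ respects the bottom element of $L(1)$ when one exists.

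In the reverse direction, given an $L$-filtration $F\oP$, I would set $\Psi(F\oP)_n(p) := F_p\oP(n)$. The $\S_n$-equivariance and the lax inequality are again immediate transcriptions of axioms (ii) and (iii). The two assignments are manifestly mutual inverses, since both act as the identity on the indexing data. Naturality in $L$ is then a formal verification: for a lax morphism $h\colon K \to L$, precomposition of lax morphisms $L \to Sub(\oP)$ with $h$ corresponds, under the bijection, to the pullback operation $h^*\colon Filt(L,\oP) \to Filt(K,\oP)$ constructed in Theorem~\ref{FiltFuncL}, both sending $F\oP$ to $p \mapsto F_{h(p)}\oP(n)$.

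The main obstacle is verifying that $\Psi(F\oP)_n\colon L(n) \to Sub(\oP)(n)$ is actually a lattice homomorphism, i.e.\ that $F_{p\wedge q}\oP(n) = F_p\oP(n) \cap F_q\oP(n)$ and $F_{p\vee q}\oP(n) = F_p\oP(n) + F_q\oP(n)$. Only the inclusions $F_{p\wedge q} \subseteq F_p \cap F_q$ and $F_p + F_q \subseteq F_{p\vee q}$ are automatic from monotonicity (i); the reverse inclusions are the genuine content. I expect this to be the technical heart of the argument and the cleanest way to handle it is to observe that the natural object classified by $\Hom_{\LatOper}(L, Sub(\oP))$ is in fact the sub-collection of saturated, join-preserving filtrations, and to either build those saturation clauses into the working definition of an $L$-filtration or, equivalently, to pass through a canonical closure procedure on $F\oP$ which replaces $F_p\oP(n)$ by $\bigcap_{q \geq p}(F_q\oP(n))$ for meets and by $\sum_{q \leq p}(F_q\oP(n))$ for joins, and then check that nothing has been lost. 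Once this point is settled, representability together with its naturality formally follows.
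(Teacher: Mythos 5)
Your construction of the two maps coincides with the paper's: the correspondence is the identity on the underlying data $p \mapsto F_p\oP(n)$, and your inverse formula $F_p\oP(n) := f_n(p)$ is exactly the paper's pullback $f^*\tau\oP$ of the tautological filtration along $f$. The paper disposes of the forward direction in one line, asserting that $p\mapsto F_p\oP$ is a lax morphism ``as follows from the definition of an $L$-filtration,'' and does not address the point you raise.

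That point is real, and it is precisely what the paper's one-line argument elides: Definition \ref{LaxDef} requires each component of a lax morphism to be a lattice \emph{homomorphism}, whereas the filtration axioms only yield monotonicity of $p\mapsto F_p\oP(n)$, hence only the one-sided inclusions $F_{p\wedge q}\oP\subseteq F_p\oP\cap F_q\oP$ and $F_p\oP+F_q\oP\subseteq F_{p\vee q}\oP$. The paper itself implicitly concedes that the meet half is not automatic by introducing saturation ($F_{p'}\oP\cap F_{p''}\oP\subseteq F_{p'\wedge p''}\oP$) as a non-trivial additional property of a filtration. However, your proposed repair does not close the gap in the statement as given: restricting to saturated, join-preserving filtrations replaces $Filt(L,\oP)$ by a proper subset, and the closure procedure you suggest is not injective, so ``check that nothing has been lost'' is exactly the step that fails. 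The proposition is recovered verbatim if one reads the components of a lax morphism as merely monotone maps of lattices rather than lattice homomorphisms --- note that the composition lemma for lax morphisms and the proof of Theorem \ref{FiltFuncL} only ever invoke monotonicity of the components --- and under that reading your two maps are mutually inverse and natural exactly as you describe. As written, though, your sketch either proves a different statement (for a restricted class of filtrations) or leaves the homomorphism requirement unverified.
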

 \begin{proof}
  In one direction, let $F\oP$ be an $L$-filtration of $\oP$. 
  Then $p\mapsto F_p\oP$ for all $p\in L$ defines a lax operad morphism $L\to Sub(\oP)$ as follows from the definition of an $L$-filtration.
  
  Conversely, given a lax operad morphism $f:L\to Sub(\oP)$,  
  we get an $L$-filtration of $\oP$ by considering the image of the tautological
  filtration $\tau\oP$ under
  \[
   f^*:Filt(Sub(\oP),\oP) \to Filt(L, \oP).
  \]
  provided by Theorem \ref{FiltFuncL}. 
  
  One readily verifies that these correspondences are indeed inverse to each other and are natural.
 \end{proof}
%

 \begin{lemma}
  For any lattice operads $K, L$, the set ${\rm Hom}_{\LatOper}(L,K)$ acquires a lattice structure by setting
  \[
   (f\wedge g)(p):= f(p)\wedge g(p)
  \]
  and
  \[
   (f\vee g)(p):= f(p)\vee g(p)
  \]
  for all $n\geq 1$, $p\in L(n)$.
 \end{lemma}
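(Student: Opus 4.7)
The plan is to verify in turn that (i) $f\wedge g$ and $f\vee g$ are again lax morphisms of lattice operads, and (ii) the pointwise partial order on ${\rm Hom}_{\LatOper}(L,K)$ makes these operations the genuine meet and join. Throughout, the pointwise order $f\leq g$ defined by $f(p)\leq g(p)$ for all $n\geq 1$ and $p\in L(n)$ is evidently reflexive, transitive, and antisymmetric.

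First I would check that $(f\wedge g)_n\colon L(n)\to K(n)$ is a lattice homomorphism for each $n\geq 1$. Given $p,q\in L(n)$, using that $f_n$ and $g_n$ are lattice homomorphisms and the associativity/commutativity of $\wedge$ in $K(n)$, one computes
\[
(f\wedge g)(p\wedge q)=f(p\wedge q)\wedge g(p\wedge q)=(f(p)\wedge f(q))\wedge(g(p)\wedge g(q))=(f\wedge g)(p)\wedge(f\wedge g)(q),
\]
and an analogous identity for joins. Equivariance with respect to $\S_n$-actions (when $L$ and $K$ are symmetric) follows immediately from the equivariance of $f$ and $g$ together with the $\S_n$-equivariance of the lattice operations on $K(n)$. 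The same verifications apply verbatim to $f\vee g$.

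The main step, and the part that really uses the lattice-operad hypothesis, is to check the lax condition \eqref{LaxMorph}. For $a\in L(m)$, $b\in L(n)$ and $1\leq i\leq m$, Lemma~\ref{MonoDist} (equation \eqref{LatOpDefMeet}) applied in $K$ gives
\[
(f\wedge g)(a)\circ_i(f\wedge g)(b)=(f(a)\wedge g(a))\circ_i(f(b)\wedge g(b))=(f(a)\circ_i f(b))\wedge(g(a)\circ_i g(b)).
\]
Since $f$ and $g$ are lax, $f(a)\circ_i f(b)\leq f(a\circ_i b)$ and $g(a)\circ_i g(b)\leq g(a\circ_i b)$, whence by monotonicity of $\wedge$ in $K(m+n-1)$ the right-hand side is $\leq f(a\circ_i b)\wedge g(a\circ_i b)=(f\wedge g)(a\circ_i b)$. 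The argument for $f\vee g$ is dual, using \eqref{LatOpDefJoin}: one rewrites $(f\vee g)(a)\circ_i(f\vee g)(b)$ as $(f(a)\circ_i f(b))\vee(g(a)\circ_i g(b))$ and then applies the laxness of $f$ and $g$ together with the monotonicity of $\vee$. This is the only place where the distributivity of partial compositions over $\wedge$ and $\vee$ is essential; without it, the pointwise meet or join would not a priori remain lax.

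Finally, I would observe that $f\wedge g$ is a lower bound for $\{f,g\}$ in the pointwise order (since $f(p)\wedge g(p)\leq f(p),g(p)$ for every $p$) and is the greatest such: any lax $h$ with $h\leq f$ and $h\leq g$ satisfies $h(p)\leq f(p)\wedge g(p)=(f\wedge g)(p)$. The dual statement establishes $f\vee g$ as the least upper bound. Combined with the earlier steps, this shows that the proposed operations equip ${\rm Hom}_{\LatOper}(L,K)$ with the structure of a lattice.
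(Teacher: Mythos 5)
Your handling of the lax condition is correct and is essentially the paper's own argument in streamlined form: where you apply the identity \eqref{LatOpDefMeet} in one stroke to obtain $(f(a)\wedge g(a))\circ_i(f(b)\wedge g(b))=(f(a)\circ_i f(b))\wedge(g(a)\circ_i g(b))$ and then invoke laxness of $f$ and $g$, the paper runs the same computation from the other end, inserting the cross terms $f(p)\circ_i g(q)$ and $g(p)\circ_i f(q)$ into the meet and applying the two one-sided distributive laws of Lemma~\ref{MonoDist} separately. Your closing verification that the pointwise operations compute greatest lower and least upper bounds is also fine (the paper instead just notes that the algebraic lattice axioms hold because they hold in each $K(n)$).

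The gap is in your first step. The assertion that $(f\wedge g)_n$ satisfies ``an analogous identity for joins'' is false in general: one only has
\[
(f\wedge g)(p\vee q)=(f(p)\vee f(q))\wedge(g(p)\vee g(q))\;\geq\;(f(p)\wedge g(p))\vee(f(q)\wedge g(q))=(f\wedge g)(p)\vee(f\wedge g)(q),
\]
and equality requires distributivity of $K(n)$. Concretely, if $K(n)$ contains a copy of the diamond $M_3$ with atoms $a,b,c$ and $L(n)$ contains incomparable $p,q$ with $p\vee q=\one$ and $p\wedge q=\zero$, then lattice homomorphisms with $f(p)=a$, $f(q)=b$, $g(p)=b$, $g(q)=c$ give $\one$ on the left and $\zero$ on the right. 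So the pointwise meet of two lattice homomorphisms need not preserve joins, and dually the pointwise join need not preserve meets; since Definition~\ref{LaxDef} requires a lax morphism to be an arity-wise lattice homomorphism, the closure of ${\rm Hom}_{\LatOper}(L,K)$ under the proposed operations is exactly the point that needs an argument. To be fair, the paper's proof silently skips this check as well (and the intended target $Sub(\oP)(n)$ in Proposition~\ref{reprprop} is modular but not distributive), so you have surfaced a real issue rather than merely mishandled one; but declaring the join case ``analogous'' is a positive false claim. The statement can be repaired either by assuming each $K(n)$ distributive or by weakening the homomorphism requirement in Definition~\ref{LaxDef} to monotone (or meet-/join-preserving) maps, and your proof of the lax inequality goes through verbatim under either fix.
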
 
 \begin{proof}
  First, we check that $f\wedge g$ is a well-defined lax lattice operad morphism.   
  To this end, let $m, n\geq 1$, $1\leq i\leq m$, $p\in L(m)$ and $q\in L(n)$.
  Then
  \begin{align*}
   (f\wedge g)(p\circ_i q) &=f(p\circ_i q)\wedge g(p\circ_i q)\\
   &\geq (f(p)\circ_i f(q))\wedge (g(p)\circ_i g(q)) \\   
   &\geq (f(p)\circ_i f(q))\wedge (g(p)\circ_i g(q)) \wedge (f(p)\circ_i g(q))\wedge (g(p)\circ_i f(q))\\
   &\geq [(f(p)\wedge g(p))\circ_i f(q)] \wedge [(f(p)\wedge g(p))\circ_i g(q)]\\
   &\geq (f(p)\wedge g(p))\circ_i (f(q)\wedge g(q))=(f\wedge g)(p)\circ_i(f\wedge g)(q)
  \end{align*}
  Here, the first comparison follows from the definition of a lax morphism applied to $f$ and $g$, 
  the second one is due to the natural order on the lattice $K(m+n-1)$,
  the third and the fourth one are an application of the distributive properties of Lemma \ref{MonoDist}.
  Equivariance of $f\wedge g$ is clear.
  
  A similar argument shows that $f\vee g$ is a lax operad morphism as well. 
  The lattice axioms for $\wedge$ and $\vee$ hold true, since they hold in each $K(n)$ for $n\geq 1$.
 \end{proof} 
 \begin{remark}
  If each component $K(n)$ of a lattice operad $K=\{K(n)\}_{n\geq 1}$ is a complete lattice, then so is ${\rm Hom}_{\LatOper}(L,K)$ for any lattice operad $L$.
 \end{remark}

 \begin{corollary}
 \label{FiltIsALat}
  $Filt(L,\oP)$ is a (complete) lattice with the trivial $L$-filtration $\mathbf{1}\oP$ as its largest element.  
 \end{corollary}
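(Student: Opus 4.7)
The plan is to derive this as an immediate consequence of the representability result in Proposition \ref{reprprop} combined with the preceding lemma and remark. First I would invoke the natural bijection
\[
Filt(L,\oP) \simeq {\rm Hom}_{\LatOper}(L, Sub(\oP))
\]
and transport the lattice structure constructed pointwise on the right-hand side in the preceding lemma to $Filt(L,\oP)$. Unwinding this transport, the meet and join of two $L$-filtrations $F\oP$ and $G\oP$ are given arity-wise and index-wise by $(F\wedge G)_p\oP(n) = F_p\oP(n)\cap G_p\oP(n)$ and $(F\vee G)_p\oP(n) = F_p\oP(n) + G_p\oP(n)$ respectively, and the fact that these define valid $L$-filtrations is precisely what the previous lemma established (translated across the isomorphism).

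For completeness I would appeal to the remark following that lemma: for each $n\geq 1$, the component $Sub(\oP)(n)$ of the lattice operad of subspaces of $\oP(n)$ is complete, with arbitrary meets given by intersection and arbitrary joins by sum of subspaces. Hence ${\rm Hom}_{\LatOper}(L, Sub(\oP))$ is a complete lattice, and therefore so is $Filt(L,\oP)$.

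It remains to identify the top element. The largest element of $Sub(\oP)(n)$ is $\oP(n)$ itself, so the constant lax morphism $L \to Sub(\oP)$ sending every $p\in L(n)$ to $\oP(n)$ is the largest element of ${\rm Hom}_{\LatOper}(L, Sub(\oP))$. Under the isomorphism of Proposition \ref{reprprop}, this corresponds exactly to the trivial $L$-filtration $\mathbf{1}\oP$ of Example \ref{FiltEx}\eqref{TrivFilt}, completing the proof.

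There is no serious obstacle here; the only point worth double-checking is that the pointwise operations on subspaces respect the filtration axioms, but this is covered by the earlier lemma on ${\rm Hom}_{\LatOper}(L, K)$ once specialized to $K = Sub(\oP)$.
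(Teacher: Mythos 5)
Your proposal is correct and follows essentially the same route as the paper: the paper likewise deduces the corollary from Proposition \ref{reprprop}, the lemma on ${\rm Hom}_{\LatOper}(L,K)$ together with the remark on completeness, and the completeness of each $Sub(\oP)(n)$, noting that the resulting operations are arity-wise intersections and $\bbk$-linear sums. Your additional identification of $\mathbf{1}\oP$ with the constant lax morphism $p\mapsto\oP(n)$ is a correct elaboration of a point the paper leaves implicit.
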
 
  Indeed, the statement follows from Proposition \ref{reprprop}, the above remark and the fact that $Sub(\oP)(n)$ is a complete lattice for each $n\geq 1$.
  Explicitly, the lattice structure on $Filt(L,\oP)$ corresponds to taking (arity-wise) intersections and $\bbk$-linear sums of $L$-filtrations of $\oP$.
  
 \section{Filtrations with special properties}
\label{extra}
 Given a lattice operad $L$ and a $\bbk$-linear operad $\oP$, we define 
 ${\it PreFilt}(L, \oP)$ to be the set of all collections $F\oP=\{F_p\oP\}_{p\in L}$ of $\bbk$-linear subspaces $F_p\oP\subset \oP(n)$
 for all $n\geq 1$ and $p\in L(n)$. For any $F\oP, G\oP\in {\it PreFilt}(L, \oP)$ we define $(F\cap G)\oP\in PreFilt(L, \oP)$ by setting
 \[
  (F\cap G)_p\oP:=F_p\oP\cap G_p\oP\subset \oP(n)
 \]
and define $(F+G)\oP\in PreFilt(L, \oP)$ by setting
 \[
  (F+G)_p\oP:=F_p\oP+G_p\oP\subset \oP(n)
 \]
 for all $n\geq 1$ and $p\in L(n)$. 
 One verifies that $PreFilt(L, \oP)$ is a (complete) lattice with respect to these operations,
 and $Filt(L, \oP)$ is its sublattice.
 
 Now, let $\mathcal{C}$ be a subset of $PreFilt(L, \oP)$ such that 
 \begin{align}
 \label{ClOpReq}
  &\mathbf{1}\oP\in \mathcal{C}\notag\\
  &F\oP\in PreFilt(L, \oP)\Rightarrow \bigcap\limits_{G\oP\in \mathcal{C}: F\oP\leq G\oP} G\oP\in \mathcal{C}
 \end{align}
 where $\mathbf{1}\oP$ is the trivial filtration as per Example~\ref{FiltEx}\eqref{TrivFilt}.
 
 Then, as it is the case for any complete lattice \cite{everett}, under these assumptions, $\mathcal{C}$ induces the \textit{closure operator} $\gamma_\mathcal{C}:PreFilt(L,\oP) \to \mathcal{C}$ on $PreFilt(L, \oP)$
 that sends $F\oP$ to $\bigcap\limits_{G\oP\in \mathcal{C}: F\oP\leq G\oP} G\oP$.
 The closure operator satisfies
 \begin{enumerate}
  \item $F\oP\leq \gamma_\mathcal{C}(F\oP)$ for any $F\oP\in PreFilt(L, \oP)$; 
  \item $\gamma_\mathcal{C}(\gamma_\mathcal{C}(F\oP))=\gamma_\mathcal{C}(F\oP)$ for any $F\oP\in PreFilt(L, \oP)$;   
  \item $F\oP\leq G\oP\Rightarrow \gamma_\mathcal{C}(F\oP)\leq \gamma_\mathcal{C}(G\oP)$ for any $F\oP, G\oP\in PreFilt(L, \oP)$.  
 \end{enumerate}
For a given lattice operad $L$ and a $\bbk$-linear operad $\oP$, the closure operator $\gamma_\mathcal{C}$ returns the smallest $L$-filtration of type $\mathcal{C}$ out of the inital data provided in the form of a $L$-indexed collection of subspaces of $\oP$. This is illustrated in the following examples.
\begin{example}
Let $L$ be a lattice operad. Then by Corollary \ref{FiltIsALat}, $\mathcal{C}=Filt(L, \oP)$ satisfies the conditions \eqref{ClOpReq} above. 
  The corresponding closure operator returns for a given collection $E\oP\in PreFilt(L, \oP)$  the smallest $L$-filtration $G_E\oP$ such that 
  $E_p\oP\subset G_p\oP$ for all $n\geq 1$, $p\in L(n)$. 
  In other words, one may think of $G_E\oP$ as of the $L$-filtration of $\oP$ \textit{generated} by $E$.
  
  As an application, this allows one to introduce a certain analogue of the standard filtration of an associative algebra with respect to a chosen set of generators \cite[Section 1.6.2]{mcconnell}.
  Namely, let $\oP$ be an operad with a generating collection $E=\{E(n)\}_{n\geq 1}$.  
  Then, specializing to $L=M\mathbb{Z}$ in the above construction, we set $E_p\oP(n):=E(n)$ if $p\geq (1,\dots,1)$ and $E_p\oP(n):=0$ otherwise for all $n\geq 1$.
  In the terminology of \cite{superbig}, the corresponding closure $E\oP\mapsto G_E\oP$ 
  is the \emph{prestandard multifiltration} of $\oP$ with respect to $E$.
  
  Similarly, by taking $L=C\bbZ$ and setting $E_p\oP(n):=E(n)$ if $p\geq 1$ and $E_p\oP(n):=0$ otherwise for all $n\geq 1$, we get 
  a filtration of $\oP$ by the total degrees of the partial-composition polynomials in $E$.   
  In fact, the latter $C\bbZ$-filtration of $\oP$ is the $\rho^*$-image of the
  former $M\bbZ$-filtration, cf. Example \ref{TotEx} and Corollary \ref{totMZisZ}.
\end{example}

\begin{example}
Let $L$ be a lattice operad such that each $L(n)$ for $n\geq 1$ is complete.
Then the set ${\mathcal{S}:=\{G\oP\in Filt(L, \oP)|G\oP\hbox{ is saturated}\}}$ satisfies conditions \eqref{ClOpReq} above.
We call the corresponding closure operator $F\oP\mapsto \overline{F\oP}$ the \emph{saturation} of~$F\oP$.
\end{example}

\begin{example}
\label{DFilt}
  Recall that an increasing $\bbZ$-indexed filtration $\{F_iB\}_{i\in\bbZ}$ of an associative non-commutative algebra $B$ is called a\textit{ $D$-filtration}
  if $F_iB=0$ for $i<0$ and $[F_i B, F_j B]\subset F_{i+j-1}B$ for all $i, j\geq 0$. Here, the bracket denotes the ordinary commutator bracket $[a, b]=a\cdot b-b\cdot a$. 
  
  An operadic enhancement of the commutator bracket can be introduced as follows \cite{superbig}.
  For ${a \in \oP(m)}$, $b \in \oP(n)$, $1\leq i\leq m$, $1 \leq j \leq n$, we set \begin{align*}
[a,b]_{ij} := (a \circ_i b) \cdot (\tau_{i-1,j-1} \times \id_{m+n-i-j+1})
- (-1)^{|a||b|} (b \circ_j a) \cdot(
\id_{i + j -1} \times \tau_{m-i,n-j}),  
\end{align*}
where $\tau_{i-1,j-1} \times \id_{m+n-i-j+1} \in \S_{m+n-1}$ is the
permutation that swaps the block consisting of the first $i-1$ entries with the subsequent block of 
$j-1$ entries and leaves the remaining entries intact. The action of 
${\id_{i + j -1}\times \tau_{m-j,n-i}}$ is defined in a similar way. The flow chart shown below illustrates the~idea.  
 \begin{center}
  \begin{figure}[H]
    \resizebox{0.6\textwidth}{!} {\begin{tikzpicture}
\node (LB1) at (0, 0) {};
\node (LA1) at (1, 0) {};
\node (LB2) at (2, 0) {};
\node (LB3) at (3, 0) {};
\node (LA3) at (4, 0) {};
\node (LA0) at (2, -3.5) {};

\node[circle, fill=cyan!20, draw=black] (LB) at (2, -1.5) {$b$};
\node[circle, fill=red!10, draw=black] (LA) at (2, -2.5) {$a$};

\draw[gray, thick] (LB1) to [out=-90] (LB);
\draw[gray, thick, preaction={draw=white, line width=4pt}] (LA1) to [out=-90, in=120] (LA);
\draw[gray, thick] (LB2) to [out=-90, in=90] (LB);
\draw[gray, thick] (LB3) to [out=-90, in=60] (LB);
\draw[gray, thick] (LA3) to [out=-90, in=60] (LA);
\draw[gray, thick] (LB) to (LA);
\draw[gray, thick, ->] (LA) to (LA0);

\node (LB1) at (6, 0) {};
\node (LA1) at (7, 0) {};
\node(LA2) at (8, 0) {};
\node (LB3) at (9, 0) {};
\node (LA3) at (10, 0) {};
\node (LB0) at (8, -3.5) {};

\node[circle, fill=cyan!20, draw=black, label=left:$(-1)^{|a|\cdot |b|}$] (LB) at (8, -2.5) {$b$};
\node[circle, fill=red!10, draw=black] (LA) at (8, -1.5) {$a$};

\draw[gray, thick] (LB1) to [out=-90] (LB);
\draw[gray, thick] (LA1) to [out=-90, in=120] (LA);
\draw[gray, thick] (LA2) to [out=-90, in=90] (LA);
\draw[gray, thick] (LA3) to [out=-90, in=60] (LA);
\draw[gray, thick,preaction={draw=white, line width=4pt}] (LB3) to [out=-90, in=60] (LB);
\draw[gray, thick] (LB) to (LA);
\draw[gray, thick, ->] (LB) to (LB0);

\node at (5,-1.5) {\Huge$-$};

\end{tikzpicture}}
  \caption
  {The operadic commutator $[a, b]_{2,2}$ of two ternary operations. }
  \end{figure}
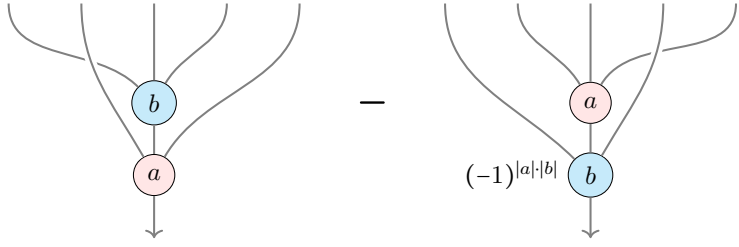
  \end{center}  
\noindent
Next, we define~
$
[-,-]_{ij} :\MZ(m)
\times \MZ(n) \to \MZ(m+n-1),\ 1\leq i \leq m,\ 1 \leq j \leq n,
$
by setting
\[
[\vec a , \vec b]_{ij} :=
(\vec b_L + a_i, \vec a_L + b_j,a_i + b_j -1,\vec b_R + a_i, \vec a_R+b_j),
\]
for $\vec a = (\vec a_L, a_i, \vec a_R) \in \MZ(m)$ and $\vec b = 
(\vec b_L, b_j, \vec b_R) \in \MZ(n)$. Here, an expression of the form $\vec p + q$ for a vector $\vec p=(p_1,\dots, p_k)$ and a scalar $q$ stands for $(p_1+q, \dots, p_k + q)$.

We say that a $\MZ$-filtration $F\oP$ of a $\bbk$-linear operad $\oP$
is a $D$-\textit{filtration} if
\begin{align}
\label{DBracket}
[F_{\vec p}\oP(m),F_{\vec q}\oP(n)]_{ij} \subseteq  F_{[\vec p, \vec q]_{ij}}\oP(m+n-1)
\end{align}
for all $\vec p \in \MZ(m)$,  $\vec q \in \MZ(n)$, $1 \leq i \leq
m$, $1 \leq j \leq n$. 
In arity one this agrees with the associative-algebraic case recalled above.
In the operadic context, the prototypical example of a $D$-filtration is the $\MZ$-filtration of the operad of multilinear differential operators outlined in example \ref{DiffFilt}.

For a given operad $\oP$, the set of all $D$-filtrations $\mathcal{D}\subset PreFilt(\MZ, \oP)$ satisfies the conditions of \eqref{ClOpReq} giving rise to the closure operator $F\oP\mapsto (F\oP)^D$.  Indeed, this follows upon noting that 
$$[\bigcap\limits_{\mathcal{D}} G_p\oP(m),
\bigcap\limits_{\mathcal{D}} G_q\oP(n)]_{ij}\subseteq 
\bigcap\limits_{\mathcal{D}}[G_p\oP(m), G_p\oP(n)]_{ij},$$
where the intersection is taken over all $G\oP\in PreFilt(\MZ, \oP)$ containing $F\oP$.
In the terminology of~\cite{superbig}, the \emph{standard $D$-filtration} associated to a generating collection $E$ of a $\bbk$-linear operad $\oP$ is obtained as the composite $\overline{(G_E\oP)^D}$ of the three closure operators defined above. The construction can be made explicit in the form of a certain iterative algorithm as done in op.cit.
\end{example}

\begin{remark}
As a non-example, the property of being a stabilized filtration does not arise from a closure operator. 
  Indeed, the collection $\mathcal{B}:=\{G\oP\in Filt(L, \oP)|G\oP\hbox{ is stabilized}\}$ does not, in general, satisfy~\eqref{ClOpReq}.
\end{remark}

\section{Gradings and associated graded operads}
 
\begin{definition}
\label{GrOpDef}
 Let $G$ be an operad in $\Sets$ and $\oQ$ be a $\bbk$-linear operad.
 We say that $\oQ$ is \emph{$G$-graded} if
 \begin{itemize}
  \item[(i)]
  for all $n\geq 1$, we have a decomposition $\oQ(n)=\bigoplus\limits_{p\in G(n)}\oQ(n)_p$; 
  \item[(ii)]
  for any $m, n\geq 1$, $1\leq i\leq m$, $p\in G(m)$ and $q\in G(n)$,
  \[
   \oQ(m)_p\circ_i\oQ(n)_q\subset \oQ(m+n-1)_{p\circ_i q}
  \]
  \item[(iii)]
  If both $G$ and $\oQ$ are symmetric operads, we require $\oQ(n)_p\cdot\sigma=\oQ(n)_{p\cdot\sigma}$  for all $n\geq 1$, $\sigma\in \S_n$ and $p\in G(n)$
 \end{itemize}
 \begin{example}
 Any $\mathbb{Z}$-graded associative algebra can be regarded as an operad concentrated in arity $1$ graded by the operad $C\bbZ$ of Example \ref{LatOpEx}\eqref{LZEx}, when we forget about its order structure.
 \end{example}

 \begin{example} 
   Let $V=V_0\oplus V_1$ be a $\mathbb{Z}_2$-graded $\bbk$-vector space.
   For each $n\geq 1$ and $(d_1,\dots,d_n)\in \mathbb{Z}_2^n$ let~ $\mathcal{E}_V(n)_{(d_1,\dots,d_n)}$ be the 
   space of all $\bbk$-linear mappings $f:V^{\otimes n}\to V$ such that $$v\mapsto f(u_1,\dots,u_{i-1}, v, u_{i+1},\dots, u_n)$$ is homogeneous of degree $d_i$ for all $1\leq i\leq n$ and any choice of homogeneous elements $u_j\in V$.
   Then $\mathcal{E}_V=\{\mathcal{E}_V(n)\}_{n\geq 1}$, where 
   $\mathcal{E}_V(n):=\bigoplus\limits_{(d_1,\dots,d_n)\in \mathbb{Z}_2^n}\mathcal{E}_V(n)_{(d_1,\dots,d_n)}$
   is a suboperad of the endomorphism operad $\mathcal{E}nd(V)$ graded by the word operad $\W\mathbb{Z}_2$.  
 \end{example} 
\end{definition}
 
We say that a lattice operad $L$ is \emph{strictly monotonic} if 
$q<q'$ implies $p\circ_i q<p\circ_i q'$ and $q\circ_j p<q'\circ_j p$  for any $m, n\geq 1$, 
$1\leq i\leq m$, $1\leq j\leq n$, $p\in L(m)$, $q, q'\in L(n)$.
\begin{example}
 Lattice operads $C\bbZ$ and $M\mathbb{Z}$ are strictly monotonic. 
 To produce an example of a non-strictly monotonic lattice operad, one can modify $C\bbZ$ by replacing $\bbZ$ with the totally ordered monoid $\overline{\bbZ}:=\bbZ\cup\{\infty\}$, where $\infty$ is the designated largest element such that $a+\infty=\infty$ for all $a\in \overline{\bbZ}$.
 \end{example}

 \begin{definition}
 Let $L$ be a strictly monotonic lattice operad and $\oP$ be a $\bbk$-linear operad with an $L$-filtration $F\oP$.
 For all $n\geq 1$ and $p\in L(n)$, we define
 \[
  gr \oP(n)_p := F_p\oP(n)/\sum\limits_{r\in L: r<p} F_{r}\oP(n),
 \]
 where the quotient is a quotient of vector spaces.
 \end{definition}
  Let $m, n \geq 1$,
  and $\overline{\alpha}\in gr\oP(m)_p$, $\overline{\beta}\in gr\oP(n)_q$ for some $p\in L(m)$, $q\in L(n)$.
  For $1\leq i\leq m$, we define $\overline{\alpha}\circ_i\overline{\beta}:=\overline{\alpha\circ_i\beta}$,
  where $\alpha$, $\beta$ are any representatives of the respective cosets. 
  We set $gr\oP(n):=\bigoplus\limits_{p\in L(n)} \oP(n)_p$ and extend the map $(-\circ_i-)$ onto the entire $gr\oP$ in both arguments by linearity.
 \begin{proposition}
 The family $gr \oP=\{\oP(n)\}_{n\geq 1}$ with the partial compositions introduced above is a well-defined $L$-graded $\bbk$-linear operad. We call it the \emph{associated graded of $\oP$ with respect to the filtration $F\oP$}.
 \end{proposition}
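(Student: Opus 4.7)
The proposal has three parts: establishing that the partial compositions on $gr\oP$ are well-defined on cosets, verifying the operad axioms, and confirming the $L$-grading property. The whole construction hinges on the strict monotonicity assumption, which is the essential ingredient that makes the lower-order error terms genuinely lower-order after composition.

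\textbf{Well-definedness.} Fix $p\in L(m)$, $q\in L(n)$, $1\leq i\leq m$, and pick $\alpha,\alpha'\in F_p\oP(m)$ and $\beta,\beta'\in F_q\oP(n)$ representing the same cosets, so that
\[
\alpha'-\alpha=\sum_k a_k,\qquad \beta'-\beta=\sum_\ell b_\ell,
\]
with $a_k\in F_{r_k}\oP(m)$ for some $r_k<p$ and $b_\ell\in F_{s_\ell}\oP(n)$ for some $s_\ell<q$. Using $\bbk$-bilinearity of $\circ_i$ in $\oP$, expand
\[
\alpha'\circ_i\beta'-\alpha\circ_i\beta = \sum_k a_k\circ_i\beta' + \sum_\ell \alpha\circ_i b_\ell.
\]
By compositional compatibility of $F\oP$, each $a_k\circ_i\beta'$ lies in $F_{r_k\circ_i q}\oP(m+n-1)$ and each $\alpha\circ_i b_\ell$ lies in $F_{p\circ_i s_\ell}\oP(m+n-1)$. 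Here is where I invoke strict monotonicity of $L$: since $r_k<p$ forces $r_k\circ_i q<p\circ_i q$, and $s_\ell<q$ forces $p\circ_i s_\ell<p\circ_i q$, the entire right-hand side lies in $\sum_{r<p\circ_i q}F_r\oP(m+n-1)$. Hence $\overline{\alpha'\circ_i\beta'}=\overline{\alpha\circ_i\beta}$ in $gr\oP(m+n-1)_{p\circ_i q}$. This also simultaneously verifies the grading condition (ii) in Definition~\ref{GrOpDef}.

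\textbf{Operad axioms and equivariance.} Associativity (both parallel and sequential) of the $\circ_i$ on $gr\oP$ follows immediately from the corresponding axioms in $\oP$ by choosing arbitrary representatives and reducing modulo the appropriate lower-index sums; the previous step ensures the outcome is independent of the choice. Equivariance of $F\oP$ gives $F_p\oP(n)\cdot\sigma=F_{p\cdot\sigma}\oP(n)$, which in particular sends $\sum_{r<p}F_r\oP(n)$ onto $\sum_{r<p\cdot\sigma}F_r\oP(n)$ (using that $L$ is a symmetric operad, so $\sigma$ acts as a lattice automorphism on $L(n)$ and thus preserves strict order), hence the $\S_n$-action descends and is compatible with the grading. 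For the unit, unitality of $F\oP$ places the operadic unit $e$ in $F_\zero\oP(1)$; since there is no $r<\zero$ in $L(1)$, the sum $\sum_{r<\zero}F_r\oP(1)$ is zero, so $\overline{e}\in gr\oP(1)_\zero$ represents $e$ itself and serves as the unit of $gr\oP$.

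\textbf{Main obstacle.} The only non-routine point is the first step, and its success rests precisely on strict monotonicity: without it, a strict inequality $r<p$ in $L(m)$ could collapse to an equality $r\circ_i q=p\circ_i q$, in which case the error term $a_k\circ_i\beta'$ would land in $F_{p\circ_i q}\oP(m+n-1)$ but not in its lower-index complement, and the composition would fail to descend to the quotient. All remaining verifications are bookkeeping that transplants the structure from $\oP$ to $gr\oP$ through the quotient maps.
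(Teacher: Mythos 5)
Your proof is correct and follows essentially the same route as the paper: bilinearity of $\circ_i$, compositional compatibility of the filtration, and strict monotonicity of $L$ to push the error terms into the strictly-lower-index sum, with associativity and equivariance inherited from $\oP$. The only differences are cosmetic — you spell out the decomposition of coset representatives as finite sums and add a short remark on the unit, which the paper leaves implicit.
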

 
 \begin{proof}  
  First, to check that $\circ_i$-compositions of $\oP$ descend properly to $gr\oP$, 
  let $\overline{\alpha}$, $\overline{\beta}$ be some homogeneous elements as above,  
  where $\alpha\in F_p\oP(m)$, $\beta\in F_q\oP(n)$.
  Then verifying that $\overline{\alpha}\circ_i\overline{\beta}:=\overline{\alpha\circ_i\beta}$ is well-defined amounts to checking that      
  \[
   \alpha\circ_i r_\beta + r_\alpha\circ_i \beta + r_\alpha\circ_i r_\beta \in \sum\limits_{r\in L: r<p\circ_i q} F_r\oP(n)
  \]
  for any $r_\alpha\in F_{p'}\oP(m)$, where $p'<p$, and any $r_\beta\in F_{q'}\oP(m)$, where $q'<q$.
  This follows from the monotonicity property of an $L$-filtration $F\oP$ and strict monotonicity of $L$.  Furthermore, since $\alpha\circ_i\beta\in F_{p\circ_i q}\oP(m+n-1)$, then condition (ii) of definition \ref{GrOpDef} holds for $gr\oP$.
  
  The parallel and the sequential associativity of $(-\circ_i-)$'s follow from $\overline{(\alpha\circ_i\beta)\circ_j \gamma}=(\overline{\alpha}\circ_i\overline{\beta})\circ_j \overline{\gamma}$
  and ${\overline{\alpha\circ_i(\beta \circ_j \gamma)}=\overline{\alpha}\circ_i(\overline{\beta}\circ_j \overline{\gamma})}$
  holding for any relevant combination of $i$ and $j$. 
  Finally, if both $L$ and $\oP$ are symmetric, then the equivariance property of the filtration $F\oP$ gives rise to condition (iii) of definition \ref{GrOpDef}.
 \end{proof}
  As an immediate observation we get the following
  \begin{lemma}
   Let $F\oP$ be a $\MZ$-filtration of a $\bbk$-linear operad $\oP$ such that it is a $D$-filtration in the sense of Example \ref{DFilt}.   
   Then all operadic commutators $[-,-]_{ij}$ in $gr \oP$ vanish.
  \end{lemma}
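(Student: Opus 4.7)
The plan is to unpack the commutator in $gr\oP$ at the level of representatives in $\oP$ and invoke the $D$-filtration hypothesis to force the vanishing. Fix homogeneous classes $\bar\alpha \in gr\oP(m)_{\vec p}$ and $\bar\beta \in gr\oP(n)_{\vec q}$, with lifts $\alpha\in F_{\vec p}\oP(m)$ and $\beta\in F_{\vec q}\oP(n)$. Since both the partial compositions and the $\S_n$-actions on $gr\oP$ descend from those on $\oP$ via the compositional compatibility and equivariance built into an $M\bbZ$-filtration, each summand of the commutator formula
\[
[\bar\alpha,\bar\beta]_{ij}=(\bar\alpha\circ_i\bar\beta)\cdot(\tau_{i-1,j-1}\times\id)-(-1)^{|\alpha||\beta|}(\bar\beta\circ_j\bar\alpha)\cdot(\id\times\tau_{m-i,n-j})
\]
is the class of the corresponding element of $\oP(m+n-1)$, namely $(\alpha\circ_i\beta)\cdot(\tau_{i-1,j-1}\times\id)$ or $(\beta\circ_j\alpha)\cdot(\id\times\tau_{m-i,n-j})$.

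Next, I would invoke the $D$-filtration hypothesis, which gives $[\alpha,\beta]_{ij}\in F_{[\vec p,\vec q]_{ij}}\oP(m+n-1)$. Writing out
\[
[\vec p,\vec q]_{ij}=(\vec q_L+p_i,\;\vec p_L+q_j,\;p_i+q_j-1,\;\vec q_R+p_i,\;\vec p_R+q_j)
\]
and comparing coordinatewise with the two composition multi-indices $(\vec p\circ_i\vec q)\cdot(\tau_{i-1,j-1}\times\id)$ and $(\vec q\circ_j\vec p)\cdot(\id\times\tau_{m-i,n-j})$, I would highlight that the central entry of $[\vec p,\vec q]_{ij}$ is strictly smaller than the central entries of both composition grades by exactly $1$. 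By strict monotonicity of $M\bbZ$ together with the monotonicity of the filtration, this reduction forces the image of $F_{[\vec p,\vec q]_{ij}}$ in the graded component indexed by either of the composition grades to vanish, so the class of $[\alpha,\beta]_{ij}$ is zero in each of the two homogeneous components where the summands of $[\bar\alpha,\bar\beta]_{ij}$ live, and hence in $gr\oP(m+n-1)$.

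The main obstacle is the permutation and grading bookkeeping: the two summands of $[\bar\alpha,\bar\beta]_{ij}$ naturally sit in \emph{different} homogeneous components of $gr\oP(m+n-1)$, since $(\vec p\circ_i\vec q)\cdot(\tau_{i-1,j-1}\times\id)$ and $(\vec q\circ_j\vec p)\cdot(\id\times\tau_{m-i,n-j})$ are distinct multi-indices in general (the other coordinates are shifted by $p_i$ or $q_j$ asymmetrically). The technical work is to use the direct-sum decomposition $gr\oP(m+n-1)=\bigoplus_{\vec r} gr\oP(m+n-1)_{\vec r}$ and the single strict inequality at the middle position to argue that the full element $[\bar\alpha,\bar\beta]_{ij}$ collapses to zero, rather than contributing non-trivially in each of the two components separately.
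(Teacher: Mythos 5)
Your setup is sound as far as it goes: you correctly identify the two summands of $[\bar\alpha,\bar\beta]_{ij}$, their grades $\vec r_1=(\vec p\circ_i\vec q)\cdot(\tau_{i-1,j-1}\times\id)$ and $\vec r_2=(\vec q\circ_j\vec p)\cdot(\id\times\tau_{m-i,n-j})$, and the drop by one in the middle coordinate of $[\vec p,\vec q]_{ij}$. But the step meant to finish the proof fails, and your closing paragraph in effect concedes that the actual content of the lemma (why ``the full element collapses to zero'') is still to be supplied. Two things go wrong. First, $[\vec p,\vec q]_{ij}$ is in general \emph{incomparable} with $\vec r_1$ and with $\vec r_2$ separately: for $m=2$, $n=1$, $\vec p=(1,0)$, $\vec q=(1)$ one gets $[\vec p,\vec q]_{11}=(1,1)$ against $\vec r_1=(2,0)$. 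So ``the image of $F_{[\vec p,\vec q]_{ij}}$ in the graded component indexed by either of the composition grades'' is not well posed; the single strict inequality sits only in the middle coordinate, while other coordinates of $[\vec p,\vec q]_{ij}$ can exceed those of $\vec r_1$ or $\vec r_2$. Second, and more seriously, vanishing of the class of the \emph{difference} $[\alpha,\beta]_{ij}$ somewhere does not give vanishing of each summand's class in its own component. For the order filtration on multilinear differential operators on $\bbk[t]$, take $\alpha(x,y)=x'y$ of order $(1,0)$ and $\beta(x)=x'$ of order $(1)$: then $(\alpha\circ_1\beta)(x,y)=x''y$ has a \emph{nonzero} class in $gr\oP(2)_{(2,0)}$, even though $[\alpha,\beta]_{11}=-x'y'$ lies in $F_{(1,1)}\oP(2)$. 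So the route ``each of the two homogeneous components vanishes separately'' is not merely unproved, it is false.

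The repair is to read the commutator as landing in a \emph{single} homogeneous component. By compositional compatibility, equivariance and monotonicity, both $(\alpha\circ_i\beta)\cdot(\tau_{i-1,j-1}\times\id)$ and $(\beta\circ_j\alpha)\cdot(\id\times\tau_{m-i,n-j})$ lie in $F_{\vec r_1\vee\vec r_2}\oP(m+n-1)$, so $[\alpha,\beta]_{ij}\in F_{\vec r_1\vee\vec r_2}\oP(m+n-1)$ and the commutator of the classes $\bar\alpha$, $\bar\beta$ is represented by the class of $[\alpha,\beta]_{ij}$ in $gr\oP(m+n-1)_{\vec r_1\vee\vec r_2}$. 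A coordinatewise comparison of $[\vec p,\vec q]_{ij}=(\vec q_L+p_i,\,\vec p_L+q_j,\,p_i+q_j-1,\,\vec q_R+p_i,\,\vec p_R+q_j)$ with $\vec r_1\vee\vec r_2$ gives $[\vec p,\vec q]_{ij}\leq\vec r_1\vee\vec r_2$ in every coordinate and a strict inequality in the middle one, hence $[\vec p,\vec q]_{ij}<\vec r_1\vee\vec r_2$. The $D$-condition then places $[\alpha,\beta]_{ij}$ in $F_{[\vec p,\vec q]_{ij}}\oP(m+n-1)\subseteq\sum_{\vec r<\vec r_1\vee\vec r_2}F_{\vec r}\oP(m+n-1)$, so its class vanishes. (The paper states the lemma without a written proof, as an immediate observation; the single-component reading above is the one under which it is indeed immediate, and it is exactly the point your argument leaves open.)
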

 
 \begin{example}
 Let $V$ be a vector space, $Sym(V)$ be symmetric algebra over $V$ and ${\it Diff}_{S(V)}$ be the operad of multilinear differential operators taken with a $M\bbZ$-filtration as in Example~\ref{DiffFilt}. Then $gr {\it Diff}_{S(V)}=\W S(V)$, where the induced $M\bbZ$-grading is determined by the vector of polynomial degrees $(deg(p_1),\dots,deg(p_n))$ for $(p_1,\dots,p_n)\in \W S(V)(n)$, where $1\leq i\leq n$, $n\geq 1$.
 \end{example}
 
 \begin{example}
 For $d\geq 1$, let $S\subset \mathbb{Z}^d$ be a finite non-empty set. The $M$-associative operad $\A S$ can be regarded as the operad of grid walks\footnote{A more common term is a \textit{lattice walk}, but we would like to avoid a clash of terminology.}, possibly with self-intersections, starting at the origin $\mathbf{0}\in \bbZ^d$ and with $S$ as the set of elementary steps. Namely, for $n\geq 1$, a word $(s_1,s_2,\dots,s_n)\in\A S(n+1)$ is interpreted as the walk of $n$ steps 
 passing through the points
 $$
 \mathbf{0},\,s_1,\,s_1+s_2,\dots, s_1+s_2+\dots+s_n.
 $$ 
 The partial composition $p \circ_i q$ amounts to inserting path $q$ into path $p$ at its $i$-th node. 
\begin{figure}[H]
{
 \resizebox{0.7\textwidth}{!} {\begin{tikzpicture}[>={Stealth}]
\tikzset{dot/.style={fill=black,circle}}
\draw[step=1cm,gray,very thin] (0,0) grid (3,3);
\node[dot] (A1) at (0,0) {};
\node[dot] (A2) at (1,0) {};
\node[dot, blue] (A3) at (1,1) {};
\node[dot] (A4) at (2,2) {};
\node[dot] (A5) at (3,3) {};
\draw[line width=0.2em] (A1) -- (A2) -- (A3) -- (A4) -- (A5);
\draw[line width=0.2em, ->] (A4) -- (A5);

\draw[step=1cm,gray,very thin] (5,0) grid (7,2);
\node[dot] (B1) at (5,0) {};
\node[dot] (B2) at (6,1) {};
\node[dot] (B3) at (6,2) {};
\node[dot] (B4) at (7,2) {};    
\draw[line width=0.2em] (B1) -- (B2) -- (B3) -- (B4);
\draw[line width=0.2em, ->] (B3) -- (B4);

\draw[step=1cm,gray,very thin] (10,-1) grid (15,4);
\node[dot] (C1) at (10,-1) {};
\node[dot] (C2) at (11,-1) {};
\node[dot, blue] (C3) at (11,0) {};

\node[dot,blue] (C4) at (12,1) {};
\node[dot,blue] (C5) at (12,2) {};
\node[dot,blue] (C55) at (13,2) {};
\node[dot] (C6) at (14,3) {};
\node[dot] (C7) at (15,4) {};
\draw[line width=0.2em] 
(C1) -- (C2) -- (C3);

\draw[line width=0.2em] 
 (C55) -- (C6) -- (C7);
\draw[line width=0.2em, ->] (C6) -- (C7);

\draw[line width=0.3em, blue] 
 (C3) -- (C4) -- (C5) -- (C55);

\node at (4,1.5) {$\circ_3$};

\node at (8.5,1.5) {$=$};
\end{tikzpicture}}
}
\caption{A partial composition in the operad $\A S$ of Delannoy walks; ${S=\{(1,0),(0,1), (1,1)\}}$.}
\end{figure}
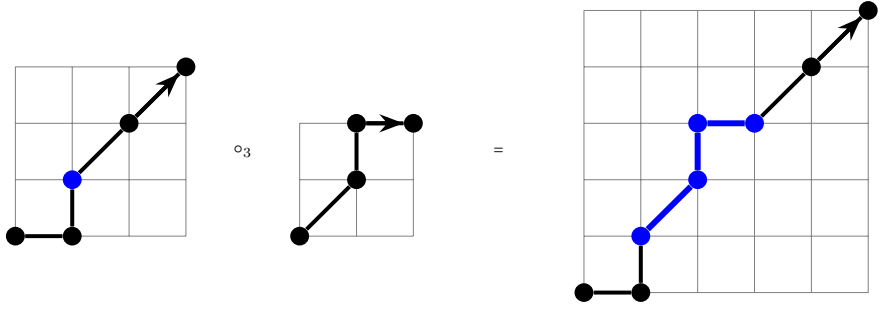
We define a $C\mathbb{Z}^d$-filtration on the $\bbk$-linearization ${Path_S}$ of $\A S$ as follows. 
For $p=(p_1,\dots,p_d)\in C\mathbb{Z}^d(n)$, the component $F_p{Path_S}(n)$ is the linear span of all the walks with $n$ steps from $S$, confined to the region $x_i\leq p_i,\quad i=1,2\dots d$. If we further assume that for any elementary step $s=(t_1,t_2,\dots, t_d)\in S$ we have $t_i\geq 0$, $i=1,2,\dots, d$, then the associated graded $gr Path_S$ can be regarded as the operad of the (linear combinations of) end-points of walks. Its homogeneous components with respect to the corresponding $C\bbZ^k$-grading are given by
\[
(gr Path_S(n))_p=\bbk \langle w\,|\, w\text{ is a walk of }n\text{ steps ending at }p\rangle. 
\]
For instance, in the particular case of $S=\{(1,0),(0,1)\}$ (the \emph{NE walks}), we have 
$$\text{dim}(gr Path_S(n)_{(p_1,p_2)})=
\begin{cases}
{n \choose p_1},\quad &p_1+p_2=n\\
0,\quad &\text{otherwise}
\end{cases}.$$

\end{example}

By virtue of bijections between certain families of combinatorial objects and grid walks, the operad $\A S$, for different choices of $S$, serves as a container of various combinatorial operads. For example, the operad of integer partitions $Part$ presented in example \ref{YoungLatEx} can be identified with the suboperad of $\A S$ for $S = \{(1,0), (0,1)\}$ generated by a countable family of left-turning hook-shaped walks. Specifically, this well-known correspondence is established by drawing a Young diagram in the first quadrant of the $xy$-plane, as shown below, and tracing its boundary by going from the bottom-left to the top-right corner of the diagram in the counterclockwise direction. 
The composition of partitions \eqref{PartComp}, presented graphically by insertions of Young diagrams, translates to path insertions.
 For each $a, b\geq 1$, the hook-shaped walk $\text{J}_{a,b}$, consisting of $a$ right-moving horizontal unit steps followed by $b$ upward-moving vertical unit steps, serves as a generator of $Part$ in arity $a+b+1$. 
 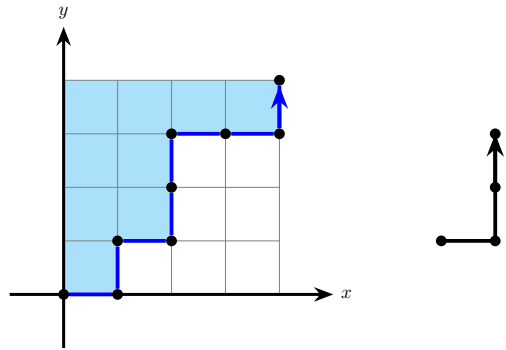
\begin{figure}[H]
{
 \resizebox{0.4\textwidth}{!} {\begin{tikzpicture}[>={Stealth}]
\tikzset{dot/.style={fill=black,circle,inner sep=0pt,minimum size=2mm}}

\fill[cyan!30] (0,0) rectangle (1,4);
\fill[cyan!30] (1,1) rectangle (2,4);
\fill[cyan!30] (2,3) rectangle (4,4);

\draw[step=1cm,gray,very thin] (0,0) grid (4,4);
\node[dot] (A1) at (0,0) {};
\node[dot] (A2) at (1,0) {};
\node[dot] (A3) at (1,1) {};
\node[dot] (A4) at (2,1) {};
\node[dot] (A5) at (2,2) {};
\node[dot] (A6) at (2,3) {};
\node[dot] (A7) at (3,3) {};
\node[dot] (A8) at (4,3) {};
\node[dot] (A9) at (4,4) {};

\draw[->,ultra thick] (-1,0)--(5,0) node[right]{$x$};
\draw[->,ultra thick] (0,-1)--(0,5) node[above]{$y$};

\draw[line width=2pt,draw=blue] \foreach \x [remember=\x as \lastx (initially 1)] in {2,...,9}{(A\lastx) -- (A\x)};
\draw[line width=2pt, ->,draw=blue] (A8) -- (A9);

\draw[line width=2pt] (7,1) node[dot] {} -- (8,1) node[dot] {} -- (8,2) node[dot] {} -- (8,3) node[dot] {};
\draw[line width=2pt, draw=black, ->] (8,2) -- (8,3);

\end{tikzpicture}}
}
\caption{The grid walk for partition (4,2,2,1) and a hook-shaped walk $\text{J}_{1,2}$.}
\end{figure}
\noindent
The generators satisfy 
 \[
  \text{J}_{a,b}\circ_1\text{J}_{c,d}=
  \text{J}_{a+i-1,b}\circ_i\text{J}_{c-i+1,d}
 \]
 for all $1\leq i\leq c$, and
 \[
  \text{J}_{a,b+i-1}\circ_{a+b+1}\text{J}_{c,d-i+1}=
  \text{J}_{c+j-1,d}\circ_{j}\text{J}_{a-j+1,b}
 \]
 for all $1\leq i\leq d$, $1\leq j\leq a$. 
 
As a special case, the suboperad of $\A S$ generated by all equilateral hook-shaped walks $J_{a,a}$, or equivalently, the suboperad of $\A S$ consisting of all NE walks with $2n$ steps from $(0,0)$ to $(n,n)$ that do not rise above the line $y=x$, can be viewed as an operad of binary trees, owing to a well-known bijection between Dyck paths of length $2n$ and planar rooted binary trees with $n+1$ leaves. This operad is not isomorphic to the operad $PBT$ considered in example \ref{TamariOp}. More generally, for a cone $C$ in $\bbZ^d$ and a set of elementary steps $S\subset \bbZ^d$, the walks staying within $C$ form a suboperad of $\A S$.

\subsection*{Acknowledgements}
The author is grateful to Anton Khoroshkin and Guillaume Laplante-Anfossi for helpful communications, and to Martin Markl for numerous discussions, helpful suggestions and support. The work is supported by RVO:67985840 of the Institute of Mathematics of the Czech Academy of Sciences and the Praemium Academiae grant of M. Markl. 

\printbibliography
\addresseshere

\newpage
\pagestyle{plain}
\section*{Appendix}
In what follows, $\mathcal{N}_p$ denotes the suboperad (in $\Lat$) of the non-$\Sigma$ version of $\MZ$ generated by a single element $p\in \MZ(n)$.

\begin{tabular}{p{0.4\textwidth} p{0.6\textwidth}}
  \vspace{40pt} \includegraphics[width=0.36\textwidth]{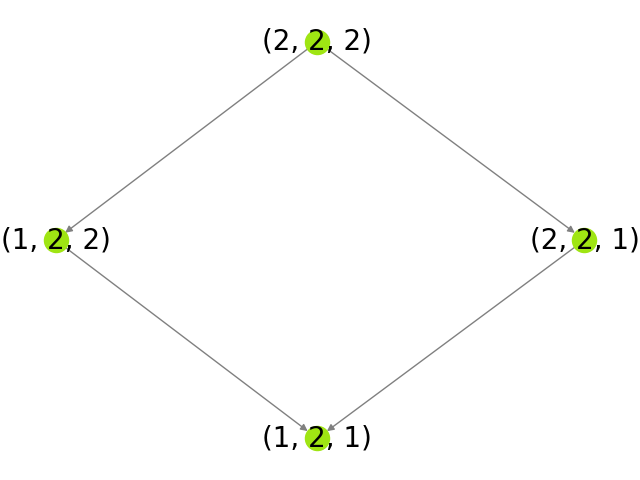} &
  \vspace{0pt} \includegraphics[width=0.6\textwidth]{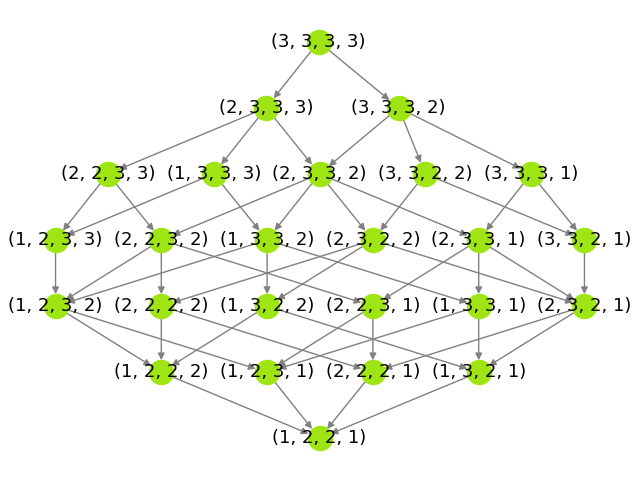}
\end{tabular}
\begin{center}
\textsc{Figure 3.} 
$\mathcal{N}_{(1,1)}(3)$ and $\mathcal{N}_{(1,1)}(4)$
\end{center}
\begin{tabular}{p{0.4\textwidth} p{0.6\textwidth}}
  \vspace{40pt} \includegraphics[width=0.36\textwidth]{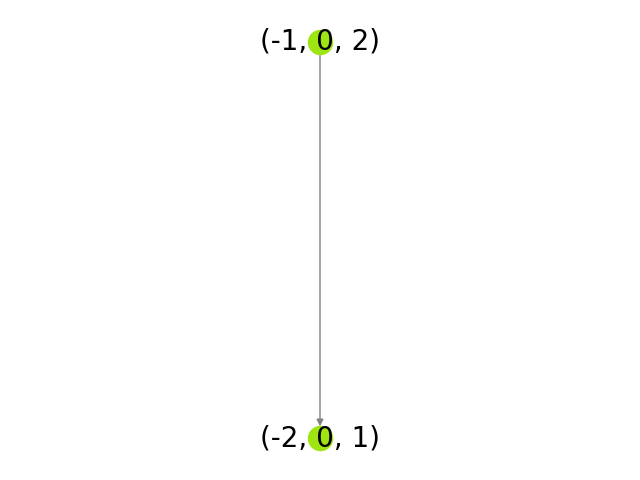} &
  \vspace{0pt} \includegraphics[width=0.5\textwidth]{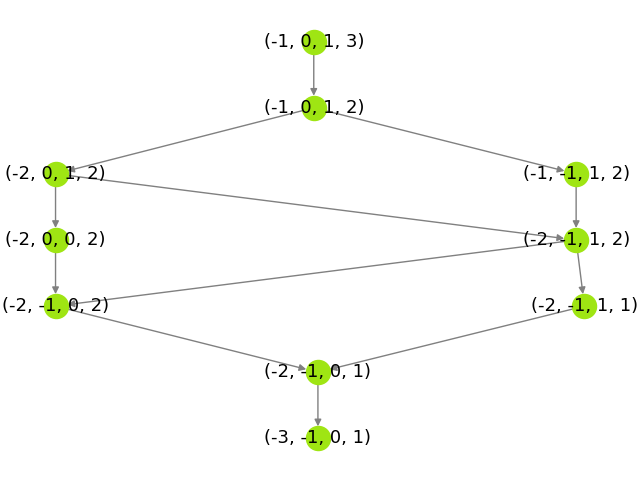}
\end{tabular}
\begin{center}
\textsc{Figure 4.} 
$\mathcal{N}_{(-1,1)}(3)$ and $\mathcal{N}_{(-1,1)}(4)$
\end{center}
\vspace{0.2in}
\begin{center}
\includegraphics[height=1.8in]{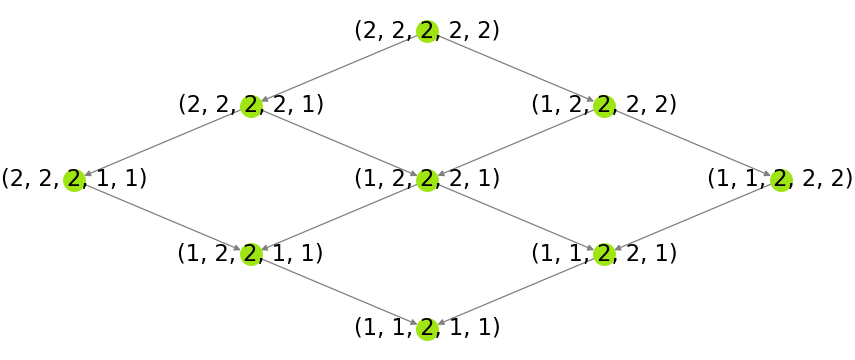}\\
\textsc{Figure 5.} 
$\mathcal{N}_{(1,1,1)}(5)$
\end{center}

\newpage
Let $\mathcal{M}_{(1,1)}$ be the suboperad of the symmetric lattice operad $\MZ$ generated by $(1,1)\in \MZ(2)$.
\begin{center}
\includegraphics[height=1.8in]{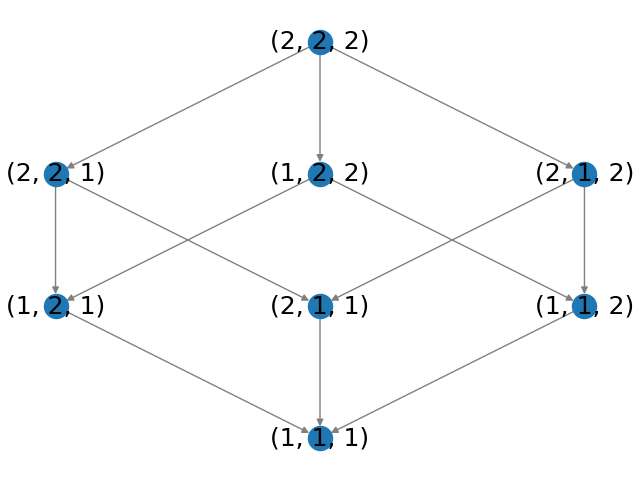}\\
\textsc{Figure 6.} 
$\mathcal{M}_{(1,1)}(3)$
\end{center}
\vspace{0.5in}
A computer-assisted calculation indicates that $|\mathcal{M}_{(1,1)}{(n)}|=(n-1)^n$
for $2 \leq n\leq 6$ (OEIS: A065440).
\end{document}